\theoremstyle{plain}
\newtheorem{theorem}{Theorem}[section]
\newtheorem{lemma}[theorem]{Lemma}
\newtheorem{assumption}[theorem]{Assumption}
\newtheorem{proposition}[theorem]{Proposition}
\newtheorem{corollary}[theorem]{Corollary}
\theoremstyle{remark}
\newtheorem{remark}[theorem]{Remark}
\newtheorem{example}[theorem]{Example}
\journal{Mathematics and Computers in Simulation
}
\begin{document}

\begin{frontmatter}
\title{Sensitivity Analysis for Mean-Field SDEs with Jumps by Malliavin Calculus: Chaos Expansion Approach}


\author{Samaneh Sojudi and Mahdieh Tahmasebi}
\address{Department of Applied Mathematics, Faculty of Mathematical Sciences, Tarbiat Modares University, P.O. Box 14115-134, Tehran,
Iran} 

\begin{abstract}
In this paper, we describe an explicit extension formula in sensitivity analysis regarding the Malliavin weight for jump-diffusion
mean-field stochastic differential equations whose local Lipschitz drift coefficients are influenced by the product of the solution and its law. We state that these extended equations have unique Malliavin differentiable solutions in the Wiener-Poisson space and establish the sensitivity analysis of path-dependent discontinuous payoff functions. It will be realized after finding a relation between the stochastic flow of the solutions and their derivatives. The Malliavin derivatives are defined in a chaos approach that the chain rule is not held. The convergence of the Euler method to approximate Delta Greek is proved. The simulation experiment 
illustrates our results to compute the Delta, in the context of financial mathematics, and demonstrates that the Malliavin Monte-Carlo computations applied in our formula
are more efficient than using the finite difference method directly.
\end{abstract}


\begin{keyword}
                                    

Mean-field SDEs, Malliavin calculus, Delta, Jump diffusion processes
\end{keyword}

\end{frontmatter}


\section{Introduction}  

\noindent In 1999, Fourni\'e et al. \cite{fournie1999applications} introduced the Greeks criteria using Malliavin calculus on the Wiener space. They demonstrated that the convergence rate of their method is more efficient than the finite difference method, particularly for discontinuous payoff functions. This innovative technique has since been applied by various researchers across different models. Here are some examples of what has been done. In 2004, Youssef Al-Khatib and Nickolas Perfila in \cite{el2004computations} extended this approach to calculate the Greeks in financial markets modeled by discontinuous processes with Poisson jump times and random jump sizes. Similarly, Davis and Johanson \cite{davis2006malliavin} computed these quantities for jump-diffusion processes satisfying separability conditions. Bavouzet-Morel and Messaoud \cite{bavouzet2006computation} analyzed the sensitivity of European and Asian options in jump-diffusion models using Malliavin calculus, showing that the variance of the Brownian-jump Malliavin Monte Carlo method is significantly smaller than the standard Brownian Monte Carlo method.  

\noindent Subsequent works have further advanced the use of Malliavin calculus in jump-diffusion models. For example, Forster et al. \cite{forster2009absolutely} investigated Greeks on spaces generated by independent Wiener and Poisson processes under specific conditions on the jump part in finite and infinite dimensions. Each of these approaches has been only investigated and covered by a subgroup of Lévy processes until Petrou 
\cite{petrou2008malliavin}
 considered a general class of  Lévy processes. 
 Petrou's approach is based on the fact that every Lévy process can be decomposed into an integral of a Wiener process and an integral with respect to a Poisson random measure. With the chaos expansion in hand, he introduced derivatives and their dual operators for processes in the Wiener-Poisson space. By his generalization, he can bypass the separability condition and the chain rule needed in the sensitivity analysis and apply integration by parts formula. In 
\cite{benth2011robustness},
 Benth et, al. introduced an innovative approach. Benth states that the Malliavin calculus can be used in cases without continuous martingale components in jump-diffusion dynamics. He explained his idea in such a way that small jumps can be approximated by a continuous martingale with appropriate variance. He also states that the Delta obtained based on this approximation is close to the real Delta. This method was later extended by Khedher \cite{khedher2012computation} to multidimensional diffusion Lévy processes and applied it to Delta calculation for stochastic models with volatility. In 2023, Hude and Rosendorf \cite{hudde2023european} calculated Greeks for exponential models with vanishing Brownian components, demonstrating that the Malliavin Monte Carlo method yields a significantly lower variance than traditional methods.

\noindent Given the efficiency and effectiveness of Malliavin calculus, it has gained significant attention in financial mathematics  (see for instance, \cite{yilmaz2018computation}, \cite{benth2021sensitivity}, \cite{fan2022bismut}, \cite{coffie2023sensitivity}). Recently, this subject has been explored specifically for mean-field stochastic differential equations (SDEs).  

\noindent  The mean-field SDEs, or McKean-Vlasov SDEs or distribution-dependent SDEs, were first studied by Kac
\cite{kac1956foundations}
in 1956 and it was followed by McKean 
\cite{mckean1966class}
in 1966 on stochastic systems with a large number of interacting particles and then by Graham 
\cite{graham1992mckean}
in 1992 on McKean-Vlasov Itô-Skorohod equations and nonlinear diffusions with discrete jump sets. 
Mckean-Vlasov SDEs allow us to describe the collective behavior of a large number of particles in statistical physics and analysis the large-scale social interactions, especially within the framework of mean-field games and financial derivatives pricing, see e.g.
(\cite{carmona2018probabilistic}, \cite{bush2011stochastic}, \cite{hambly2019spde}, \cite{dawson1995stochastic})
and references therein.

\noindent In this manuscript, we undertake a rigorous sensitivity analysis of mean-field models through the sophisticated methodology of Malliavin calculus. Over recent years, notable advancements have been made in this area. For instance, in 2018, David Ba\~nos \cite{banos2018bismut} investigated the Bismut formula for multi-dimensional mean-field SDEs with multiplicative Gaussian noise, emphasizing smooth drift and volatility coefficients. Similarly, the implementation of the Bismut formula for mean-field SDEs under additive noise was detailed in
\cite{bao2021bismut}.
Song and Wang 
\cite{song2022regularity}, 
examined the regularity properties of multidimensional distribution-dependent SDEs driven by a pure jump process, deriving a Bismut-type derivative formula based on stochastic gradient and Malliavin derivative operators (as outlined in Chapter 11 of \cite{nualart2018introduction}), which adheres to the chain rule. Moreover, an extension of the Bismut-Elworthy-Li formula for semi-linear mean-field SDEs driven by fractional Brownian motion was presented in
\cite{tahmasebi2022bismut}, with applications demonstrated in the pricing of variance swaps and other derivatives.\\
\noindent To illustrate the application of mean-field SDEs with jumps, especially, we can reference \cite{agram2024optimal} and \cite{agram2024deep}. In \cite{agram2024optimal}, the authors explore the optimal stopping problem for conditional McKean-Vlasov jump diffusions and illustrate their results to find the optimal time to sell a property like a house modeled in this type of SDEs. In \cite{agram2024deep}, the authors concentrate on optimizing the control of mean-field jump diffusions by implementing of deep learning methodologies. This study examines multi-particle jump-diffusion dynamics in the linear quadratic conditional mean-field (LQCMF) framework. They introduce a deep learning algorithm that integrates neural networks with path signatures to estimate the conditional expectations.  As a practical scenario, including LQCMF, interbank systemic risk (borrowing/lending) were discussed in the numerical outcomes.\\
\noindent The dependence of the coefficients on both the solution itself and its probabilistic law inherently introduces significant complexities into the analysis of these equations. These challenges are further compounded by the assumptions and conditions imposed on the coefficients, such as smoothness, boundedness, or Lipschitz continuity, which are often required to ensure the well-posedness and regularity of the solutions. Such constraints not only complicate the mathematical treatment of the equations but also limit the generality of the models, making the analysis highly intricate and technically demanding. Li and Hui in \cite{li2016weak}  explore stochastic differential equations defined by a drift coefficient \(b(s, X_{t \wedge s}, Q_{X_s})\) with globally Lipschitz condition in 1-Wasserstein distance. 
For locally Lipschitz conditions, the existing literature remains relatively limited. To the best of our knowledge, the first local condition was introduced in the pioneering and elegant work of Carmona and Delarue (2015).
In \cite{bauer2018strong}, Bauer et al. investigate the existence and uniqueness of strong solutions for mean-field SDEs characterized by irregular drift coefficients. For the case of a globally Lipschitz continuous diffusion coefficient and a one-sided Lipschitz continuous drift coefficient, Wang (2018) demonstrated the well-posedness of strong solutions. Further advancing the discourse on mean-field SDEs, Mishura and Veretennikov \cite{mishura2020existence} analyze special mean-field SDEs as the form
\[  
X_t = X_0 + \int_0^t \int_{\mathbb{R}^d} b(s, X_s, y) P_{X_s}(dy) ds + \int_0^t \int_{\mathbb{R}^d} \sigma(s, X_s, y) P_{X_s}(dy) dB_s, \quad t \geq 0,  
\] \\
\noindent where coefficient $b$ satisfies the assumption that no regularity of the drift is required for weak and strong uniqueness, relying on the analysis of the total variation metric.\\
\noindent Erny in \cite{erny2022well} established the well-posedness and propagation of chaos for McKean-Vlasov equations with jumps and locally Lipschitz coefficients. The analysis in their work relies on a Picard iteration scheme and truncation arguments, addressing technical challenges arising from the locally Lipschitz nature of the coefficients. Notably, their results hold under a growth condition of the form  
\begin{align} \label{intro1} 
    |b(t, x, m)| \leq C(1 + |x|),  
\end{align}  
where \( C > 0 \), \( x \in \mathbb{R} \), and \( m \in \mathcal{P}_1(\mathbb{R}) \), ensuring the drift coefficient remains well-behaved for large values of \( x \). This growth condition plays a crucial role in ensuring the strong well-posedness of the equations and we are interested in overcoming this limitation in our work by some weaker condition on the drift.
Additionally, Liu et al. \cite{liu2023onsager}, in 2023, proved the well-posedness of McKean-Vlasov SDEs with the drift coefficient being locally Lipschitz and the diffusion coefficient being Hölder continuous with respect to the state variable.
In \cite{ning2024one}, different from many works, Ning et al. consider the existence and the uniqueness of solution under much weaker conditions on the coefficients, with $b$ being locally Lipschitz and $\sigma$ being locally Hölder continuous in the state variable
\begin{align}  \label{intro2}
    |b(\omega, s, x, \mu)& - b(\omega, s, x', \mu')| \notag\\   
    &\leq \Big(C \ln(e + |x| + |x'|) + \mu(|\cdot|) + \mu'(|\cdot|)\Big) \Big[|x - x'| + W_1(\mu, \mu')\Big],  
\end{align}
for any $x, x' \in \mathbb{R}$. The Wasserstein space of order $p$, for $p \geq 1$, is defined as  
\begin{align*}  
    \mathcal{P}_p(\mathbb{R}) := \left\{ \nu \in \mathcal{P}(\mathbb{R}) : \int_{\mathbb{R}} |x|^p \nu(dx) < \infty \right\},  
\end{align*}  
and the $p$-th order Wasserstein distance, as
\begin{align*}  
    W_p(\nu, \nu') := \inf_{\pi \in \mathfrak{L}(\nu, \nu')}   
    \left( \int_{\mathbb{R} \times \mathbb{R}} |x - y|^p d\pi(x, y) \right)^{\frac{1}{p}}, 
\end{align*}
where $\mathfrak{L}(\nu, \nu')$ denotes the set of all couplings of $\nu$ and $\nu'$. 
\noindent Jianyu Chen and colleagues, in \cite{chen2024correspondence}, investigated the indirect approximation of the most probable transition pathway for stochastic interacting particle systems and their mean field limit (McKean-Vlasov SDEs). Their approach utilizes the Onsager-Machlup action functional, reformulated as an optimal control problem through the stochastic Pontryagin’s Maximum Principle. In that article, the authors assumed $b : \mathbb{R}^d \times \mathbb{R}^d \to \mathbb{R}^d$ has a decomposition such that $b(t, X_t, y) = h(t, X_t)y$ holds for some function $h(t, X_t)$. In this case, the drift coefficient is the form $h(t, X_t) \mathbb{E}(X_t)$ and satisfies the conditions $\bold{H1- H3}$ in \cite{liu2023onsager} in 2-Wasserstein distance as $h$ is a bounded function. Meanwhile, most stochastic dynamic models do not satisfy these common assumptions, such as $\int_{\mathbb{R}}^{}{xy\mu(dy)}$, and we are interest in considering the latter one, first with the simple linear case of $h$. In our work, we aim to generalize these conditions when the drift coefficient naturally takes the form $h(t, X_t)k(t, \mathbb{E}(X_t))$ when $h$ is not bounded and $k$ is a Lipschitz function and to streamline the concept we assume that $h(t, X_t)=X_t$. To establish the existence and uniqueness of solutions for such equations, we initially focus on this linear form, motivated by \cite{buckdahn2017mean}, we prove the existence and uniqueness of the solution with bounded $p$-moments, $p \geq 2$. We also prove an extension of the Bismut formula under the dynamics of jump-diffusion mean-field SDEs via Malliavin derivatives, as defined in Chapter 10 of \cite{nualart2018introduction}, utilizing chaos expansion, which does not satisfy the chain rule. Furthermore, we demonstrate this sensitivity analysis, particularly, in financial applications like computing the Delta, and we show that it achieves lower variance compared to the finite difference approach for discontinuous payoff functions.\\
\noindent Numerically, recent research has significantly advanced the convergence of Itô-Taylor and Euler schemes for mean-field SDEs, enhancing their applicability in finance and other fields. In \cite{sun2017ito}, numerical methods for mean-field SDEs were explored, extending the fundamental strong convergence theorem and providing numerical examples to validate theoretical results. Building on this foundation, \cite{sun2020explicit}, developed an explicit second-order scheme for decoupled mean-field forward-backward SDEs, demonstrating stability and rigorous error estimates that confirm its second-order accuracy when combined with the weak-order 2.0 Itô-Taylor scheme. Further, \cite{sun2021numerical}, focused on mean-field SDEs with jumps, proposing strong and weak Itô-Taylor schemes with established convergence rates. In \cite{liu2023tamed}, the authors establish the strong convergence of the tamed Euler-Maruyama approximation for McKean–Vlasov SDEs with super-linear drift and Hölder diffusion coefficients. The goal of the paper in \cite{bao2022approximations}, is to approximate two kinds of McKean–Vlasov stochastic differential equations with irregular coefficients via weakly interacting particle systems. The analysis of Euler-Maruyama schemes for approximating McKean-Vlasov SDEs was conducted in \cite{li2023strong}, where strong convergence was shown under weaker local Lipschitz conditions, enhancing the understanding of solution existence and uniqueness. Additionally, \cite{ning2024one}, investigated McKean-Vlasov equations, including stochastic variational inequalities and coupled forward-backward forms, establishing well-posedness and uniqueness of strong solutions under locally Hölder continuous stochastic coefficients.
\noindent In the application section of this article, we demonstrate the convergence of the Euler scheme for mean-field SDEs with jumps, under specific assumptions regarding the drift coefficients. While investigating the convergence of the Euler scheme for Delta by introducing innovative approaches, particularly for barrier options, we build upon the findings in \cite{higham2005convergence}, demonstrating that an Euler–Maruyama discretization yields accurate moment approximations and strong convergence of path-dependent options such as up-and-out barrier options.\\
\noindent The paper is organized as follows: \\
In section 2, we first present a few basic facts about Malliavin calculus for jump processes based on chaos expansion. These facts are used to derive the main results of the paper. In section 3, we introduce mean-field SDEs with the jump and prove the existence and uniqueness of their solutions and some proper important relations for the next sections. The existence of the stochastic flow and its representation by a Skorokhod integrable process are established in section 4. We also find the relation of the flow to the Malliavin derivative in some semi-linear cases. The Malliavin differentiability of path-dependent pay-offs in the form of barrier options is also included in this section. We will show the applications of our results in finding Malliavin weights to compute the Delta in mean-field SDEs with the jump for vanilla options and barrier lookback options in section 5. Section 6 is devoted to proving the convergence of the Euler method to approximate Delta. Finally, we illustrate our results in section 7. To verify efficiency, we will compare our results with the center finite difference method.

\section{Preliminaries}
\noindent In this section we recall some concepts of Malliavin derivatives and Skorohod integral for the jump process based on chaos expansion. More details can be found in the exhaustive references
\cite{nualart2018introduction}, \cite{nunno2008malliavin}.
\\
Suppose 
$\mathit{(\Omega, \mathcal{F}, \mathit{P})}$ is a complete probability space, and $\mathcal{B}({\mathbb{R}}_{0})$ is a $\sigma$-algebra derived from all Borel subspaces of ${\mathbb{R}}_{0}=\mathbb{R}-\{0\}$. The compensated jump measure $\tilde{N}$ is defined by
\begin{equation*}
\tilde{N}\mathit{(dt, dz)} := \mathit{N(dt , dz)} - \mu(\mathit{dz})\mathit{dt},
\end{equation*}
where $\mathit{N(dt, dz)}$ is a Poisson random measure and $\mu$ is the Lévy measure, which is finite measure on $\mathcal{B}({\mathbb{R}}_{0})$.
Let ${\mathit{L}}^2 ((\lambda \times \mu)^{\mathit{n}}) = {\mathit{L}}^2 (([0, \mathit{T}] \times {{\mathbb{R}}_{0}})^{\mathit{n}}), n=1, 2, ..., $ be the space of deterministic real functions $\mathit{f}$ such that
\begin{equation*}
{\|{\mathit{f}}\|}_{{\mathit{L}}^{2} ((\lambda \times \mu)^{\mathit{n}})}= \Big( {\int_{([0, \mathit{T}] \times {{\mathbb{R}}_{0}})^{\mathit{n}}} ^ {}{{\mathit{f}}^{2} (t_1 , z_1 , ... , t_n , z_n)}\, {d{t_1} \mu(d{z_1}) ... d{t_n} \mu(d{z_n})}} \Big)^{\frac{1}{2}} < \infty,
\end{equation*} 
where $\lambda$ denotes the Lebesgue measure on $[0, \mathit{T}]$. Denote the space of all symmetric functions in ${\mathit{L}}^2 ((\lambda \times \mu) ^n)$ by ${\tilde{L}}^2 ((\lambda \times \mu)^n)$ and for every $\mathit{f} \in {\tilde{L}}^2 ((\lambda \times \mu)^n)$, define
\begin{equation*}
I_{n}(f) := \int_{([0, \mathit{T}]\times {\mathbb{R}}_0)^n}^{}{\mathit{f}(t_1 , z_1 , ... , t_n ,z_n) {\tilde{N}(d{t_1}, d{z_1})} ...  {\tilde{N}(d{t_n}, d{z_n})}},
\end{equation*}
where $\mathit{I_n (f)}$ is the n-fold iterated integral of $\mathit{f}$. For any $\mathit{g} \in {\tilde{L}}^2 ((\lambda \times \mu)^m)$ and $\mathit{f}  \in  {\tilde{L}}^2 ((\lambda \times \mu)^n)$, we have the following relations 
\begin{equation*}
\mathbb{E}\big({\mathit{I}}_m (g) {\mathit{I}}_n (f)\big) = 
\begin{cases}
 0,  {\hspace{28mm} }{n \ne m} \\ (g, f)_{{\mathit{L}}^2 ((\lambda \times \mu)^n)},{\hspace{4mm}}  {n = m} 
\end{cases}
\end{equation*}
where $\mathit{m,n} = 1, 2, ...$ and 
\begin{align*}
(g, f)_{{\mathit{L}}^2 ((\lambda \times \mu)^n)} := \int_{([0, T] \times {\mathbb{R}}_{0})^n}^{} & {g(t_1 , z_1 , ... , t_n , z_n ) f(t_1 , z_1, ... , t_n , z_n)} \notag \\
& d{t_1} \mu (d{z_1}) ... d{t_n} \mu (d{z_n}).\notag
\end{align*}
The following theorem formulates the chaos expansion of some square integrable variables with respect to the Poisson random measure.
\begin{theorem} [\cite{nunno2008malliavin} Theorem 10.2]
Let 
$\mathit{F} \in {{\mathit{L}}^2}(\mathit{P})$ 
be a 
${\mathcal{F}}_T$-measurable random variable. Then 
$\mathit{F}$ 
admits the representation
\begin{equation*}
\sum_{n=0}^{\infty} {\mathit{I}}_{n} ({\mathit{f}}_n),
\end{equation*}
via a unique sequence of elements ${{\mathit{f}}_n \in {\tilde{L}}^2 ((\lambda \times \mu)^n), n = 1, 2, ... .}$ Here we set ${{\mathit{I}}_0}({{\mathit{f}}_0}) := {\mathit{f}}_0$ for the constant values ${\mathit{f}}_0 \in {{\mathbb{R}}_0}$. Moreover, we have 
\begin{equation*}
{{\| {\mathit{F}}\|}^2 _{L^2 (\mathit{P})}} = \sum_{n=0}^{\infty} {n!}{{{\| {\mathit{f}}_n\|}^2 _{L^2 ((\lambda \times \mu)^n)}}} .
\end{equation*}
\end{theorem}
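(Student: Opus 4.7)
The plan is to follow the classical Itô chaos decomposition argument adapted to the pure-jump setting, splitting the proof into an orthogonality/isometry step, a density (totality) step, and a synthesis step producing both the expansion and the Parseval identity.

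First I would establish the isometry relation for the iterated integrals. By a stopping-time argument one can interpret $I_n(f)$ for $f \in \tilde L^2((\lambda \times \mu)^n)$ as an $n$-fold Itô integral against $\tilde N$ on the simplex $\{0 < t_1 < \cdots < t_n < T\}$ multiplied by $n!$, and then use the Itô isometry for the compensated Poisson integral iteratively to obtain
\begin{equation*}
\mathbb{E}[I_n(f)I_m(g)] = \delta_{nm}\, n!\, (f,g)_{L^2((\lambda\times\mu)^n)}.
\end{equation*}
This gives orthogonality across different chaos orders and the target Parseval identity term-by-term; uniqueness of the coefficients $f_n$ is then immediate by projecting on the chaoses.

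Second, and this is the heart of the proof, I would show that $\bigoplus_{n\ge 0}\mathcal{H}_n$ (with $\mathcal{H}_n$ the $n$-th chaos) is dense in $L^2(\Omega,\mathcal{F}_T,P)$. The standard device is to introduce the Doléans-Dade exponentials
\begin{equation*}
\mathcal{E}(h) = \exp\Bigl(\int_0^T\!\!\int_{\mathbb{R}_0} h(t,z)\,\tilde N(dt,dz) - \int_0^T\!\!\int_{\mathbb{R}_0}\bigl(e^{h(t,z)}-1-h(t,z)\bigr)\,\mu(dz)\,dt\Bigr)
\end{equation*}
for bounded deterministic $h$ with compact support in $[0,T]\times\mathbb{R}_0$. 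Expanding the exponential in a power series and rearranging into symmetric multiple integrals yields $\mathcal{E}(h) = \sum_{n\ge 0} \tfrac{1}{n!} I_n(h^{\otimes n})$ in $L^2(P)$, so each $\mathcal{E}(h)$ lies in the closed span of the $I_n$'s. Totality of the family $\{\mathcal{E}(h)\}$ in $L^2(\mathcal{F}_T)$ is proved by a standard moment/characteristic-function argument: if $F\in L^2(\mathcal{F}_T)$ is orthogonal to every $\mathcal{E}(h)$, then the signed measure $F\,dP$ has vanishing characteristic functional on the cylinder $\sigma$-algebra generated by $\tilde N$, hence $F=0$ $P$-a.s.

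Finally I would synthesize the two ingredients. Given $F\in L^2(P)$, orthogonal projection onto each chaos $\mathcal{H}_n$ produces a unique symmetric kernel $f_n$ via the isometry, and density of $\bigoplus_n \mathcal{H}_n$ forces $F = \sum_{n\ge 0} I_n(f_n)$ in $L^2(P)$; the Parseval identity $\|F\|_{L^2(P)}^2 = \sum_{n\ge 0} n!\,\|f_n\|_{L^2((\lambda\times\mu)^n)}^2$ then follows from orthogonality and the per-chaos isometry. I expect the main obstacle to be the totality step: the combinatorial expansion of $\mathcal{E}(h)$ into symmetric iterated integrals requires care with the compensator terms and with the convergence of the series in $L^2(P)$, and the subsequent density argument must rely on the fact that $\mathcal{F}_T$ is generated by the values of $\tilde N$ on a suitable generating class, so that the exponentials separate $L^2(\mathcal{F}_T)$. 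All remaining verifications are routine once these two points are in place.
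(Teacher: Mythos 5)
This statement is quoted in the paper as a preliminary (Theorem 2.1, cited from the reference of Di Nunno, {\O}ksendal and Proske, Theorem 10.2) and the paper gives no proof of its own, so there is nothing internal to compare against; judged against the standard proof in the cited source, your outline is essentially that proof: per-chaos It\^o isometry and orthogonality via iterated integrals over the simplex, density of the span of exponential functionals in $L^2(\mathcal{F}_T)$ (using that $\mathcal{F}_T$ is generated by $\tilde N$), and then projection onto the chaoses to get existence, uniqueness and the Parseval identity. One small inaccuracy worth fixing: with your normalization
\begin{equation*}
\mathcal{E}(h)=\exp\Bigl(\int_0^T\!\!\int_{\mathbb{R}_0}h\,d\tilde N-\int_0^T\!\!\int_{\mathbb{R}_0}\bigl(e^{h}-1-h\bigr)\,\mu(dz)\,dt\Bigr),
\end{equation*}
this is the Dol\'eans-Dade exponential of $\int(e^{h}-1)\,d\tilde N$, so its chaos expansion is $\sum_{n\ge 0}\frac{1}{n!}I_n\bigl((e^{h}-1)^{\otimes n}\bigr)$, not $\sum_{n\ge 0}\frac{1}{n!}I_n(h^{\otimes n})$; the kernels $h^{\otimes n}$ correspond instead to $\exp\bigl(\int\ln(1+h)\,dN-\int h\,\mu(dz)\,dt\bigr)$ with $h>-1$. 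Either parametrization gives a family lying in the closed span of the chaoses and total in $L^2(\mathcal{F}_T)$, so the density step and hence the whole argument go through unchanged.
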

\noindent The Malliavin derivative operator 
$\mathit{D}: {\mathbb{D}}^{1,2} \subset {\mathit{L}}^2 (\mathit{P}) \longrightarrow {{\mathit{L}}^2}({\mathit{P}} \times \lambda \times \mu)$ is indicated as
\begin{equation*}
{\mathit{D} _{t,z}}{\mathit{F}}=\sum_{n=1}^{\infty} {\mathit{n}}{\mathit{I} _{n-1}}{(\mathit{f}_n {(.,{\mathit{t}},{\mathit{z}})})},
\end{equation*}
where $\mathbb{D}^{1,2}$ is the stochastic Sobolev space that consists of all $\mathcal{F}$-measurable random variables $F \in {L}^2 (P)$ with chaos expansion satisfying the convergence criterion
\begin{equation}{\label{1}}
{{\|}{\mathit{F}}{\|}}_{\mathbb{D}^{1,2}}^2 := \sum_{n=1}^{\infty}{ {\mathit{n}}{\mathit{n}!}{{{\|}{{\mathit{f}}_n}{\|}}_{{\mathit{L}}^2 (({\lambda}{\times}{\mu})^n)} ^ 2 } }{<}{\infty}.
\end{equation}

\noindent In fact, according to  (\ref{1}), we have ${{\|}{DF}{\|}}_{L^2 (P \times \lambda \times \mu)}^2={{\|}{F}{\|}}_{\mathbb{D}^{1,2}}^2 $.
We recall some classical properties held for the Malliavin derivatives of random variables. 
\begin{proposition}[\cite{nunno2008malliavin} Theorem 12.7]\label{pro22}
Let 
$ F,G \in {\mathbb{D}^{1,2}}.$
Then 
$ FG \in {\mathbb{D}^{1,2} } $
 and 
\begin{equation*}
D_{t,z} (FG) = F{D_{t,z} G}+ G{D_{t,z} F} + {D_{t,z} F} {D_{t,z} G}.
\end{equation*}
\end{proposition}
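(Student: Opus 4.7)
The plan is to exploit a characterization that is special to the Poisson (chaos) setting: the Malliavin derivative $D_{t,z}$, even though it is defined here through the chaos expansion, coincides with the add-one-cost (difference) operator
\[
\Psi_{t,z} F(\omega) := F(\omega + \delta_{(t,z)}) - F(\omega),
\]
where $\omega + \delta_{(t,z)}$ denotes the Poisson configuration obtained from $\omega$ by adding one jump of size $z$ at time $t$. Once this identification is in place, the product rule reduces to an algebraic identity, and the extra term $D_{t,z}F \cdot D_{t,z}G$ appears naturally, marking the essential difference from the Wiener product rule.

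I would first establish $D_{t,z} = \Psi_{t,z}$ on the class of multiple Poisson integrals. For $F = I_n(f_n)$ with $f_n \in \tilde L^2((\lambda\times\mu)^n)$, one writes $I_n(f_n)(\omega + \delta_{(t,z)})$ directly from the definition, expands the iterated $\tilde N$-integral over the perturbed configuration, and uses the symmetry of $f_n$ to conclude that
\[
\Psi_{t,z} I_n(f_n) = n\, I_{n-1}\bigl(f_n(\cdot, t, z)\bigr),
\]
which is exactly the definition of $D_{t,z} I_n(f_n)$. By linearity, the identification extends to finite chaos sums; the chaos expansion theorem quoted above together with the convergence criterion~(\ref{1}) propagates the identity to all of $\mathbb{D}^{1,2}$.

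With this in hand, the product rule is immediate: for $F,G\in\mathbb{D}^{1,2}$,
\[
D_{t,z}(FG) = (FG)(\omega+\delta_{(t,z)}) - (FG)(\omega)
= (F+D_{t,z}F)(G+D_{t,z}G) - FG,
\]
and expanding gives $F\, D_{t,z}G + G\, D_{t,z}F + D_{t,z}F\cdot D_{t,z}G$, as claimed.

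The main obstacle, and the reason the statement is more subtle than its Wiener analogue, is the verification that $FG \in \mathbb{D}^{1,2}$, i.e.\ that the right-hand side actually lies in $L^2(P\times\lambda\times\mu)$. The linear terms $F\, D_{t,z}G$ and $G\, D_{t,z}F$ are controlled by Cauchy--Schwarz whenever $F,G$ have enough moments, but the genuinely quadratic cross term $D_{t,z}F\cdot D_{t,z}G$ has no counterpart in the Brownian calculus and is what forces the chaos-expansion chain rule to fail in general. Bounding it requires an additional integrability input (typically $F,G$ bounded or at least having finite fourth moments with suitable control on their chaos coefficients); this is where the argument has to be handled carefully rather than by a formal manipulation, and it is precisely this failure that motivates our subsequent use of flow-type representations instead of a chain rule in the sensitivity analysis.
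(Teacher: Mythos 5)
This proposition is not proved in the paper at all: it is quoted verbatim as a preliminary, with the proof delegated to the cited reference (Di Nunno--{\O}ksendal--Proske, Theorem 12.7). Your route --- identify $D_{t,z}$ with the add-one-cost operator $\Psi_{t,z}F(\omega)=F(\omega+\delta_{(t,z)})-F(\omega)$ on multiple Poisson integrals, extend by density, and then read off the product rule from the algebraic identity $(F+D_{t,z}F)(G+D_{t,z}G)-FG$ --- is exactly the standard (Nualart--Vives) argument that the cited source uses, so in approach you are aligned with the reference rather than with anything in the paper itself. Two caveats on rigor: the extension of the pointwise identity $\Psi_{t,z}I_n(f_n)=nI_{n-1}(f_n(\cdot,t,z))$ from finite chaos sums to all of $\mathbb{D}^{1,2}$ is not a formal limit exchange; it is the substantive closability statement of Nualart--Vives (for $F\in L^2(P)$, $F\in\mathbb{D}^{1,2}$ if and only if $\Psi F\in L^2(P\times\lambda\times\mu)$, and then $DF=\Psi F$), and you need it in this ``if and only if'' form, not merely as an identity on $\mathbb{D}^{1,2}$, because your display $D_{t,z}(FG)=(FG)(\omega+\delta_{(t,z)})-(FG)(\omega)$ presupposes the very membership $FG\in\mathbb{D}^{1,2}$ you are trying to prove.

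Your final paragraph is where the genuine gap sits, but it is a gap you have correctly diagnosed rather than papered over: from $F,G\in\mathbb{D}^{1,2}$ alone one cannot conclude $FG\in L^2(P)$ (take $F=G\notin L^4$), nor that the cross term $D_{t,z}F\,D_{t,z}G$ is in $L^2(P\times\lambda\times\mu)$, so the statement as quoted in the paper is not literally provable. The original Theorem 12.7 in the cited book carries exactly the missing hypotheses --- $FG\in L^2(P)$ and $F D_{t,z}G+G D_{t,z}F+D_{t,z}F\,D_{t,z}G\in L^2(P\times\lambda\times\mu)$ --- under which your argument closes: the add-one-cost identity gives $\Psi_{t,z}(FG)$ equal to that right-hand side, the assumed integrability puts it in $L^2(P\times\lambda\times\mu)$, and the Nualart--Vives characterization then yields $FG\in\mathbb{D}^{1,2}$ together with the stated formula. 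So rather than searching for boundedness or fourth-moment conditions ad hoc, you should either import those hypotheses (as the source does) or note explicitly that the paper's quotation drops them; in the paper's applications the factors are of the form $\mathbf{1}_{\{\cdot\}}$ and positive parts of $\mathbb{D}^{1,2}$ variables with all moments, so the conditions are satisfied there.
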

\begin{proposition}[\cite{nunno2008malliavin} Theorem 12.8]\label{pro23}
Let 
$ F \in {\mathbb{D}^{1,2}}$ 
and let 
$\varphi$
be a real continuous function on 
$\mathbb{R}$. Suppose 
$\varphi (F) \in {L^2}(P)$
 and 
$\varphi (F+{D_{t,z} F}) \in {L^2 (P \times \lambda \times \mu)}.$
Then 
$\varphi (F) \in {\mathbb{D}^{1,2}}$
and 
\begin{equation*}
D_{t,z}(\varphi (F))=\varphi (F + {D_{t,z}F})- \varphi(F).
\end{equation*}
\end{proposition}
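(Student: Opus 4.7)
The plan is to first establish the difference formula for polynomials by induction on the degree using the product rule of Proposition \ref{pro22}, then extend to continuous $\varphi$ by polynomial approximation combined with a truncation argument, and finally pass to the limit using the closability of $D$ on $\mathbb{D}^{1,2}$ together with the $L^2$-integrability hypotheses on $\varphi(F)$ and $\varphi(F+D_{t,z}F)$.

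For the polynomial step, the base cases are immediate: the identity is trivial for constants and reduces to $D_{t,z}F = (F+D_{t,z}F) - F$ for $\varphi(x)=x$. Assuming inductively that $D_{t,z}(F^{n-1}) = (F+D_{t,z}F)^{n-1} - F^{n-1}$, I would apply Proposition \ref{pro22} to $F^n = F\cdot F^{n-1}$ and simplify: the three summands in the product rule telescope to $(F+D_{t,z}F)^n - F^n$. Linearity extends the formula to every polynomial $\varphi$, so the "difference chain rule" holds on a dense subclass.

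To extend to continuous $\varphi$, I would use a double approximation. Fix $R>0$, pick a continuous cutoff $\chi_R$ with $\chi_R\equiv 1$ on $[-R,R]$ and support in $[-R-1,R+1]$, and invoke the Weierstrass theorem to approximate $\varphi\chi_R$ uniformly on $[-R-1,R+1]$ by polynomials $p_{R,n}$. The polynomial step then gives
\begin{equation*}
D_{t,z}(p_{R,n}(F)) = p_{R,n}(F+D_{t,z}F) - p_{R,n}(F).
\end{equation*}
On the localizing event $A_R = \{|F|\le R\}\cap\{|F+D_{t,z}F|\le R\}$ the right-hand side converges uniformly to $\varphi(F+D_{t,z}F)-\varphi(F)$ as $n\to\infty$, while the hypotheses $\varphi(F)\in L^2(P)$ and $\varphi(F+D_{t,z}F)\in L^2(P\times\lambda\times\mu)$ supply the dominating integrals needed to pass the limit in the appropriate $L^2$ norms. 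Sending $R\to\infty$ then yields an $L^2(P\times\lambda\times\mu)$-bounded candidate derivative, and closability of $D$ (which follows directly from the chaos-expansion definition (\ref{1})) delivers $\varphi(F)\in\mathbb{D}^{1,2}$ together with the claimed formula.

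The main obstacle will be controlling the approximating polynomials outside the compact interval where Weierstrass gives uniform control: $p_{R,n}(F)$ and $p_{R,n}(F+D_{t,z}F)$ may be large on the complement of $A_R$, and their size there is not tied to $\varphi$ at all. The correct remedy is to carry out the approximation strictly inside $A_R$ and to dominate the tail by $|\varphi(F)|+|\varphi(F+D_{t,z}F)|$ (possibly after replacing $p_{R,n}$ by $p_{R,n}\chi_R$ so that the approximants inherit the growth of $\varphi\chi_R$). Once this localization is arranged, the $L^2$ hypotheses close the argument via dominated convergence, and the chain rule follows.
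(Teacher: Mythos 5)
A preliminary remark: the paper does not prove this proposition at all --- it is recalled verbatim from \cite{nunno2008malliavin} (Theorem 12.8) in the Preliminaries, so there is no in-paper proof to compare yours against; your sketch can only be judged on its own terms, and on those terms it has a real gap.

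The polynomial induction is correct as algebra (the product rule of Proposition \ref{pro22} does telescope to $(F+D_{t,z}F)^n-F^n$), but it already presupposes $F^{k}\in\mathbb{D}^{1,2}$, which fails for a general $F\in\mathbb{D}^{1,2}$ since $F^{k}$ need not even be in $L^2(P)$; so the polynomial case is only available after truncating $F$, and truncating $F$ means applying a non-polynomial function to $F$ --- exactly the statement you are trying to prove, so the circularity must be broken by some independent input. The more serious problem is the limiting step. Closability of $D$ requires genuine $\mathbb{D}^{1,2}$ approximants $G_n$ with $G_n\to\varphi(F)$ in $L^2(P)$ and $DG_n$ convergent in $L^2(P\times\lambda\times\mu)$, and your candidates do not deliver this: if you keep the bare polynomials $p_{R,n}$, the hypotheses $\varphi(F)\in L^2(P)$ and $\varphi(F+D_{t,z}F)\in L^2(P\times\lambda\times\mu)$ give no domination of $p_{R,n}(F)$ or $p_{R,n}(F+D_{t,z}F)$ on the complement of $A_R$, where the polynomials are unrelated to $\varphi$; if you replace $p_{R,n}$ by $p_{R,n}\chi_R$, the approximant is no longer a polynomial and the identity from step 1 no longer applies to it. Nor can ``approximation strictly inside $A_R$'' be rescued by a locality argument: in this chaos/Poisson setting $D_{t,z}$ is a difference operator and is \emph{not} local (knowing $F$ on an event does not determine $D_{t,z}F$ there), unlike the Brownian Malliavin derivative. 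The missing ingredient --- and, in substance, the way the cited reference proceeds --- is the identification of $D_{t,z}$ with the add-one-jump increment operator, $D_{t,z}F=F\circ\epsilon^{+}_{(t,z)}-F$; with that characterization the formula $D_{t,z}\varphi(F)=\varphi(F+D_{t,z}F)-\varphi(F)$ is essentially immediate for continuous $\varphi$, and the two $L^2$ hypotheses are precisely what is needed to conclude $\varphi(F)\in\mathbb{D}^{1,2}$. Without that (or an equivalent substitute), the passage from polynomials to general continuous $\varphi$ in your plan does not go through.
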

\noindent Let $\mathit{X} = \mathit{X(t, z)}, 0 \le {\mathit{t}} \le {\mathit{T}}, \mathit{z} \in {{\mathbb{R}}_0},$ be a stochastic process such that $\mathit{X}(\mathit{t}, \mathit{z})$ is an $\mathcal{F}_{\mathit{T}}$-measurable random variable and has a chaos expansion with the elements ${{\tilde{f}}_n \in {\tilde{L}}^2 ((\lambda \times \mu)^n), n = 1, 2, ... .}$ . Suppose that,
\begin{equation*}
\sum_{n=0}^{\infty} {(n+1)! {\| {{\tilde{f}}_n} \|}^2 _{{\tilde{L}}^2 ((\lambda \times \mu)^{(n+1)})}} < {\infty},
\end{equation*}
then the Skorohod integral $\delta (X)$ with respect to $\tilde{N}$ is defined as 
\begin{equation*}
\delta (\mathit{X}) = \int_{0}^{T}{\int_{{\mathbb{R}}_0}^{}{X(t, z) \tilde{N} (dz, \delta t)}} := \sum_{n=0}^{\infty} {{I}_{n+1} ({\tilde{f}}_n)}.
\end{equation*}
\noindent  Denote by $\mathbb{L} ^{1,2}$ the set of processes $g$ in $L^2 (P \times \mu)$ such that $g(z)$ belongs to $\mathbb{D}^{1,2}$ for almost all $z$ and there exists a measurable version of the two-parameter process $Dg$ such that
\begin{equation*}
\mathbb{E} \Big( {\|{Dg}\|}^2 _{L^2 {(\lambda \times \mu)^2}} \Big) < \infty.
\end{equation*}
\begin{lemma}[\cite{nualart2018introduction} Lemma 10.2.4]
We have that  $\mathbb{L} ^{1,2} \subset Dom{\hspace{0.2mm}} \delta$.
\end{lemma}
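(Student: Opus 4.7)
The plan is to verify directly, via chaos expansion, the summability criterion that defines $\mathrm{Dom}\,\delta$. Fix $g\in\mathbb{L}^{1,2}$ and expand, for almost every pair $(t,z)$,
$$g(t,z)=\sum_{n=0}^{\infty} I_n\bigl(f_n(\cdot,t,z)\bigr),$$
where each kernel $f_n(\cdot,t,z)\in\tilde{L}^2((\lambda\times\mu)^n)$ is symmetric in its first $n$ pairs of variables. Viewing $f_n$ now as a function of $n+1$ pairs (the last one being $(t,z)$), let $\tilde{f}_n$ denote its full symmetrization over all $n+1$ arguments; this is the kernel that enters the definition of $\delta(g)=\sum_n I_{n+1}(\tilde{f}_n)$.

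First I would evaluate the two contributions to the $\mathbb{L}^{1,2}$-norm using the isometry $\mathbb{E}[I_n(f)^2]=n!\|f\|^2$ of iterated integrals. The process norm gives
$$\|g\|^2_{L^2(P\times\lambda\times\mu)}=\sum_{n=0}^{\infty} n!\,\|f_n\|^2_{L^2((\lambda\times\mu)^{n+1})},$$
while differentiating term-by-term through $D_{s,y}I_n(f_n(\cdot,t,z))=n\,I_{n-1}(f_n(\cdot,s,y,t,z))$ and applying the isometry once more yields
$$\mathbb{E}\|Dg\|^2_{L^2((\lambda\times\mu)^2)}=\sum_{n=1}^{\infty} n\cdot n!\,\|f_n\|^2_{L^2((\lambda\times\mu)^{n+1})}.$$
Adding these two identities and using $n!+n\cdot n!=(n+1)!$ produces the clean identity
$$\|g\|^2_{L^2(P\times\lambda\times\mu)}+\mathbb{E}\|Dg\|^2_{L^2((\lambda\times\mu)^2)}=\sum_{n=0}^{\infty}(n+1)!\,\|f_n\|^2_{L^2((\lambda\times\mu)^{n+1})},$$
which is finite by hypothesis on $g$.

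Next I would pass from $f_n$ to its full symmetrization $\tilde{f}_n$. Jensen's inequality applied to the average over the $(n+1)!$ permutations (equivalently Cauchy--Schwarz together with the permutation-invariance of the $L^2$-norm) gives $\|\tilde{f}_n\|^2_{L^2((\lambda\times\mu)^{n+1})}\leq \|f_n\|^2_{L^2((\lambda\times\mu)^{n+1})}$. Combined with the identity above this produces
$$\sum_{n=0}^{\infty}(n+1)!\,\|\tilde{f}_n\|^2_{L^2((\lambda\times\mu)^{n+1})}<\infty,$$
which is exactly the criterion stated just before the lemma for $g$ to be Skorohod integrable, so $g\in\mathrm{Dom}\,\delta$ as required.

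I do not expect any real obstacle here beyond careful bookkeeping: the symmetrization step must not swallow information, and the two indices (the "internal" chaos variables and the "external" process variable $(t,z)$) must be kept distinct until they are merged into a single $(n+1)$-tuple at the very end. The only mild subtlety is checking that a measurable choice of the chaos kernels $f_n$ can be made in $(t,z)$, which is handled by the measurability assumption built into the definition of $\mathbb{L}^{1,2}$.
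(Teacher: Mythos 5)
Your argument is correct: the isometry computation giving $\|g\|^2_{L^2(P\times\lambda\times\mu)}+\mathbb{E}\|Dg\|^2_{L^2((\lambda\times\mu)^2)}=\sum_{n\ge 0}(n+1)!\,\|f_n\|^2$, followed by the symmetrization bound $\|\tilde f_n\|\le\|f_n\|$, is exactly the standard chaos-expansion proof of this inclusion. The paper itself offers no proof but simply cites Lemma 10.2.4 of \cite{nualart2018introduction}, and your reasoning coincides with the argument given there, so there is nothing to add.
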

\noindent Nualart \cite{nualart2018introduction} in proposition 10.2.2 has proved that $\delta$ is the adjoint operator of $D$. The duality formula between the Malliavin derivative and the Skorohod integral in the setting of the Poisson process is stated in the following proposition.
\begin{proposition}[\cite{nunno2008malliavin} Theorem 12.10]
Let 
$X(t,z), t \in {\mathbb{R}_{+}}, z \in {\mathbb{R}_0}, $
be Skorohod integrable and 
$F \in {\mathbb{D}^{1,2}}. $
Then 
\begin{equation*}
{\mathbb{E}}{\Big (}{\int_{0}^{\infty} {\int_{\mathbb{R}_0}^{} {X(t,z) {D_{t,z}F}}\, {\mu (dz)}}\, dt}{\Big )}={\mathbb{E}}{\Big (}{F\int_{0}^{\infty} {\int_{\mathbb{R}_0}^{} {X(t,z)}\, }\, {\tilde{N}}(dz, \delta t)}{\Big )}.
\end{equation*}
\end{proposition}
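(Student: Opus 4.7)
The plan is to expand both sides of the duality identity in the $L^2$-orthogonal family of iterated integrals and match the coefficients chaos by chaos. Write
\[ F = \sum_{n=0}^{\infty} I_n(f_n), \qquad X(t,z) = \sum_{n=0}^{\infty} I_n(\tilde f_n(\cdot,t,z)), \]
with $f_n$ symmetric in its $n$ arguments and $\tilde f_n(\cdot,t,z)$ symmetric in its first $n$ time-space arguments for each fixed $(t,z)$. The definition of the derivative recalled just above the statement gives
\[ D_{t,z}F = \sum_{n=1}^{\infty} n\,I_{n-1}\bigl(f_n(\cdot,t,z)\bigr), \]
while by the construction of $\delta$ recalled just before the lemma of Nualart,
\[ \delta(X) = \sum_{n=0}^{\infty} I_{n+1}\bigl(\widehat{\tilde f}_n\bigr), \]
where $\widehat{\tilde f}_n$ denotes the symmetrization of $\tilde f_n$ in all $n+1$ arguments.

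First I would treat the left-hand side. Interchanging the expectation with the $dt\,\mu(dz)$ integration (justified below) and applying the orthogonality relation $\mathbb E[I_m(g)I_n(f)] = \delta_{m,n}\,n!\,(g,f)_{L^2((\lambda\times\mu)^n)}$ recalled earlier, only the diagonal levels survive and one obtains
\[ \mathrm{LHS} = \sum_{n=0}^{\infty}\int_0^{\infty}\!\!\int_{\mathbb R_0} (n+1)\,n!\,\bigl(\tilde f_n(\cdot,t,z),\,f_{n+1}(\cdot,t,z)\bigr)_{L^2((\lambda\times\mu)^n)}\,\mu(dz)\,dt, \]
which regroups, by bringing $dt\,\mu(dz)$ inside the inner product, into $\sum_{n=0}^{\infty}(n+1)!\,(\tilde f_n, f_{n+1})_{L^2((\lambda\times\mu)^{n+1})}$.

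Next I would expand the right-hand side directly using the same orthogonality, which yields
\[ \mathbb E[F\,\delta(X)] = \sum_{n=0}^{\infty}(n+1)!\,\bigl(f_{n+1},\widehat{\tilde f}_n\bigr)_{L^2((\lambda\times\mu)^{n+1})}. \]
Because $f_{n+1}$ is symmetric in all $n+1$ arguments, pairing it against the symmetrization $\widehat{\tilde f}_n$ equals pairing it against the original kernel $\tilde f_n$, so the right-hand side matches the left-hand side term by term.

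The one step that is not purely mechanical is the justification of the Fubini-type interchange of expectation, the $dt\,\mu(dz)$ integration, and the summation over chaos levels on the left-hand side. This follows from $F\in\mathbb D^{1,2}$ together with $X\in\mathbb L^{1,2}\subset\mathrm{Dom}\,\delta$: a chaos-wise Cauchy-Schwarz, using the isometries recalled in the preliminaries, bounds the absolute sum by $\|DF\|_{L^2(P\times\lambda\times\mu)}\,\|X\|_{L^2(P\times\lambda\times\mu)}$, both finite. Together with the symmetrization identity used on the right, this is the only delicate point in the argument; everything else is bookkeeping on the chaos kernels.
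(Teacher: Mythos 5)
Your chaos-by-chaos matching is correct and is precisely the standard argument: expand $F$, $X$, $D_{t,z}F$ and $\delta(X)$ in iterated integrals, use orthogonality on each level, and use that pairing the symmetric kernel $f_{n+1}$ against the symmetrization $\widehat{\tilde f}_n$ equals pairing it against $\tilde f_n$. The paper itself offers no proof of this proposition --- it is quoted as Theorem 12.10 of the cited reference of Di Nunno, {\O}ksendal and Proske --- and your argument coincides with the proof given there, including the Cauchy--Schwarz bound $\|X\|_{L^2(P\times\lambda\times\mu)}\,\|DF\|_{L^2(P\times\lambda\times\mu)}$ that legitimizes the Fubini and term-by-term steps. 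One cosmetic point: the hypothesis only gives that $X$ is Skorohod integrable, not $X\in\mathbb{L}^{1,2}$, so you should not invoke $\mathbb{L}^{1,2}\subset\mathrm{Dom}\,\delta$; what your estimate actually uses is $X\in L^2(P\times\lambda\times\mu)$, which is part of the setup under which the Skorohod integral (and the chaos kernels $\tilde f_n$) are defined, so the argument stands once that reference is corrected.
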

\noindent The following result presents the fundamental theorem of calculus for Poisson random measure.
\begin{proposition}[\cite{nunno2008malliavin} Theorem 12.15]\label{derjump}
Let 
$X=X(s,y), (s,y) \in [0,T] \times {\mathbb{R}_{0}}, $
be a stochastic process such that 
\begin{equation*}
\mathbb{E}{\Big (}{\int_{0}^{\infty}} {{\int_{\mathbb{R}_{0}}^{} {X^2 (s,y)}\, \mu (dz)}\, ds}{\Big)}{<}{\infty}.
\end{equation*}
Assume that 
$X(s,y) \in {\mathbb{D}^{1,2}}$
 for all 
$(s,y) \in [0,T] \times {\mathbb{R}_{0}}, $
and
$D_{t,z}X(.,.)$
 is Skorohod integrable with 
\begin{equation*}
\mathbb{E}{\Big ( }{\int_{0}^{T} {\int_{\mathbb{R}_{0}}^{} \Big{(}\int_{0}^{T} {\int_{\mathbb{R}_{0}}^{} {D_{t,z}X(s,y)}\, }\, \tilde{N}(dy, \delta s) \Big{)}^2\, \mu (dz)}\, dt} {\Big ) } {<}{\infty}.
\end{equation*}
Then for every real function $\gamma$, 
\begin{equation*}
\int_{0}^{t} {\int_{\mathbb{R}_{0}}^{} {\gamma ({X(s,y)})}\, }\, \tilde{N}(dy, \delta s) \in {\mathbb{D}^{1,2}},
\end{equation*}
and
\begin{align*}
D_{t,z}\int_{0}^{T} {\gamma (X(s, y))}& {\tilde{N}}(dy, \delta s) ={\gamma (X(t, z))} +   \notag\\
&\int_{t}^{T} \int_{{\mathbb{R}}_{0}}^{} \Big{(}{\gamma \big{(}X(s, y) + D_{t,z} X(s, y)\big{)}} - {\gamma (X(s, y))}\Big{)} {\tilde{N}}(dy, \delta s) .
\end{align*}
\end{proposition}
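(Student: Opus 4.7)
The plan is to combine two ingredients already established in Section~2: the jump chain rule of Proposition~\ref{pro23} and the commutation relation between the Malliavin derivative $D$ and the Skorohod integral $\delta$ in the Wiener--Poisson setting, which for sufficiently regular integrands $u(s,y)$ reads
$$D_{t,z}\,\delta(u) \;=\; u(t,z) \;+\; \delta(D_{t,z} u).$$
Applying this with $u(s,y) = \gamma(X(s,y))$ produces the boundary term $\gamma(X(t,z))$ (the integrand evaluated at the point $(t,z)$ where one is differentiating) together with a Skorohod integral of $D_{t,z}\gamma(X(s,y))$; Proposition~\ref{pro23} then converts the latter into $\gamma(X(s,y)+D_{t,z}X(s,y))-\gamma(X(s,y))$, which is exactly the integrand displayed on the right-hand side.

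First I would verify that $\gamma(X(s,y))\in\mathbb{D}^{1,2}$ pointwise and that the process $(s,y)\mapsto \gamma(X(s,y))$ is Skorohod integrable. Pointwise Malliavin differentiability, together with the explicit formula for $D_{t,z}\gamma(X(s,y))$, follows immediately from Proposition~\ref{pro23} applied to $F=X(s,y)$, whose membership in $\mathbb{D}^{1,2}$ is part of the hypothesis. Skorohod integrability amounts to checking the summability criterion $\sum_{n}(n+1)!\,\|\tilde{g}_{n}\|^{2}<\infty$ for the symmetrized chaos coefficients of $\gamma(X(\cdot,\cdot))$; the first $L^2$ hypothesis of the proposition controls $\gamma(X)$ itself, while the second -- stated precisely in the form of a Skorohod integral of $D_{t,z}X(s,y)$ -- provides the bound required once a derivative is applied under the integral sign.

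Next I would derive the commutation identity $D_{t,z}\delta(u)=u(t,z)+\delta(D_{t,z}u)$ by expanding both sides in chaos series. Writing $u(s,y)=\sum_{n} I_{n}(g_{n}(\cdot\,;s,y))$, one obtains $\delta(u)=\sum_{n} I_{n+1}(\tilde{g}_{n})$, where $\tilde{g}_{n}$ is the symmetrization over all $n+1$ time--jump variables; applying $D_{t,z}$ brings down a factor $n+1$ and pins one slot to $(t,z)$. Splitting the symmetrization according to whether $(t,z)$ occupies the new (Skorohod) slot or one of the original $n$ slots separates $u(t,z)$ from $\delta(D_{t,z}u)$. Substituting Proposition~\ref{pro23} into the Skorohod integral then yields the stated formula, and the reduction of the outer integral from $[0,T]$ to $[t,T]$ is a consequence of adaptedness: the chaos representation of $X(s,y)$ is supported on time variables $\leq s$, so $D_{t,z}X(s,y)=0$ whenever $s<t$, and then $\gamma(X(s,y)+D_{t,z}X(s,y))-\gamma(X(s,y))$ vanishes on $[0,t)$.

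The main technical obstacle is the chaos-expansion proof of the commutation identity. The combinatorics of the symmetrization must be tracked carefully, and one must verify that the two integrability conditions in the hypothesis are precisely what is required to interchange the summation in $n$, the Malliavin derivative, and the Skorohod integral -- and, in particular, to ensure that $D_{t,z}u$ itself remains Skorohod integrable so that $\delta(D_{t,z}u)$ is a well-defined element of $L^{2}(P\times\lambda\times\mu)$. Once this core identity is secured, the pointwise Malliavin differentiability and Skorohod integrability of $\gamma(X)$, together with the truncation to $[t,T]$, follow as soft bookkeeping from the hypotheses and from the support properties of adapted chaos expansions.
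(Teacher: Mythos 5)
The paper does not actually prove this proposition---it is quoted as Theorem 12.15 of \cite{nunno2008malliavin}---and your argument is essentially the standard proof of that result: the chaos-expansion commutation identity $D_{t,z}\delta(u)=u(t,z)+\delta(D_{t,z}u)$ applied to $u(s,y)=\gamma(X(s,y))$, combined with the jump chain rule of Proposition~\ref{pro23} and the adaptedness of $X$ (so $D_{t,z}X(s,y)=0$ for $s<t$) to reduce the outer integral to $[t,T]$, which is exactly the route taken in the cited source. The one point you should state explicitly rather than gloss is that the phrase ``for every real function $\gamma$'' must be read with the hypotheses transferred to the process $\gamma(X(\cdot,\cdot))$---continuity of $\gamma$, $\gamma(X(s,y))\in L^2(P)$, $\gamma(X(s,y)+D_{t,z}X(s,y))\in L^2(P\times\lambda\times\mu)$, and Skorohod integrability of $D_{t,z}\gamma(X(\cdot,\cdot))$---since the stated $L^2$ bound on $X$ alone does not control $\gamma(X)$ for an arbitrary $\gamma$, so your claim that ``the first $L^2$ hypothesis controls $\gamma(X)$ itself'' needs this qualification.
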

\section{Mean-Field SDEs with jump}
\noindent
 Consider the Wiener space  
${(\Omega_1,\mathcal{F}^W,{P^W})}$
which the filtration 
${\mathcal{F}^W}$ is generated by 
${W:=\{(W^{1}(t), W^{2} (t), ... , W^{d} (t))^{'}{\mid} t\in[0,T]\}},$ 
where 
${W^{i} , i=1,2, ... ,d}$
 are independent with each other, $\mathit{T\in{\mathbb{R}^+}}$ and 
${C^{'}}$
 denotes the transpose of any vector or matrix 
${C}$. Also, suppose that 
${\mathit{N}(dz,dt)}$
is a Poisson random measure on product measurable space 
$([0,T]\times {\mathbb{R}_{0}}, {\mathcal{B} ([0,T])}\times {\mathcal{B}(\mathbb{R}_{0})})$, 
and $\tilde{N}(dz, dt)$ denotes compensated Poisson random measure by the Poisson space ${(\Omega_2,\mathcal{F}^N,{P^N})}$.
\noindent We assume that the Brownian motion and the Poisson random measure are stochastically independent under 
${P}=P^W\times P^N$. In the sequel, we call the Wiener-Poisson space ${(\Omega,\mathcal{F},{P})}$  where $\Omega=\Omega_1 \times \Omega_2$ and ${\mathbb{F}:=\{ \mathcal{F}(t) \mid t \in [0,T]\}}$ is the right-continuous and
${{P}}$-complete natural filtration generated by both the Brownian motion and the Poisson random measure. 
 Now consider the following mean-field SDE with jumps
\begin{align}\label{sde1}
dX_t ^x&=b(t,\rho_{X_t ^x})X_t^xdt+\sum_{n=1}^d{{\sigma_n}(t,X_t^{x},\pi_{X_t ^x})d{W_t ^n}}+\int_{\mathbb{R}_0}^{} {\lambda(t,X_t^x, z, \eta_{X_t ^x})\tilde{N}(dz,dt)}, \nonumber\\
X(0)&=x \in {\mathbb{R}}^k-\{0\} , 
\end{align}
\noindent where
the functions $b:  [0,T] \times {\mathbb{R}^k}  \rightarrow {\mathbb{R}^k}$,
${\sigma_n}:\Omega \times [0,T] \times {\mathbb{R}^k} \times {\mathbb{R}^k} \rightarrow {\mathbb{R}^k}$ and 
$\lambda :\Omega \times [0,T] \times {\mathbb{R}^k} \times {\mathbb{R}_0} \times {\mathbb{R}^k} \rightarrow {\mathbb{R}^{k \times l}}$ are $\mathbb{F}$-measurable, 
\begin{center}
${\rho_{X_t ^x}:={\mathbb{E}(X_t^x)}}, \hspace{5mm} {\pi_{X_t ^x}:={\mathbb{E}(\psi(X_t^{x}))}}, \hspace{5mm} {\eta_{X_t ^x}:={\mathbb{E}(\xi(X_t^{x}))}}$,
\end{center}
 in which
$\psi$, $\xi : {\mathbb{R}^k \to {\mathbb{R}^k}}$ are also measurable functions. We assume the following conditions for these functions throughout the paper and denote $|y|$ for the Euclidean norm in the associated space of $y$. \\
%
\begin{assumption}\label{assum1} 
The functions $b, \sigma_n, \lambda, \psi, \xi $ satisfy the following conditions. 
\begin{enumerate}
\item The functions $\sigma_n, \lambda, \psi, \xi $ are continuously differentiable with the Lipschitz bound parameter $\mathit{K}$, i.e., there exists a positive constant $K$ and a function ${\varrho :{\mathbb{R}_0}\rightarrow \mathbb{R}}$ satisfying $\int_{\mathbb{R}_0}^{}{{\varrho ^p}(z) \mu(dz)} < \infty$, for every $p \geq 1$, such that for every $t, s \in [0, T]$ and stochastic processes $X$ and $Z$ where, $n=1, ...,d$, we have
\begin{align}
\big|{\sigma_{n}} (t, X_t ^x, {\pi}_{X_t ^x})&- {\sigma_{n}} (s, Z_s ^z, {\pi}_{Z_s ^z})\big|^2 \nonumber \\
& \leq K \big( {|t-s|^2 + | X_t ^x - Z_s ^z |^2 + \mathbb{E}\vert X_t ^x - Z_s ^z\vert^2 }\big), \notag\\
\big|{\lambda} (t, X_t ^x, z, {\eta}_{X_t ^x})& -{\lambda} (s, Z_s ^z, z, {\eta}_{Z_s ^z})\big|^2 \nonumber \\
&\leq {\varrho(z)} \big( {|t-s|^2 +| X_t ^x - Z_s ^z |^2 + \mathbb{E}\vert X_t ^x - Z_s ^z\vert^2 }\big).\label{condition}
\end{align}
\item The functions $b, \sigma_n, \lambda, \psi, \xi$ has a linear growth.
\end{enumerate}
\end{assumption}
\noindent We assume that the function $f(.):[0,T] \rightarrow \mathbb{R}^k$ is the unique bounded non-zero solution to the following differential equation 
\begin{equation}\label{ft}
-b(t, f(t))f(t) + \dot{f(t)}=0, \qquad f(0)=x.
\end{equation}
 \begin{remark}
 We observe that this condition is not an unusual limitation with the following example when $b(t, f(t))$  has a linear form. Let $b(t, f(t)) = c + af(t)$ with the condition $\sup_{0\le t \le T}Kae^{ct} >1$ when  $K=\frac{f(0)}{c+af(0)}$. Solving the equation \eqref{ft} results in the bounded function 
 \begin{align*}
f(t) = \frac{Kce^{ct}}{1-Kae^{ct}}.
\end{align*}
\end{remark}
 \noindent Consider the following stochastic differential equation
\begin{equation}\label{sde2}
d{S_t}={ \sum_{n=1}^{d}\{{\tilde{\sigma_n} (t, S_t, {\pi}_{ S_t})}}\}d{W_t ^n} + \int_{\mathbb{R}_0}^{}\{{\tilde{\lambda} (t, S_t, z, {\eta}_{S_t})}\} \tilde{N}(dz, dt), \quad S_0=x,
\end{equation}
where 
\begin{align}
\tilde{\sigma_{n}} (t, S_t, {\pi}_{S_t})&=xf^{-1}(t){\sigma}_n (t, \frac{1}{x}f(t)S_t, {\pi}_{\frac{1}{x}f(t)S_t}), \hspace{5mm} n=1,...,d, \notag\\
\tilde{\lambda} (t, S_t, z, \eta_{S_t}) & =xf^{-1}(t)\lambda(t,  \frac{1}{x}f(t)S_t ,z, \eta_{\frac{1}{x}f(t)S_t})\notag\\
&=: {e_{t, X_t ^x}}\lambda(t, \frac{1}{x}f(t)S_t ,z, \eta_{\frac{1}{x}f(t)S_t})\label{tilde}
\end{align}
which indicates that $f^{-1}$ serves as the inverse of $f$. Obviously, under Assumption \rm{\ref{assum1}}, the functions $\tilde{\sigma_{n}}$ and $\tilde{\lambda}$ are Lipschitz and satisfy again in \eqref{condition}. According to the Itô formula, it is shown that the SDE (\ref{sde1}) has a unique solution if and only if the SDE (\ref{sde2}) has a unique solution as we will see in the following theorem. 
\begin{theorem}\label{XT}
Under Assumption \rm{\ref{assum1}}, the SDE (\ref{sde1}) has a unique solution $X_t ^x \in L^p(P)$, $p \geq 1$.  In fact, for every solution $S_t \in L^{p}(P)$ of SDE (\ref{sde2}), the process $\frac{1}{x}f(t) S_t$ is a solution of SDE (\ref{sde1}), and vice versa.
\end{theorem}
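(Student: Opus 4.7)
The plan is to prove the theorem by reducing SDE (\ref{sde1}) to the driftless SDE (\ref{sde2}) via the deterministic rescaling $S_t = xf^{-1}(t)X_t^x$, and then establishing existence and uniqueness for the simpler equation (\ref{sde2}) by a standard Picard-type argument that exploits the Lipschitz property of $\tilde{\sigma}_n$ and $\tilde{\lambda}$ already noted after (\ref{tilde}). The reduction hinges on one clean observation: any integrable solution of (\ref{sde1}) must satisfy $\mathbb{E}[X_t^x]=f(t)$, so the apparently nonlinear mean-field term $b(t,\rho_{X_t^x})$ is in fact pinned down as a deterministic function of $t$ and the drift becomes linear in $X_t^x$.

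First I would verify the identification $\rho_{X_t^x}=f(t)$. Taking expectations in (\ref{sde1}) and using the linear-growth part of Assumption \ref{assum1} to conclude that the stochastic integrals have zero mean, the deterministic function $m(t):=\mathbb{E}[X_t^x]$ is seen to solve
\begin{equation*}
\dot{m}(t)=b(t,m(t))\,m(t),\qquad m(0)=x,
\end{equation*}
which is exactly the ODE (\ref{ft}). The assumed uniqueness of its bounded non-zero solution forces $m(t)=f(t)$. Next I would apply Itô's formula to $S_t:=xf^{-1}(t)X_t^x$. The contribution from $\partial_t(xf^{-1}(t))X_t^x$ combines with the drift of (\ref{sde1}) into
\begin{equation*}
\bigl(-xf^{-2}(t)\dot f(t)+xf^{-1}(t)b(t,f(t))\bigr)X_t^x\,dt,
\end{equation*}
which vanishes by (\ref{ft}). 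After substituting $X_t^x=\frac{1}{x}f(t)S_t$ into the diffusion and jump terms and recognising the definitions (\ref{tilde}), one arrives precisely at (\ref{sde2}). The converse direction is symmetric: if $S_t$ solves (\ref{sde2}), taking expectations yields $\mathbb{E}[S_t]=x$, so $\mathbb{E}[\tfrac{1}{x}f(t)S_t]=f(t)$, and a second application of Itô's formula recovers (\ref{sde1}) with $b(t,\rho_{X_t^x})=b(t,f(t))$.

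Once this bijection between $L^p$-solutions of (\ref{sde1}) and $L^p$-solutions of (\ref{sde2}) is in place, it remains to prove existence and uniqueness for (\ref{sde2}) alone. Working on the Banach space $\mathcal{S}^p_T$ of $\mathbb{F}$-adapted càdlàg processes with $\mathbb{E}\sup_{t\le T}|\cdot|^p<\infty$, I would define a Picard operator $\Phi$ that maps a process $\widehat{S}$ to the solution of (\ref{sde2}) with $\pi_{S_t},\eta_{S_t}$ frozen as $\pi_{\widehat{S}_t},\eta_{\widehat{S}_t}$. For two inputs $\widehat{S}^{(1)},\widehat{S}^{(2)}$, the Burkholder–Davis–Gundy inequality together with the Lipschitz estimates (\ref{condition}) (inherited by $\tilde{\sigma}_n,\tilde{\lambda}$ because $f$ and $f^{-1}$ are bounded) yields
\begin{equation*}
\mathbb{E}\sup_{s\le t}|\Phi(\widehat{S}^{(1)})_s-\Phi(\widehat{S}^{(2)})_s|^p \;\le\; C\int_0^t\Bigl(\mathbb{E}\sup_{u\le s}|\widehat{S}^{(1)}_u-\widehat{S}^{(2)}_u|^p\Bigr)\,ds,
\end{equation*}
which becomes a contraction after iterating finitely many times (or on a short interval concatenated across $[0,T]$), and a Grönwall estimate provides the $L^p$-bound $\mathbb{E}\sup_{t\le T}|S_t|^p<\infty$ from linear growth. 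The unique fixed point is the desired solution, and pushing it through the bijection yields $X_t^x\in L^p(P)$.

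The main obstacle will be the mean-field step in the contraction: the differences $|\pi_{\widehat{S}^{(1)}_t}-\pi_{\widehat{S}^{(2)}_t}|$ and $|\eta_{\widehat{S}^{(1)}_t}-\eta_{\widehat{S}^{(2)}_t}|$ must be dominated by $\mathbb{E}\sup_{u\le t}|\widehat{S}^{(1)}_u-\widehat{S}^{(2)}_u|^p$, which is exactly what the Lipschitz hypothesis on $\psi,\xi$ in Assumption \ref{assum1} affords, so that the Grönwall loop closes. A minor technical point is to justify vanishing expectations of the stochastic integrals at the outset without yet knowing the $L^p$-bound; this is handled by first working with an approximation/localisation (stopping times where $|X_t^x|\le N$) and then passing to the limit using the linear-growth control, a routine manoeuvre that I would mention but not grind through.
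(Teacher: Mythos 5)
Your proposal is correct and follows essentially the same route as the paper: you identify $\mathbb{E}[X_t^x]=f(t)$ via the ODE (\ref{ft}), use the rescaling $S_t=xf^{-1}(t)X_t^x$ and It\^o's formula to pass back and forth between (\ref{sde1}) and (\ref{sde2}), and then settle existence, uniqueness and the $L^p$-bounds for (\ref{sde2}) by a standard Picard/Gr\"onwall argument (which the paper simply delegates to the cited references). The only cosmetic difference is that you establish $\rho_{X_t^x}=f(t)$ up front, whereas the paper embeds that expectation argument inside the converse direction.
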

\begin{proof}
\noindent We will prove the assertion in 3 steps.\\
Step 1. Suppose that $X_t ^x$ is a solution to the SDE (\ref{sde1}). By using the Itô formula and Equation (\ref{ft}) , it is obvious that $exp\{-{\int_{0}^{t}{b(s, f(s))ds}}\}X_t ^x= xf^{-1}(t)X_t ^x$ is a solution to the SDE (\ref{sde2}). On the other hand,  assume that $S_t$ is a solution to the SDE (\ref{sde2}) and define ${\tilde{X}_t ^{\tilde{x}} := exp\{{\int_{0}^{t}b(s, f(s))ds}}\}S_t$ with $\tilde{x}=x$. Itô formula deduces that  
\begin{align*}
d{\tilde{X}_t ^{\tilde x}}&=exp\{\int_{0}^{t}b(s, f(s))ds\} d{S_t} + b(t, f(t))exp\{{\int_{0}^{t}{b(s, f(s))}ds}\}{S_t}dt\notag\\
&= b(t,f(t))\tilde{X}_t^{\tilde{x}}dt+\sum_{n=1}^d{\{\sigma_n(t, \tilde{X}_t^{\tilde x}, {\pi}_{\tilde{X}_t^{\tilde x}})\}d{W_t ^n}}\notag\\ 
&+\int_{\mathbb{R}_0}^{} \{ \lambda (t, \tilde{X}_t^{\tilde{x}}, z, \eta_{\tilde{X}_t ^{\tilde x}})\}\tilde{N}(dz,dt), 
\end{align*}
where we used Equation (\ref{ft}) in the last equality. Also, taking expectation from above equality and using the martingale property of two ends integral to derive 
$$\mathbb{E}(X_t ^x)= \int_0^t b(s,f(s))\mathbb{E}(X_s ^x)ds.$$  
Since $\mathbb{E}(X_0)=x=f(0)$ and Equation (\ref{ft}) has a unique solution, we deduce that $\mathbb{E}(X_t ^x)=f(t)$, for every $0 \leq t \leq T$. Therefore, $\tilde{X}_t^{\tilde x}$ satisfies the SDE (\ref{sde1}). \\  
Step 2. According to Assumption \ref{assum1}, the existence of the solution of the SDE (\ref{sde2}) and the boundedness of its $p$-moments can be easily obtained by the Gronwall inequality in the same proof of Theorem 3.1 in \cite{song2022regularity} or Theorem 3.1 in \cite{hao2016mean}, and we omit this proof here. Then the SDE (\ref{sde1}) has a solution and also due to linear growth of $b$ and boundedness of $f$,
\begin{equation*}
\mathbb{E}(X_t^x)^p  = exp\{{\int_{0}^{t}{pb(s, f(s))ds}}\}\mathbb{E}(S_t^p) < \infty.
\end{equation*} 
Step 3. Now we give the proof of uniqueness for the SDE (\ref{sde1}). Consider the two solutions ${X_t}$ and $\hat{X}_t$ to the SDE (\ref{sde1}) with initial conditions $X_0 = x$ and $\hat{X}_0 = \hat{x}$ respectively. We have,
\begin{align*}
\mathbb{E}{\big(}{|X_t ^x - {\hat X}_t ^{\hat x} |}^p{\big)} &= \mathbb{E} \big( {|e_{t, X_t ^x} ^{-1}{S}_t -  e_{t, {\hat{X}_t ^{\hat x}}} ^{-1} \hat{S}_t|^p}\big) \notag\\
&\le \mathbb{E}\big( |e_{t, X_t ^x} ^{-1}{S}_t - e_{t, X_t ^x} ^{-1}{\hat S}_t|^p\big) + \mathbb{E}\big(| e_{t, {X_t ^ x}} ^{-1} \hat{S}_t -e_{t, {\hat{X}_t ^{\hat x}}} ^{-1} \hat{S}_t|^p \big) \notag\\
&\le {|e_{t, X_t ^x}^{-1}|^p}{\mathbb{E} \big({|S_t - \hat{S}_t|^p}\big)} + {|{{e_{t, {X_t ^x}}^{-1}} -  e_{t, {\hat{X}_t ^{\hat x}}}^{-1}}|^p}{\mathbb{E}{\big( |\hat{S}_t|^p \big)}}.
\end{align*}
 Uniqueness of $f$ when $x=\hat x$, and the uniqueness of the solution of (\ref{sde2}) complete the proof.
\end{proof}
 \noindent The following lemma, which will be used frequently in the sequence, can be proved with the same technique of the proof of  Theorem 3.2 in \cite{kunita2004stochastic}, for the solution of SDE (\ref{sde2}). So we omit the proof.
\begin{lemma}\label{t1}
Set ${\tilde{\sigma}} = ({\tilde{\sigma}}_1 , ..., {\tilde{\sigma}}_d)$, then for any $p\ge 2$, there exists a positive constant $C_p$ such that 
\begin{align*}
\mathbb{E}\big(\sup_{0<s\le t} |S(s)|^p \big)\le C_p \Big \{ |x|^p &+ \mathbb{E}\Big( \Big( \int_{0}^{t}{|\tilde{\sigma} (s, S_s , \pi_{S_s })|^2}ds \Big)^{p/2}\Big)\notag\\
& + \mathbb{E}\Big( \Big( \int_{0}^{t}{\int_{\mathbb{R}_0}^{}{|\tilde{\lambda}(s, S_s , z, \eta_{S_s})|^2 \mu(dz)ds}} \Big) ^{p/2} \Big) \notag\\
&+ \mathbb{E} \Big( \int_{0}^{t}{\int_{\mathbb{R}_0}^{}{|\tilde{\lambda}(s, S_s, z, \eta_{S_s})|^p}\mu(dz)ds}\Big) \Big\}.
\end{align*}
\end{lemma}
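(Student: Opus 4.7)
The plan is to derive this moment bound by taking the supremum followed by the $p$-th moment in the integral form of the SDE \eqref{sde2}, and then separately estimating the drift-free martingale contributions from the Brownian and compensated-Poisson integrals. Writing
\[
S_t = x + \int_0^t \tilde{\sigma}(s, S_s, \pi_{S_s})\,dW_s + \int_0^t \int_{\mathbb{R}_0} \tilde{\lambda}(s, S_s, z, \eta_{S_s})\,\tilde{N}(dz, ds),
\]
I would first apply the inequality $|a+b+c|^p \le 3^{p-1}(|a|^p+|b|^p+|c|^p)$ to split $\mathbb{E}\sup_{0<s\le t}|S_s|^p$ into the deterministic initial term $|x|^p$ and two martingale terms to be controlled separately.

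For the Brownian stochastic integral, I would invoke the classical Burkholder-Davis-Gundy inequality component-wise on $\tilde{\sigma} = (\tilde{\sigma}_1,\ldots,\tilde{\sigma}_d)$, obtaining a constant $C_p$ so that
\[
\mathbb{E}\sup_{0<s\le t}\Bigl|\int_0^s \tilde{\sigma}(u,S_u,\pi_{S_u})\,dW_u\Bigr|^p \le C_p\,\mathbb{E}\Bigl(\int_0^t |\tilde{\sigma}(s,S_s,\pi_{S_s})|^2\,ds\Bigr)^{p/2}.
\]
For the compensated Poisson integral the key tool is Kunita's first inequality (the BDG-type bound for purely-discontinuous square-integrable martingales driven by $\tilde{N}$), which yields the characteristic two-term bound
\[
\mathbb{E}\sup_{0<s\le t}\Bigl|\int_0^s\!\!\int_{\mathbb{R}_0}\tilde{\lambda}\,\tilde{N}(dz,du)\Bigr|^p \le C_p\Bigl[\mathbb{E}\Bigl(\int_0^t\!\!\int_{\mathbb{R}_0}|\tilde{\lambda}|^2\mu(dz)\,ds\Bigr)^{p/2} + \mathbb{E}\!\int_0^t\!\!\int_{\mathbb{R}_0}|\tilde{\lambda}|^p\mu(dz)\,ds\Bigr].
\]
The second term (the $L^p$ quantity in $\mu \times ds$) is the genuinely discontinuous contribution that does not appear in the purely Brownian setting, and this is precisely where the proof departs from the Kunita-diffusion case; for $p=2$ the two terms coincide by Itô isometry, but for $p>2$ both are needed. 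Summing the three bounds with a common constant $C_p$ (redefined as needed) produces exactly the claimed estimate.

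The main obstacle is simply quoting Kunita's inequality for the jump part in the exact form required. Since Theorem~3.2 in \cite{kunita2004stochastic} already handles an analogous SDE with the same structural assumptions (the coefficients $\tilde{\sigma}$ and $\tilde{\lambda}$ inherit the linear growth and Lipschitz structure from Assumption~\ref{assum1} via the factor $xf^{-1}(t)$, which is bounded since $f$ is bounded and non-zero), no substantially new argument is required, which is why the authors refer the reader there. No Gr\"onwall step is needed for the statement itself, because the estimate is presented in its ``pre-Gr\"onwall'' form with the coefficient norms still present on the right-hand side.
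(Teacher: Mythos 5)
Your argument is correct and is essentially the proof the paper has in mind: the authors omit the proof and appeal to the technique of Theorem~3.2 in \cite{kunita2004stochastic}, which is exactly the decomposition you use (integral form of \eqref{sde2}, the $3^{p-1}$ splitting, BDG for the Brownian integral, and Kunita's first inequality for the compensated Poisson integral, yielding both the $L^2$-quadratic and the $L^p$ jump terms). Your observations that no Gr\"onwall step is needed and that the coefficients inherit the required structure through the bounded factor $xf^{-1}(t)$ are also consistent with the paper's setup.
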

\noindent and thus from the linear growth of $\tilde{\sigma}$, $\tilde{\lambda}$, $b(t, \rho_{X_t ^x})$ and the boundedness of $f$, for some constant $C$, we conclude that
\begin{equation}\label{equasup}
\mathbb{E}\big( {\sup_{0 \le s \le t}{|X_s ^x|^p}}\big) \le C \mathbb{E}\big({\sup_{0\le s \le t}{|S_s ^x|^p}}\big) < \infty.
\end{equation}

\section{Stochastic flow and weak derivative of the solution}
\noindent In this section, we prove the existence of the stochastic flow of the solution of (\ref{sde1}) as well as its Malliavin differentiability. Then we find a new relation between them to be considered in driving an explicit formula of sensitivity analysis in the next section.

\subsection{Existence of stochastic flow and its expression}
\noindent Here some more assumptions regarding the coefficients need to be made. 
\begin{assumption}\label{assume2}
 Denote ${\partial}_i g= \frac{\partial}{{\partial}{x_i}} g,$ for any $\mathit{g = g(t, x_1, x_2, ..., x_n)}$ with $x_i \in \mathbb{R}^k$, $t \in [0,T]$ and $i= 1, 2, ..., n.$ 
\begin{enumerate}
\item The functions $b(.,.)$ and ${{\sigma}_n}(.,.,.), n=1, ..., d,$ are differentiable with,
\begin{align}\label{eq1}
\|\partial _1 b\|_{\infty} &:= \sup_{y \in \mathbb{R}^k} |\partial _1 b(t, y)| < \infty,\notag\\
\|\partial _i \sigma_n\|_{\infty} &:= \sup_{y,z \in \mathbb{R}^k} |\partial _i \sigma _n(t,y,z)| < \infty, \quad i=1,2, \notag\\
\inf_{x \in \mathbb{R}^k, z \in \mathbb{R}_0}& |det|\mathbf{1}+\partial_1 \lambda(s, x, z, \eta_x)||> 0,
\end{align}
where $det|.|$ denotes the determinant of the matrix and also for every $x,z \in \mathbb{R}^k$, there is a constant $C$ such that, $i=1,2$,
\begin{align}\label{eq3}
|\partial_1 b(t, \rho _{X_t ^x}) - \partial_1 b(t, \rho _{Z_s ^z})| &\le C \Big({|t - s| + \kappa \mathbb{E}|X_t ^x - Z_s ^z|}\Big), \notag\\
 \notag\\
|\partial_i \sigma _n(t, X_t ^x, \pi _{X_t ^x}) - \partial_i \sigma _n(&t, Z_t ^z, \pi _{Z_s ^z})| \notag\\ &\le C\Big({|t - s| + |X_t ^x - Z_s ^z|  + \kappa \mathbb{E}|X_t ^x - Z_s ^z|}\Big).
\end{align}
\item For every $t, s \in [0, T]$ and stochastic processes $X_t ^x$ and $Z_s ^z$ belong to $L^2({P})$ and  $i=1,3$ we have,
\begin{equation}\label{eq2}
\|\partial _i \lambda\|_{\infty} := \sup_{X_t ^x \in \mathbb{R}^k, z \in \mathbb{R}_0} |\partial _i \lambda(t, X_t ^x, z, \eta _{X_t ^x})| < \infty, 
\end{equation}
\begin{align}\label{eq4}
|\partial_i \lambda(t, X_t ^x, z, \eta _{X_t ^x}) &- \partial_i \lambda(t, Z_s ^z, z, \eta _{Z_s ^z})|\notag\\
&\le \varrho(z) C\big({|t - s| + |X_t ^x - Z_s ^z|  + \kappa \mathbb{E}|X_t ^x - Z_s ^z|}\big).
\end{align}
\end{enumerate}
\end{assumption}
\noindent  Recall that for given $\mathit{x} \in \mathit{L}^2 (P)$, the stochastic flow of $\mathit{X}_t ^x$ is defined as,
\begin{equation*}
\frac{\partial}{\partial {x}} X_t ^x := L ^2 - \lim_{\epsilon \to 0} \frac{1}{\epsilon} (X_t ^{x+{\epsilon}x} - X_t ^x).
\end{equation*}
Under the above assumptions, we will show that the flow of the process $X_t$ exists. To do this, we notice that regarding Theorem \ref{XT} we have
\begin{equation}\label{eq16}
\frac{\partial}{\partial x}X_t ^x = \partial_1b(t, \rho _{X_t ^x}) \frac{\partial}{\partial x} \rho _{X_t ^x} e^{{\int_{0}^{t}{b(s, \rho _{X_s ^x})}ds}} S_t ^x + e^{{\int_{0}^{t}{b(s, \rho_{X_s ^x})}ds}}{\frac{\partial}{\partial x}}S_t ^x,
\end{equation}
where $X_0 = S_0 = x$. Therefore, due to the uniqueness of solution  (\ref{sde2}), $b$ has linear growth, $f$ is bounded, and the fact that $\partial_1 b(t, \rho_{X_t ^x})$ is bounded, it is sufficient to demonstrate $\frac{\partial}{\partial x}S_t ^x$ exists and satisfies the following equation,
\begin{align}\label{sde5}
\frac{\partial}{\partial{x}}{S_t ^x} = 1 &+ \sum_{n=1}^{d}{\int_{0}^{t}{\Big( {{\tilde B_s ^{x,n}}{\frac{\partial}{\partial x}S_s ^x} + \tilde \beta _s ^{x,n}}\Big)d{W_s ^n}}}\notag\\
&+ \int_{0}^{t}{\int_{\mathbb{R}_0}^{}{\Big( {{\tilde M_{s,z} ^x}\frac{\partial}{\partial x}S_s ^x + \tilde \gamma _s ^x}\Big)\tilde{N}(dz, ds)}},
\end{align}
where
\begin{align}
&\hspace{10mm}{\tilde B_t ^{x,n}}:=\partial_1{\tilde \sigma_n}(t, S_t ^x, \pi_{S_t ^x}) ,
\hspace{18mm}\tilde \beta _t ^{x,n} := \partial_2{\tilde \sigma_n}(t, S_t ^x, \pi_{S_t ^x})\frac{\partial}{\partial x}\pi_{S_t ^x}, \notag\\
&\hspace{10mm}{\tilde M_{t,z} ^x}:=\partial_1{\tilde \lambda}(t, S_t ^x, z, \eta_{S_t ^x}) ,
\hspace{18mm}\tilde \gamma _t ^x := \partial_3{\tilde \lambda}(t, S_t ^x, z, \eta_{S_t ^x})\frac{\partial}{\partial x}\eta_{S_t ^x} .\notag
\end{align}
Then by the Itô formula, we obtain the SDE whose solution is the flow of $X_t$. Therefore, we will show in the following two propositions that SDE (\ref{sde5}) has a unique solution whose solution is the stochastic flow of $S_t$. A detailed description of these proofs, that are similar to the proof of Theorem 1.1 in \cite{song2022regularity}, can be found in \ref{A1}.\\
\begin{proposition}\label{flowS}
Under Assumption \rm{\ref{assume2}}, for each $t \in [0, T]$  the stochastic flow of $\mathit{S}_t ^x$ exists. 
\end{proposition}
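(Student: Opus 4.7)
The plan is to prove existence of the stochastic flow by (i) constructing a candidate process $Z_t^x$ as the unique $L^2$-solution of the linearized SDE (\ref{sde5}), and (ii) verifying that $\epsilon^{-1}(S_t^{x+\epsilon x}-S_t^x)\to Z_t^x$ in $L^2(P)$ as $\epsilon\to 0$. Observe first that (\ref{sde5}) is itself of mean-field type, because the inhomogeneous terms $\tilde{\beta}_s^{x,n}$ and $\tilde{\gamma}_s^x$ contain $\frac{\partial}{\partial x}\pi_{S_s^x}$ and $\frac{\partial}{\partial x}\eta_{S_s^x}$, which, once existence is known and dominated convergence is justified, equal $\mathbb{E}[\psi'(S_s^x)Z_s^x]$ and $\mathbb{E}[\xi'(S_s^x)Z_s^x]$ respectively. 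Thus $Z_t^x$ is a solution of a linear distribution-dependent SDE coupled through the law of itself.

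For step (i), I would run a Picard iteration: set $Z_t^{(0)}\equiv 1$ and define $Z_t^{(k+1)}$ by substituting $\mathbb{E}[\psi'(S_s^x)Z_s^{(k)}]$ and $\mathbb{E}[\xi'(S_s^x)Z_s^{(k)}]$ for the law-derivatives appearing in (\ref{sde5}). The coefficients $\tilde{B}_s^{x,n}=\partial_1\tilde{\sigma}_n$ and $\tilde{M}_{s,z}^x=\partial_1\tilde{\lambda}$ are uniformly bounded by virtue of (\ref{eq1})--(\ref{eq2}) (using the chain rule through the rescaling in (\ref{tilde}) and the boundedness of $f$), while $\partial_2\tilde{\sigma}_n$, $\partial_3\tilde{\lambda}$, $\psi'$ and $\xi'$ are likewise bounded by the Lipschitz hypothesis in Assumption \ref{assum1}. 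An application of the Burkholder--Davis--Gundy (BDG) inequality together with Jensen's inequality yields a uniform-in-$k$ $L^2$-bound, and the same estimates applied to $Z^{(k+1)}-Z^{(k)}$ give a Gronwall contraction, so that the sequence converges in $L^2(\Omega;C([0,T]))$ to the unique solution $Z_t^x$.

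For step (ii), set $Y_t^\epsilon:=\epsilon^{-1}(S_t^{x+\epsilon x}-S_t^x)-Z_t^x$. Subtracting (\ref{sde2}) at initial data $x+\epsilon x$ and $x$, dividing by $\epsilon$, and writing each coefficient difference via the mean value theorem (integrating $\partial_i\tilde{\sigma}_n$ and $\partial_i\tilde{\lambda}$ along the segment joining $(S_s^x,\pi_{S_s^x})$ and $(S_s^{x+\epsilon x},\pi_{S_s^{x+\epsilon x}})$), one obtains an SDE for $Y_t^\epsilon$ of the same linearized form as (\ref{sde5}) with an additional residual $R_t^\epsilon$. The residual is controlled using the H\"older-type continuity of the derivatives stated in (\ref{eq3}) and (\ref{eq4}), together with the a priori estimate $\mathbb{E}\sup_{s\le t}|S_s^{x+\epsilon x}-S_s^x|^p\le C_p\epsilon^p$ for $p\ge 2$, which follows from Lemma \ref{t1} and Gronwall. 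A further application of BDG followed by Gronwall then delivers $\mathbb{E}\sup_{s\le t}|Y_s^\epsilon|^2\to 0$, identifying $Z_t^x$ with the flow $\frac{\partial}{\partial x}S_t^x$.

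The main obstacle is the handling of the measure-dependent contributions. Justifying the convergence $\frac{\partial}{\partial x}\pi_{S_s^{x+\epsilon x}}\to\mathbb{E}[\psi'(S_s^x)Z_s^x]$ (and its $\eta$-analogue) requires a dominated-convergence argument under the outer expectation, for which I would combine the Lipschitz bound on $\psi$ with uniform $L^p$-control of $S_s^{x+\epsilon x}$ and of $\epsilon^{-1}(S_s^{x+\epsilon x}-S_s^x)$ in some $p>2$ to absorb the non-uniformity arising from the mean-field coupling. An additional subtlety is that the Gronwall step must absorb both an $\mathbb{E}|Y_s^\epsilon|^2$ term coming from the linear part in $Y$ and a $(\mathbb{E}[\psi'(S_s^x)Y_s^\epsilon])^2$ term coming from the law of $Y$; Cauchy--Schwarz and the essential boundedness of $\psi'$ reduce the latter to a constant multiple of the former, so that a single pass of Gronwall closes the estimate.
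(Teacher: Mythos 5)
Your proposal is correct and follows essentially the same two-step strategy as the paper, which distributes the work between this proposition and Proposition \ref{flowS2} (with details in the appendix): first a Picard iteration for the linearized equation \eqref{sde5}/\eqref{sde10} with BDG and a Gronwall-type $\frac{C^m}{m!}$ contraction to produce the candidate $\tilde U_t^x$, then a mean-value-theorem decomposition of $S_t^{x+\epsilon x}-S_t^x$, the a priori bound $\mathbb{E}\big(\sup_{t\le T}|S_t^{x+\epsilon y}-S_t^x|^p\big)\le C_p\mathbb{E}|y|^p\epsilon^p$ (the paper's \eqref{eq20}), and a final BDG--Gronwall pass using \eqref{eq3}--\eqref{eq4} to show $\mathbb{E}\sup_t|S_t^{x+\epsilon x}-S_t^x-\epsilon\tilde U_t^x|^2\to 0$. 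The one genuine difference is your handling of the measure terms: the paper's iteration \eqref{sde4} keeps $\frac{\partial}{\partial x}\pi_{S_s^x}$ and $\frac{\partial}{\partial x}\eta_{S_s^x}$ fixed as exogenous inhomogeneities (implicitly presupposing differentiability of $x\mapsto\mathbb{E}\psi(S_s^x)$ and $x\mapsto\mathbb{E}\xi(S_s^x)$), whereas you close the mean-field loop by writing these as $\mathbb{E}[\psi'(S_s^x)Z_s^x]$ and $\mathbb{E}[\xi'(S_s^x)Z_s^x]$ and iterating on them as well, reducing the law contribution to the state contribution via Jensen/Cauchy--Schwarz and boundedness of $\psi',\xi'$. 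Your version is therefore more self-contained on the distribution dependence, at the price of a slightly heavier contraction estimate; the paper's version is a cleaner linear SDE with given coefficients but leaves the existence of the law-derivatives to be argued separately. Both routes require exactly the control you identify: fourth-moment (or more generally $p>2$) bounds on the increment to absorb the residual after dividing by $\epsilon$, which the paper also uses when it estimates the residual by $\mathbb{E}\sup_s|S_s^{x+\epsilon x}-S_s^x|^4$.
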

\begin{proposition}\label{flowS2}
Under Assumption \rm{\ref{assume2}}, for each $t \in [0, T]$  the unique solution of SDE (\ref{sde5}) is the stochastic flow of $\mathit{S}_t ^x$. 
\end{proposition}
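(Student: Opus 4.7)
The plan is to split the proof into two parts: first establish existence and uniqueness of the linear SDE \eqref{sde5}, then identify its unique solution with the stochastic flow $\frac{\partial}{\partial x}S_t^x$ whose existence is already guaranteed by Proposition \ref{flowS}.

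For existence and uniqueness I would observe that \eqref{sde5} is linear in the unknown. Under Assumption \ref{assume2}, the multiplicative coefficients $\tilde{B}_s^{x,n}$ and $\tilde{M}_{s,z}^x$ are bounded (the latter dominated by $\|\partial_1 \lambda\|_\infty$ times the bounded factor $e_{t,X_t^x}$), and the additive forcing terms $\tilde{\beta}_s^{x,n}$ and $\tilde{\gamma}_s^x$ lie in $L^2$ because $\partial_2\tilde{\sigma}_n$ and $\partial_3 \tilde{\lambda}$ are bounded while the deterministic quantities $\frac{\partial}{\partial x}\pi_{S_s^x}$ and $\frac{\partial}{\partial x}\eta_{S_s^x}$ are finite by Proposition \ref{flowS}. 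A standard Picard iteration together with the Itô isometry, a BDG-type inequality for the compensated Poisson integral, and Gronwall's lemma then yields a unique adapted square-integrable solution which I denote $Y_t$.

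To identify $Y_t$ with the flow, set $\Delta_t^\epsilon := \epsilon^{-1}(S_t^{x+\epsilon} - S_t^x)$ and subtract the SDEs satisfied by $S^{x+\epsilon}$ and $S^x$. Applying the mean value theorem to $\tilde{\sigma}_n$ and $\tilde{\lambda}$ lets me rewrite the resulting equation as
\begin{equation*}
\Delta_t^\epsilon = 1 + \sum_{n=1}^d \int_0^t \big( A_s^{\epsilon,n} \Delta_s^\epsilon + r_s^{\epsilon,n} \big) dW_s^n + \int_0^t \int_{\mathbb{R}_0} \big( C_{s,z}^\epsilon \Delta_s^\epsilon + \tilde{r}_{s,z}^\epsilon \big) \tilde{N}(dz,ds),
\end{equation*}
where $A^{\epsilon,n}, C^\epsilon$ are segment-averaged versions of $\tilde{B}^{x,n}, \tilde{M}^x$ and $r^{\epsilon,n}, \tilde{r}^\epsilon$ encode the mean-field increments. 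Using continuity of the partial derivatives (Assumption \ref{assume2}) together with the $L^p$-continuity $\epsilon \mapsto S_t^{x+\epsilon}$ furnished by Lemma \ref{t1} and the Lipschitz hypotheses, I would show $A^{\epsilon,n} \to \tilde{B}^{x,n}$ and $C^\epsilon \to \tilde{M}^x$ in $L^2$, while $r^{\epsilon,n} \to \tilde{\beta}^{x,n}$ and $\tilde{r}^\epsilon \to \tilde{\gamma}^x$. Writing the SDE satisfied by $Y_t - \Delta_t^\epsilon$ and invoking Gronwall's inequality then gives $\sup_{0 \le t \le T}\mathbb{E}|Y_t - \Delta_t^\epsilon|^2 \to 0$, and combining this with the $L^2$ convergence of $\Delta_t^\epsilon$ to the stochastic flow supplied by Proposition \ref{flowS} completes the identification.

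The hard part will be handling the mean-field forcing terms, specifically justifying $\epsilon^{-1}(\pi_{S_t^{x+\epsilon}} - \pi_{S_t^x}) \to \frac{\partial}{\partial x}\pi_{S_t^x} = \mathbb{E}\bigl[\psi'(S_t^x)\frac{\partial}{\partial x}S_t^x\bigr]$ and its analogue for $\eta$. Because these are expectations of $\psi(S_t^{x+\epsilon})$, the interchange of limit and expectation requires a uniform integrability argument grounded in the $L^p$-bounds of Lemma \ref{t1} for some $p>2$, coupled with the pointwise a.s.\ convergence of $\epsilon^{-1}(\psi(S_t^{x+\epsilon}) - \psi(S_t^x))$. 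Note that the absence of a chain rule for the Malliavin derivative used elsewhere in the paper is not an obstacle here, since $\frac{\partial}{\partial x}$ is a classical parameter derivative and the ordinary chain rule applies.
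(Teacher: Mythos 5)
Your proposal is correct and follows essentially the same route as the paper: existence and uniqueness of the linear SDE \eqref{sde5} by Picard iteration with BDG/Gronwall estimates, then a mean-value-theorem representation of the increment $S_t^{x+\epsilon y}-S_t^x$ with segment-averaged coefficients, and a Gronwall comparison to identify the solution with the flow. The only cosmetic difference is that the paper estimates $S_t^{x+\epsilon x}-S_t^x-\epsilon \tilde U_t^x$ directly, controlling the coefficient-difference terms through the quantitative bound $\mathbb{E}\bigl(\sup_{t}|S_t^{x+\epsilon y}-S_t^x|^4\bigr)\le C\epsilon^4$ from Lemma \ref{t1} and the Lipschitz conditions \eqref{eq3}--\eqref{eq4}, whereas you divide by $\epsilon$ and argue qualitative $L^2$-convergence of the averaged coefficients and forcing terms; both versions of the bookkeeping close the argument.
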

\noindent  To succeed, we first note that according to (\ref{tilde}), the following relations satisfy.  
\begin{align}
{B_t ^{x,n}}& :=\partial_1{\sigma_n}(t, X_t ^x, \pi_{X_t ^x})=\tilde{B}_t ^{x,n} ,
  \hspace{10mm} \beta _t ^{x,n} := \partial_2{\sigma_n}(t, X_t ^x, \pi_{X_t ^x})\frac{\partial}{\partial x}\pi_{X_t ^x}=\tilde{\beta} _t ^{x,n}, \notag\\
{M_{t,z} ^x}&:=\partial_1{\lambda}(t, X_t ^x, z, \eta_{X_t ^x})=\tilde{M}_{t,z} ^x ,
\hspace{8mm}\gamma _t ^x := \partial_3{\lambda}(t, X_t ^x, z, \eta_{X_t ^x})\frac{\partial}{\partial x}\eta_{X_t ^x}=\tilde{\gamma}_t ^x .\notag
\end{align}
\begin{theorem}\label{flow}
Under Assumption \rm{\ref{assume2}}, for each $t \in [0, T]$ the stochastic flow of $\mathit{X}_t ^x$ exists and satisfies the following equation,
\begin{align}\label{sde3}
\frac{\partial}{\partial{x}}{X_t ^x} = 1 &+ \int_{0}^{t}{\Big( {{A_s ^x}\frac{\partial}{\partial x}X_s ^x + \alpha _s ^x}\Big)ds} + \sum_{n=1}^{d}{\int_{0}^{t}{\Big( {{B_s ^{x,n}}\frac{\partial}{\partial x}X_s ^x + \beta _s ^{x,n}}\Big)d{W_s ^n}}}\notag\\
&+ \int_{0}^{t}{\int_{\mathbb{R}_0}^{}{\Big( {{M_{s,z} ^x}\frac{\partial}{\partial x}X_s ^x + \gamma _s ^x}\Big)\tilde{N}(dz, ds)}},
\end{align}
where 
\begin{align}
{A_s ^x}:=b(s, \rho_{X_s ^x}), \qquad \alpha _s ^x := \partial_1 b(s, \rho_{X_s ^x}){X_s ^x}\frac{\partial}{\partial x}\rho_{X_s^x}. \notag
\end{align}
\end{theorem}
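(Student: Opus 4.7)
The plan is to lift Propositions \ref{flowS} and \ref{flowS2}, which already handle the auxiliary process $S_t^x$, up to $X_t^x$ via the transformation $X_t^x = \exp\{\int_0^t b(s,\rho_{X_s^x})\,ds\}\,S_t^x$ established in the proof of Theorem \ref{XT}. Since $\rho_{X_s^x}=f(s)$ is a deterministic function of $x$, and since both $f$ (by the hypothesis on \eqref{ft}) and $\partial_1 b$ (by Assumption \ref{assume2}) are bounded, the exponential prefactor and its $x$-derivative admit $L^\infty$ bounds in $(\omega,t)$. Combined with the $L^2$-existence of $\partial_x S_t^x$ from Proposition \ref{flowS}, relation \eqref{eq16} then yields $\partial_x X_t^x$ as an $L^2$-limit of difference quotients, establishing existence of the stochastic flow.

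To derive SDE \eqref{sde3}, I would apply Itô's product rule to $X_t^x = \exp\{\int_0^t b(s,f(s))\,ds\}\,S_t^x$, combining the SDE \eqref{sde5} for $\partial_x S_t^x$ with differentiation of the deterministic exponential factor through the ODE $\dot f(t)=b(t,f(t))f(t)$. The key simplification lies in the identifications recorded immediately before the statement: the rescaling $xf^{-1}(t)$ in \eqref{tilde} is designed precisely so that the chain rule gives $\partial_1\tilde{\sigma}_n = B_t^{x,n}$, $\partial_1\tilde{\lambda} = M_{t,z}^x$, $\tilde{\beta}=\beta$, $\tilde{\gamma}=\gamma$, which converts the coefficients of \eqref{sde5} into those of \eqref{sde3}. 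Equivalently, and more transparently, one differentiates \eqref{sde1} directly term-by-term in $x$: the drift $b(t,\rho_{X_t^x})X_t^x$ contributes $A_t^x\,\partial_x X_t^x + \alpha_t^x$ by the product and chain rules, the diffusion $\sigma_n(t,X_t^x,\pi_{X_t^x})$ contributes $B_t^{x,n}\,\partial_x X_t^x + \beta_t^{x,n}$, and the jump coefficient $\lambda(t,X_t^x,z,\eta_{X_t^x})$ contributes $M_{t,z}^x\,\partial_x X_t^x + \gamma_t^x$, producing exactly \eqref{sde3}.

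The hard part will be rigorously justifying the interchange of the $x$-derivative with the stochastic integrals under the mean-field dependence carried by $\rho_{X_t^x}$, $\pi_{X_t^x}$, $\eta_{X_t^x}$, and in particular controlling the compensated jump integral. This calls for uniform $L^2$-bounds on the difference quotients $\epsilon^{-1}(X_t^{x+\epsilon x}-X_t^x)$ through a Gronwall-type estimate in the spirit of Appendix \ref{A1}, together with dominated convergence supported by the integrability of $\varrho$ in \eqref{condition} and the boundedness assumptions \eqref{eq1}--\eqref{eq2}. Once these uniform bounds are in hand, passing to the limit inside the SDE and invoking uniqueness for the resulting linear SDE with Lipschitz coefficients (guaranteed by \eqref{eq3}--\eqref{eq4}) identifies $\partial_x X_t^x$ with the unique solution of \eqref{sde3}, completing the proof.
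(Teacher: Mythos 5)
Your proposal is correct and follows essentially the same route as the paper: the paper's proof of Theorem \ref{flow} is precisely to combine relation \eqref{eq16} (the transformation $X_t^x=e^{\int_0^t b(s,\rho_{X_s^x})ds}S_t^x$ from Theorem \ref{XT}) with the flow of $S_t^x$ furnished by Propositions \ref{flowS} and \ref{flowS2}, the coefficient identifications $\tilde B=B$, $\tilde\beta=\beta$, $\tilde M=M$, $\tilde\gamma=\gamma$, and the It\^o formula. The uniform difference-quotient and Gronwall estimates you flag as "the hard part" are exactly what the paper delegates to the proofs of Propositions \ref{flowS}--\ref{flowS2} in \ref{A1}, so no additional work is needed at the level of $X_t^x$.
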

\begin{proof}
It is proven by the Itô formula and (\ref{eq16}). 
\end{proof}
\noindent In the sequel, we state some expressions for the stochastic flow of ${X_t}$. Let $Y_t ^x$
is the solution of the homogeneous SDE (\ref{sde3}), $\mathit{i.e.}$,\\
\begin{equation}\label{Y}
d{Y_t ^x}={A_t ^x}{Y_t ^x} dt + {\sum_{n=1}^d B_t ^{x,n}}{Y_t ^x}d{W_t ^{n}} + \int_{{\mathbb{R}_{0}}}^{} {M_{t,z} ^ x}{Y_t ^x}{\tilde{N}}\, (dz, dt).
\end{equation}
Itô formula deduce that ${\mathit{Y_t ^x}} =\mathit{ e} ^{\mathit{H}(\mathit{t})}$, where the process $H(t)$ satisfy 
\begin{align*}
&H(t)=\int_{0}^{t}\big ( {A_s ^x - \frac{1}{2} {\sum_{n=1}^d B _s ^{x,n}}} + \int_{\mathbb{R}_{0}}^{}( ln(\mathbf{1}+{M_{s,z} ^x})-{M_{s, z} ^x})\, {\mu}(dz)\big)\, ds \notag\\
&{\hspace{9mm}}+ \sum_{n=1}^d {\int_{0}^{t} { B_s ^{x,n}}\, dW_s ^{n}}+{\int_{0}^{t} {\int_{\mathbb{R}_{0}}^{} {ln(\mathbf{1}+ {M_{s,z} ^x})}\, }\, {\tilde{N}(dz,ds)}}.
\end{align*}
According to (\ref{eq1}), (\ref{eq2}) and Gronwall inequality it is common to show that for every $p \ge 1$, the process $Y_t ^x $ belongs to $L^p$. In fact, there exists a constant $C_p$ such that,
\begin{equation}\label{ytp}
\mathbb{E}\Big(\sup_{t \in [0, T]} |Y_t ^x|^p \Big) \le C_p.
\end{equation}
For more details see \cite{menaldi2008stochastic}. Also, from Assumption \ref{assum1}, for each $T>0$ and $p \ge 2$ there exists $R= R(T, p)$ such that 
\begin{equation}\label{eq15}
|{A_t ^x}| + |{B_t ^{x,n}}| + {\int_{\mathbb{R} _0}^{}{{|{M_{t,z} ^x}|}}^p} \mu (dz) \le R,
\end{equation}
and
\begin{equation}\label{eq11}
\mathbb{E}\Big{\{} \Big({{\int_{0}^{T}{|\alpha _t ^x |}ds}}\Big) ^p + \sum_{n=1}^d {\Big( {\int_{0}^{T}{{{|\beta _t ^{x,n}|}^2 ds}}  \Big)^{p/2}} + \Big( {\int_{0}^{T}{\int_{\mathbb{R} _0}^{}{{|\gamma _t ^x|}^2 \mu(dz)ds}}}}\Big)^{p/2} \Big{\}} \le R.
\end{equation}
\noindent Therefore the conditions of Lemma 5.19 in \cite{menaldi2008stochastic} hold and it results that for every $\mathit{t} \in [0, T]$, $\mathit{Y_t ^x}$ is $\mathit{P}$-a.s. invertible and
\begin{equation}\label{equa1}
{{det {\hspace{0.75mm}}   { {(Y_t ^{x})}^{-1}}} \in {\bigcap_{p \ge 1}^{}{{L^p}(\Omega)}}}.
\end{equation}
and also
\begin{align*}
&(Y_t ^x)^{-1} = \mathbf{1} - {\int_{0}^{t} (Y_t ^x)^{-1} \tilde{A_s} ds} - \sum_{n=1}^{d}{\int_{0}^{t}{(Y_s ^x)^{-1} B_s ^{x,n}}d W_s ^{n}} \notag\\
&{\hspace{17mm}} - {\int_{0}^{t}{\int_{{\mathbb{R}}_0}^{}{(Y_s ^x)^{-1}} M_{s,z} ^x ( \mathbf{1} + M_{s,z} ^x)^{-1}}\tilde{N}(dz,ds)},
\end{align*}
where
\begin{equation*}
\tilde{A_t}:= A_t ^x -{\sum_{n=1}^{d}(B_t ^{x,n})^2}- {\int_{{\mathbb{R}}_0}^{} (M_{t,z} ^x)^2 (\mathbf{1} + M_{t,z} ^x)^{-1} \mu(dz)}.
\end{equation*}
\noindent Now, using the technique inspired of \cite{banos2018bismut} we can get an expression for $\frac{\partial}{\partial{x}}{X_t ^x}$ of a mean-field SDE with jumps.
\begin{lemma}\label{lem1}
For every $t \in [0,T]$, there exists a stochastic process  $u(t)$ such that 
\begin{equation}\label{eq7}
 {\frac{\partial}{\partial{x}}{X_t ^x}}={Y_t ^x}{u(t)}. \\
\end{equation}
\end{lemma}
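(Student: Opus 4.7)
The plan is to apply the classical variation-of-parameters method adapted to linear SDEs with jumps. Since $Y_t^x$ satisfies the homogeneous linear SDE (\ref{Y}) whose coefficients $A_t^x, B_t^{x,n}, M_{t,z}^x$ coincide with the linear part of the inhomogeneous SDE (\ref{sde3}) governing $\frac{\partial}{\partial x} X_t^x$, we expect a multiplicative correction $u(t)$ such that $Y_t^x u(t)$ reproduces (\ref{sde3}). The existence part is in fact almost immediate: by (\ref{equa1}), $Y_t^x$ is $P$-a.s.\ invertible with $\det (Y_t^x)^{-1} \in \bigcap_{p\ge 1} L^p(\Omega)$, so one simply defines
$$u(t) := (Y_t^x)^{-1}\,\frac{\partial}{\partial x} X_t^x,$$
and (\ref{eq7}) holds by construction.

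To make the representation useful in subsequent sections, the key step is to derive an explicit SDE for $u(t)$. I would apply the It\^o product rule for semimartingales with jumps to $Y_t^x u(t)$, combined with the explicit Dol\'eans-type expression for $(Y_t^x)^{-1}$ recalled just before the statement. Matching the drift, Brownian, and Poisson coefficients of $d(Y_t^x u(t))$ with those of (\ref{sde3}) determines $u$ uniquely and yields
\begin{align*}
du(t) &= (Y_t^x)^{-1}\Bigl[\alpha_t^x - \sum_{n=1}^{d} B_t^{x,n}\beta_t^{x,n} - \int_{\mathbb{R}_0} M_{t,z}^x\bigl(\mathbf{1}+M_{t,z}^x\bigr)^{-1}\gamma_t^x\,\mu(dz)\Bigr]\,dt \\
&\quad + \sum_{n=1}^{d} (Y_t^x)^{-1}\beta_t^{x,n}\,dW_t^n + \int_{\mathbb{R}_0} (Y_{t-}^x)^{-1}\bigl(\mathbf{1}+M_{t,z}^x\bigr)^{-1}\gamma_t^x\,\tilde N(dz,dt),
\end{align*}
with initial condition $u(0)=1$.

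The final task is to check that $u(t)$ is a well-behaved $L^2$ process. The uniform bounds (\ref{eq15}) and (\ref{eq11}) on the coefficients, the $L^p$ estimate (\ref{ytp}) for $Y_t^x$, the integrability (\ref{equa1}) of $(Y_t^x)^{-1}$, and the uniform lower bound $\inf|\det(\mathbf{1}+\partial_1 \lambda)|>0$ from Assumption~\ref{assume2} guarantee that $(\mathbf{1}+M_{t,z}^x)^{-1}$ is uniformly controlled; H\"older and the Burkholder--Davis--Gundy inequality then bound each integral. The main obstacle is the jump cross-variation in the product rule: the joint jump $\Delta Y_t^x\,\Delta u(t)$ contributes \emph{both} a compensator (feeding into the drift of $u$) and a compensated Poisson integral, which is precisely what forces the $(\mathbf{1}+M_{t,z}^x)^{-1}$ factor inside the $\tilde N$-integrand. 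Getting this jump correction right, together with the Brownian cross-term $-\sum_n B_t^{x,n}\beta_t^{x,n}$ appearing in the drift, is the essential computational step of the argument.
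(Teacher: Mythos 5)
Your proposal is correct and takes essentially the same route as the paper: the paper defines $u(t)$ as the solution of exactly the SDE you derived, with drift $(Y_t^x)^{-1}\big(\alpha_t^x-\sum_{n=1}^d \beta_t^{x,n}B_t^{x,n}-\int_{\mathbb{R}_0} M_{t,z}^x(\mathbf{1}+M_{t,z}^x)^{-1}\gamma_t^x\,\mu(dz)\big)$, Brownian coefficient $(Y_t^x)^{-1}\beta_t^{x,n}$ and jump coefficient $(Y_t^x)^{-1}(\mathbf{1}+M_{t,z}^x)^{-1}\gamma_t^x$, then applies the It\^o product rule to $Y_t^x u(t)$ and concludes by uniqueness of solutions to (\ref{sde3}). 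You merely run the identical computation in the opposite direction, setting $u(t)=(Y_t^x)^{-1}\frac{\partial}{\partial x}X_t^x$ via the invertibility (\ref{equa1}) and reading off the SDE, with the same essential steps (the Brownian cross-term $-\sum_n B_t^{x,n}\beta_t^{x,n}$ and the jump cross-variation forcing the $(\mathbf{1}+M_{t,z}^x)^{-1}$ factor), so no gap.
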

\begin{proof}
\noindent Let us define the process $u(t)$ as the solution of the following SDE
\begin{align}\label{eq8}
du(t)={A_t ^*} {(Y_t ^x)}^{-1} dt + \sum_{n=1}^{d} \beta_t^{x,n}{( Y_t ^x)}^{-1} d{W_t ^n} + \int_{{\mathbb{R}}_{0}}^{}  M_{t,z} ^* {(Y_t ^x)}^{-1}\,{\tilde{N}}(dz,dt),
\end{align}
where
\begin{align*}
&A_t ^*={\alpha}_t ^{x} -\sum_{n=1}^d{\beta}_t ^{x,n} {B}_t ^{x,n} - {\int_{{\mathbb{R}}_{0}}^{} {\frac{{M_{t,z} ^{x}}{{\gamma}_t ^{x}}}{\mathbf{1}+ {M_{t,z} ^{x}}}}\,{\mu(dz)}}, \qquad M_{t,z}^*={\frac{{\gamma}_t ^{x}}{\mathbf{1}+{M_{t,z} ^{x}}}}.
\end{align*}
Applying the Itô formula to derive
\begin{align*}
d(Y_t ^{x} u_t)&={A_t ^{x}}{Y_t ^{x}} u_t dt + \sum_{n=1}^{d} {B_t ^{x,n}} {Y_t ^{x}}  u_t d{W_t ^n} + {\int_{{\mathbb{R}}_{0}}^{} {M_{t,z} ^{x}}{Y_t ^{x}} u_t\, {\tilde{N}}(dz,dt)} \notag\\
&+{A_ t ^{*}}dt + \sum_{n=1}^d {\beta_t ^{x,n}} d{W_t ^n}+{\int_{{\mathbb{R}}_{0}}^{} {M_{t,z}^{*}}{{\tilde{N}}(dz,dt)}}\,  + \sum_{n=1}^d {B_t ^{x,n}}\beta_t ^{x,n}dt \notag\\
&+{\int_{{\mathbb{R}}_{0}}^{} {M_{t,z} ^{x}}{M_{t,z} ^{*}}\, {{\tilde{N}}(dz,dt)}}+{\int_{{\mathbb{R}}_{0}}^{} {M_{t,z} ^{x}}{M_{t,z} ^{*}}\, {\mu (dz)dt}} \notag\\
&=\Big({A_t ^x}Y_t ^{x} u_t+ \alpha _t^x\Big)dt+ \sum_{n=1}^{d}\Big({{B_t ^{x,n}}Y_t ^{x} u_t+ \beta _t ^{x,n}}\Big)d{W_t ^n}\notag\\
&+ {\int_{\mathbb{R}_0}^{}{\Big({{M_{t,z} ^x}Y_t ^{x} u_t+ \gamma _t ^x}\Big)\tilde{N}(dz, ds)}}.
\end{align*}
Now, the uniqueness property results in that 
$Y_t ^{x} u(t)= {\frac{{\partial}}{\partial{x}}}{X_t ^{x}}$.\\
\end{proof}
\begin{corollary}\label{coro}
The stochastic flow $\frac{{\partial}}{\partial{x}}X_t ^{x}$ of the SDE ({\ref{sde3}}) belongs uniformlly to $L^p(P)$  for any $p \ge 1$, i.e., 
$$\mathbb{E} \Big(\sup_{0 \leq t \leq T} \vert \frac{{\partial}}{\partial{x}}X_t ^{x} \vert^p \Big)< \infty.$$
Indeed, from the same technique of the proof of Theorem 3.2 in \cite{kunita2004stochastic}, in connection to \eqref{eq15}, \eqref{eq11}, and \eqref{eq8} we derive that for any $p \ge 1$, 
\begin{align*}
&\mathbb{E} {\Big(\sup_{t \in [0, T]} |u(t)|^p \Big)}\\
&  \le C_p\Bigg\{ |u_0|^p + \mathbb{E}\Big({ \big({\int_{0}^{T}{| {A^*}(Y_s ^x)^{-1}|} ds}  \big)^{p}} \Big)+ \mathbb{E}\Big({ \big({\int_{0}^{T}{|\sum_{n=1}^{d}{{\beta}_t ^{x,n}}(Y_s ^x)^{-1}|} ds}  \big)^{p}} \Big) \notag\\ 
 &+ \mathbb{E}\Big({ \big({\int_{0}^{T}{\int_{\mathbb{R}_0}^{}{|M_{s,z} ^* (Y_s ^x)^{-1}|}^2}\mu(dz) ds}  \big)^{p/2}}\Big) \notag\\ 
& + \mathbb{E}\Big({{\int_{0}^{T}{\int_{\mathbb{R}_0}^{}{|M_{s,z} ^* (Y_s ^x)^{-1}|}^p}\mu(dz) ds}}\Big) \Bigg\},\notag
\end{align*}
\noindent by utilizing equations \eqref{eq1}, \eqref{eq11}, and \eqref{equa1}, we can conclude that the solution of the stochastic differential equation (SDE) given in \eqref{sde3} belongs to $L^p(P)$  for any $p \ge 1$ and 
\begin{align*}
\mathbb{E} {\Big(\sup_{t \in [0, T]} |u(t)|^p \Big)}  \le \infty.
\end{align*}

\end{corollary}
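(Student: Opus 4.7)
My plan is to start from the multiplicative decomposition $\frac{\partial}{\partial x}X_t^x = Y_t^x u(t)$ established in Lemma \ref{lem1} and control each factor separately. Since \eqref{ytp} already delivers $\mathbb{E}\big(\sup_{t\in[0,T]} |Y_t^x|^q\big) \leq C_q$ for every $q \geq 1$, a Cauchy-Schwarz (or more generally Hölder) application in the form
\begin{equation*}
\mathbb{E}\Big(\sup_{0\leq t\leq T}|Y_t^x u(t)|^p\Big) \leq \Big(\mathbb{E}\sup_{t}|Y_t^x|^{2p}\Big)^{1/2}\Big(\mathbb{E}\sup_t |u(t)|^{2p}\Big)^{1/2}
\end{equation*}
reduces the problem to proving $\mathbb{E}\big(\sup_{t\in[0,T]}|u(t)|^{q}\big) < \infty$ for arbitrary $q\geq 1$. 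This is exactly the estimate sketched in the statement of the corollary.

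For the process $u(t)$, I would apply the Kunita-type maximal inequality for SDEs driven by a Brownian motion and a compensated Poisson random measure (Theorem 3.2 in \cite{kunita2004stochastic}) to equation \eqref{eq8}. This produces the four displayed terms: a pathwise drift contribution involving $A_t^*(Y_t^x)^{-1}$, a Brownian BDG term involving $\sum_n \beta_t^{x,n}(Y_t^x)^{-1}$, and two jump contributions (one of order $2$, one of order $p$) involving $M_{t,z}^*(Y_t^x)^{-1}$. Each of these four integrands is a product of two factors whose moments are controlled separately.

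To close the estimate I would split each product via Hölder's inequality into an $(Y_t^x)^{-1}$ piece, handled by \eqref{equa1}, which guarantees $\det(Y_t^x)^{-1} \in \bigcap_{p\geq 1}L^p(\Omega)$ and hence $\sup_t|(Y_t^x)^{-1}|$ in every $L^p$, and a piece involving $A_t^*$, $\beta_t^{x,n}$, or $M_{t,z}^*$. The latter factors are assembled from $\alpha_t^x$, $\beta_t^{x,n}$, $\gamma_t^x$, $A_t^x$, $B_t^{x,n}$, and $M_{t,z}^x$, all of which are uniformly controlled by the bounds \eqref{eq15} and the moment estimate \eqref{eq11}, together with the pointwise bound $|M_{t,z}^*| \leq |\gamma_t^x|/\inf_{x,z}|\mathbf{1}+M_{t,z}^x|$ coming from the non-degeneracy condition in \eqref{eq1}.

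The main obstacle is the coupling between $u(t)$ and $(Y_t^x)^{-1}$ inside the SDE \eqref{eq8}: the coefficients driving $u$ are themselves random and contain $(Y_t^x)^{-1}$, so naive Gronwall on $\mathbb{E}|u(t)|^p$ is not available without first decoupling. The resolution is to invoke the Kunita estimate pathwise, which only requires the relevant integrals of the coefficients to be finite in expectation, and then separate $(Y_t^x)^{-1}$ from the remaining factor via Hölder before quoting \eqref{equa1} and \eqref{eq11}. Once $\mathbb{E}(\sup_t|u(t)|^{q})<\infty$ is secured for every $q\geq 1$, plugging back into the decomposition $\frac{\partial}{\partial x}X_t^x = Y_t^x u(t)$ and using \eqref{ytp} finishes the proof.
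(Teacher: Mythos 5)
Your proposal is correct and follows essentially the same route as the paper: decompose $\frac{\partial}{\partial x}X_t^x = Y_t^x u(t)$ via Lemma \ref{lem1}, bound $\mathbb{E}\big(\sup_t|u(t)|^p\big)$ by applying the Kunita (Theorem 3.2) maximal estimate to \eqref{eq8} and controlling the resulting integrands through Hölder splitting of the $(Y_s^x)^{-1}$ factor using \eqref{equa1}, \eqref{eq15}, \eqref{eq11} and \eqref{eq1}, and then combine with \eqref{ytp} to conclude. Your explicit Cauchy--Schwarz step for the product $Y_t^x u(t)$ and the pointwise bound on $M_{t,z}^*$ simply spell out details the paper leaves implicit.
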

\begin{lemma}\label{uboundd}
Stochastic process  $u(t)$ belongs to $Dom{\hspace{0.2mm}}\delta$.  
\end{lemma}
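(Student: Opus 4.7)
The plan is to check $u \in \mathbb{L}^{1,2}$, since the Section~2 lemma then ensures $u \in \mathrm{Dom}\,\delta$. Treating the one-parameter process $u(t)$ as a two-parameter process constant in $z$, this splits into three conditions: $u \in L^2(P \times \lambda \times \mu)$; the random variable $u(t)$ lies in $\mathbb{D}^{1,2}$ for $\lambda\otimes\mu$-a.e.\ $(t,z)$; and $Du \in L^2(P \times (\lambda \times \mu)^2)$.

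The first condition is immediate from Corollary~\ref{coro}, which gives $\mathbb{E}(\sup_{0 \le t \le T}|u(t)|^2) < \infty$, combined with the finiteness of $\mu$ on $\mathbb{R}_0$. For the other two, I would differentiate the integral form of~\eqref{eq8}. This first requires showing that $X_t^x$, $Y_t^x$, and $(Y_t^x)^{-1}$ lie in $\mathbb{D}^{1,2}$ with $L^p$-bounded derivatives for every $p \ge 1$; the differentiability of $X_t^x$ and $Y_t^x$ follows from Propositions~\ref{pro22}--\ref{pro23} applied to their defining SDEs under Assumption~\ref{assume2}, and that of $(Y_t^x)^{-1}$ from the explicit SDE it satisfies (displayed just before Lemma~\ref{lem1}) together with the uniform lower bound on $|\det(\mathbf{1}+\partial_1\lambda)|$ in~\eqref{eq1}. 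The integrands $A_s^*(Y_s^x)^{-1}$, $\beta_s^{x,n}(Y_s^x)^{-1}$, $M_{s,z}^*(Y_s^x)^{-1}$ are then Malliavin differentiable as products and compositions of such processes. Applying Proposition~\ref{derjump} (and its Wiener analog) term by term in~\eqref{eq8} yields a linear integral equation for $D_{r,\zeta}u(t)$, and a Gronwall-type argument in the spirit of Lemma~\ref{t1} produces $\mathbb{E}(\sup_{0 \le t \le T}|D_{r,\zeta}u(t)|^2) \le K$ uniformly in $(r,\zeta)$. Integrating against $dr\,\mu(d\zeta)$ and using finiteness of $\mu$ then delivers $Du \in L^2(P \times (\lambda \times \mu)^2)$.

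The main obstacle will be propagating Malliavin differentiability through the nonlinear quantity $M_{s,z}^* = \gamma_s^x/(\mathbf{1}+M_{s,z}^x)$. Because Proposition~\ref{pro23} provides the jump chain rule only in the difference-quotient form $D_{r,\zeta}\varphi(F) = \varphi(F+D_{r,\zeta}F) - \varphi(F)$ rather than the classical $\varphi'(F)D_{r,\zeta}F$, controlling $D_{r,\zeta}(\mathbf{1}+M_{s,z}^x)^{-1}$ requires the strict invertibility bound in~\eqref{eq1} together with the Lipschitz estimates~\eqref{eq3}--\eqref{eq4}; this is the point where the assumptions of the section are used most sharply, and it is what keeps every resulting integrand in $L^2$.
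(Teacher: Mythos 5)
Your proposal is correct and follows essentially the same route as the paper: both reduce membership in $\mathrm{Dom}\,\delta$ to Malliavin differentiability of $u$ (via the inclusion of $\mathbb{L}^{1,2}$, resp.\ $\mathbb{D}^{1,2}$, in $\mathrm{Dom}\,\delta$), which in turn comes down to differentiability of the integrands in \eqref{eq8}, with $(Y_s^x)^{-1}$ as the only delicate factor. The sole divergence is technical: where you differentiate the SDE for $(Y^x)^{-1}$ and run Gronwall-type estimates for $D_{r,\zeta}u$, the paper exploits the closed form $(Y_s^x)^{-1}=e^{-H(s)}$ with $D_{t,z}H(s)=\ln(\mathbf{1}+M_{t,z}^x)$ to obtain $D_{t,z}(Y_s^x)^{-1}=(Y_s^x)^{-1}\bigl(e^{-D_{t,z}H(s)}-1\bigr)$ directly (equation \eqref{inverseddy}, combined with \eqref{equa1} and \eqref{eq15}), which bypasses those iterative bounds.
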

\begin{proof}
\noindent It is sufficient to show the random variables,
\begin{align*}
&F:={\int_{0}^{t} {\Big((Y_s^{x})^{-1}({{\alpha}_s ^{x}}- \sum_{n=1}^d {\beta _s ^{x,n}}{B_s ^{x,n}}-{\int_{{\mathbb{R}}_{0}}^{} {\frac{{M_{s,z}^x}{\gamma _s ^x}}{\mathbf{1}+{M_{s,z} ^x}}}\, {\mu (dz)}})\Big)}\, ds}, \notag\\
&G:=\sum_{n=1}^d \int_{0}^{t} {(Y_s^x)^{-1} {\beta _s ^{x,n}}}\, dW_s ^n, \notag\\
&K:=\int_{0}^{t} {\int_{\mathbb{R}_0}^{} {(Y_s ^{x})^{-1} {\frac{\gamma _s ^x}{\mathbf{1}+ {M_{s,z} ^{x}}}}}\, }\, {\tilde{N}}(dz,ds),
\end{align*}
belong to $Dom \delta$.
 \noindent Due to ${\mathbb{D}}^{1,2} {\subset} {Dom{\delta}}$ {\cite{nualart2018introduction}}, and also ${{\alpha}_s ^x}, {{\beta}_s ^{x,n}}, {{\mathit{B}}_s ^{x,n}}$ and ${\mathit{M}}_{s,z} ^x$ are Malliavin differentiable for every $\mathit{s} \in {{[0, T]}}$, it is enough to show that $\mathit{(Y_s ^x)}^{-1} \in {\mathbb{D}}^{1,2}$. Since $\mathit{(Y_s ^x)}^{-1} = \exp\{-H(s)\}$ and $H(s)$ is Malliavin differentiable with $D_{t,z}H(s)= ln(1+M_{t,z}^x)$, then from (\ref{equa1}) and (\ref{eq15}), 
 \begin{equation}\label{inverseddy}
 D_{t,z}\mathit{(Y_s^x)}^{-1}=\mathit{(Y_s ^x)}^{-1} \Big(e^{-D_{t,z}H(s)}-1\Big) \in L^2(P \times \mu),
\end{equation}
and therefore $\mathit{(Y_t ^x)}^{-1} \in {\mathbb{D}}^{1,2}$ for every $\mathit{t} \in {[0, T]}$ which comlpetes the proof. 
\end{proof}
\begin{corollary}\label{duboundd}
Stochastic process $u(t)$ is Malliavin differentiable and for every $p \geq 1$,
$$\sup_{0 \leq r \leq t \leq T, z \in \mathbb{R}_0}\mathbb{E}\Big(\vert D_{r,z}u(t) \vert^p\Big) < \infty.$$  
\end{corollary}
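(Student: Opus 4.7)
The plan is to apply the Malliavin operator $D_{r,z}$ term by term to the integral representation of $u(t)$ coming from (\ref{eq8}). Since the integrands $A_t^*(Y_t^x)^{-1}$, $\beta_t^{x,n}(Y_t^x)^{-1}$, and $M_{t,z'}^*(Y_t^x)^{-1}$ do not depend on $u$ itself, $u(t)$ is given by an explicit stochastic integral, so $D_{r,z}u(t)$ can be written out explicitly and then estimated in $L^p$.

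The building blocks I need are: (a) Malliavin differentiability of $X_s^x$, obtained via a Picard iteration in $\mathbb{D}^{1,2}$ applied to the SDE (\ref{sde1}), which then propagates through Propositions \ref{pro22} and \ref{pro23} to the coefficient processes $\alpha_s^x$, $\beta_s^{x,n}$, $B_s^{x,n}$, $M_{s,z'}^x$, $\gamma_s^x$ and hence to $A_s^*$ and $M_{s,z'}^*$; (b) the explicit formula (\ref{inverseddy}) for $D_{r,z}(Y_s^x)^{-1}$ already established inside the proof of Lemma \ref{uboundd}; and (c) the differentiation rule Proposition \ref{derjump} for the compensated-Poisson integral, which produces an evaluation at $(r,z)$ plus a stochastic integral of a difference-quotient term. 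For the Lebesgue and Wiener integrals, $D_{r,z}$ commutes with integration since the Poisson-chaos derivative acts only on the $\tilde{N}$-factors of the chaos expansion.

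The uniform $L^p$ estimate then follows from BDG and H\"older inequalities combined with: $(Y_s^x)^{-1}\in L^p$ from (\ref{equa1}); the pointwise bound $|D_{r,z}(Y_s^x)^{-1}|\le |(Y_s^x)^{-1}|\,|M_{r,z}^x|/|1+M_{r,z}^x|$, which is uniform in $(r,z)$ by Assumption \ref{assume2}; the sup-norm bounds on $\partial_1 b$, $\partial_i\sigma_n$, $\partial_i\lambda$ in (\ref{eq1})--(\ref{eq2}); and the $L^p$-bound on $\sup_s|X_s^x|$ from (\ref{equasup}). Collecting these and passing to the supremum over $0 \le r \le t \le T$ and $z \in \mathbb{R}_0$ yields the claimed estimate.

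The main obstacle is the compensated-Poisson term. Because the jump-Malliavin chain rule of Proposition \ref{pro23} is a difference quotient rather than a linear differential, computing $D_{r,z}\bigl(M_{s,z'}^*(Y_s^x)^{-1}\bigr)$ requires careful bookkeeping through Proposition \ref{pro22} and then an application of Proposition \ref{derjump} to the resulting Skorokhod integrand, keeping track of which factor has been shifted by $D_{r,z}$. Moreover, achieving the uniform bound in $z\in\mathbb{R}_0$ (rather than a weaker $L^2(\mu)$-integrated bound) relies crucially on the uniform sup-norm bounds $\|\partial_i\lambda\|_\infty<\infty$ in (\ref{eq2}), which prevent the estimate from deteriorating for large $|z|$ in the support of $\mu$.
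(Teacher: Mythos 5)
Your proposal is correct and follows essentially the same route as the paper, whose proof is simply the one-line invocation of Proposition \ref{derjump} together with \eqref{inverseddy} and \eqref{equa1}: differentiate the explicit integral representation \eqref{eq8} of $u(t)$ term by term, control $D_{r,z}(Y_s^x)^{-1}$ via \eqref{inverseddy} (your pointwise bound $|(Y_s^x)^{-1}|\,|M_{r,z}^x|/|1+M_{r,z}^x|$ is exactly what that identity gives), and close the $L^p$ estimate with \eqref{equa1} and the sup-norm bounds of Assumption \ref{assume2}. Your write-up just supplies the bookkeeping the paper leaves implicit.
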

\begin{proof}
According to Proposition \ref{derjump}, the equations \eqref{inverseddy} and \eqref{equa1} complete the proof.
\end{proof}

\subsection{Relationship between $\mathit{D_{t,z}}$ and flow in semi-linear case}
\noindent In this part, we find a relationship between the Malliavin derivative of the mean-field stochastic differential equation driven by the jump process with its stochastic flow. Also, Malliavin derivative of the path-dependent payoff functions like $\max_{0 \le s \le T}X_t$ is established and used in sensitivity analysis of lookback options.\\
According to Proposition \ref{derjump}, for mean-field stochastic differential equation with jumps in (\ref{sde1}), we have
\begin{align*}
D_{r,z}{X_t ^x}&= \lambda(r, X_r ^x, z, \eta_{X_r ^x} ) + \int_{r}^{t}{b(s, \rho_{X_r ^x})D_{r,z}X_s ^x}ds \notag\\
& + \sum_{n=1}^{d}{\int_{r}^{t}{\big{\{}}{\sigma_n (s, X_s ^x + D_{r,z}X_s ^x, \pi_{X_s ^x}) - \sigma_n (s, X_s ^x, \pi_{X_s ^x})}{\big{\}}}dW_s ^n}\notag\\
& +\int_{r}^{t}{\int_{\mathbb{R}_0}^{}{\big{\{}}{\lambda(s, X_s ^x + D_{r,z}X_s ^x, z, \eta_{X_s ^x}) - \lambda(s, X_s ^x, z, \eta_{X_s ^x})}{\big{\}}}\tilde{N}(dz, ds)}.
\end{align*}
Now, consider the following semi-linear form of SDE,
\begin{align}\label{sde7}
X_t ^x = x + \int_{0}^{t}{b(s, \rho_{X_s ^x})X_s ^x} &+ \sum_{n=1}^d{\int_{0}^{t}\{{C_s^{n} X_s ^{x}+ {\sigma}_{0n} (s, {\pi}_{X_s ^x})}\}d{W_s ^n}}\notag\\
&+\int_{0}^{t}{\int_{\mathbb{R}_0}^{}{\{{F_{s,z} X_s^{x} + \lambda_0(s, z, \eta_{X_s ^x})\}\tilde{N}(dz,ds)}}}.
\end{align}
From Assumption \ref{assum1} and relation (\ref{tilde}), for each $T>0$ and $p\ge2$ there exists $R^\prime = R^\prime (T, p)$, such that for n=1, ..., d,
\begin{equation*}
|xf^{-1}(t)C_t ^n| + \int_{\mathbb{R}_0}^{}{|xf^{-1}(t)F_{t,z}|}\mu(dz) \le R^\prime,
\end{equation*}
\begin{align*}
&\sum_{n=1}^{d}{\Big{(}\int_{0}^{T}{|xf^{-1}(s)\sigma_{0n}(s, \pi_{X_s ^x})|^2 ds}\Big{)}^{p/2}}  \notag\\
&+ \Big{(}{\int_{0}^{T}{\int_{\mathbb{R}_0}^{}{|xf^{-1}(s) \lambda_{0}(s, z, \eta_{X_s ^x})|^2}}\mu(dz) ds}\Big{)^{p/2}} \le R^\prime,
\end{align*}
and emphasize that from \eqref{eq1} and \eqref{eq2} there exist some constants $\alpha_F$ and $d_F$ that   
\begin{equation}\label{alphaf}
\alpha_F < \sup_{0 \leq r \leq T, z\in \mathbb{R}_0}  \vert F_{r,z} \vert < d_F.
\end{equation}
Therefore, the conditions of Lemma 5.19 in \cite{menaldi2008stochastic} hold, $S_t$ is invertible and 
\begin{equation*}
\mathbb{E}\Big{(}{{{\sup_{0 \le t\le T} |{\frac{1}{{S_t ^x}}}|^p} }}\Big{)} < \infty.
\end{equation*}
Consequently, 
\begin{equation}\label{inversexp}
\mathbb{E}\Big( {\sup_{0 \le t\le T} |{\frac{1}{{X_t ^x}}}|^p} \Big)  \le e^{-\int_0^T b(s,f(s) ds}\mathbb{E}\Big{(}{{\sup_{0 \le t\le T} |{\frac{1}{{S_t ^x}}}|^p} }\Big{)}< \infty.
\end{equation}
\noindent By Proposition \ref{derjump} Malliavin derivative for $r<t$ satisfies 
\begin{align*}
D_{r,z}{X_t ^x}
&={\lambda (r,X_r ^x, z, {\eta}_{X_r ^x})} + {\int_{r}^{t} {b (s,{\rho}_{X_s ^x}) D_{r,z}{X_s ^x}}\, ds} +\sum_{n=1}^d {\int_{r}^{t} {C_s ^{n}D_{r,z} {X_s ^x}}\, d{W_s ^n}} \notag\\
&+{\int_{r}^{t} {\int_{\mathbb{R}_{0}}^{} {F_{s,z} D_{r,z} {X_s ^x}}\, }\, \tilde{N} (dz,ds)}.
\end{align*}
\begin{corollary}
In a manner analogous to what we have established for $X_t$, we can demonstrate there exists some process $S_t ^*$ such that $\mathbb{E}\big(\sup_{t \in [0,T]} S_t^*\big) < \infty$, and 
\begin{align*}
{{D_{r,z}X_t} ^{{x}} := exp\{{\int_{0}^{t}b(s, f(s))ds}}\}S_t ^*.
\end{align*}
 By applying similar reasoning as that used for $X_t$, in conjunction with the results from equation (\ref{eq1}), the linear growth property of $\lambda$ and \eqref{equasup}, we can assert that for any  $p \ge 1$, the following conclusion is valid.
\begin{align*}
\mathbb{E} {\Big(\sup_{t \in [0, T]} |D_{r,z} X_t ^x|^p \Big)}  < \infty.
\end{align*}
\end{corollary}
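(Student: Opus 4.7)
The plan is to imitate, at the level of the Malliavin derivative, the factorization $X_t^x = \exp\!\bigl(\int_0^t b(s,f(s))\,ds\bigr)\,S_t$ used in the proof of Theorem~\ref{XT}. The crucial observation enabling this is that Step~1 of that proof identifies $\rho_{X_s^x} = \mathbb{E}(X_s^x) = f(s)$, so in the Malliavin SDE displayed just above the statement the coefficient multiplying $D_{r,z}X_s^x$ in the drift reads $b(s,\rho_{X_s^x}) = b(s,f(s))$, a \emph{deterministic} bounded function of $s$. This is precisely the structure that allowed us to absorb the drift into a deterministic time-exponential in Theorem~\ref{XT}, and it is exactly what we need here.

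Concretely, for fixed $(r,z) \in [0,T]\times\mathbb{R}_0$ I would introduce
$$ S_t^\ast \;:=\; \exp\!\Bigl(-\int_0^t b(u,f(u))\,du\Bigr)\, D_{r,z}X_t^x, \qquad t\in[r,T], $$
(and $S_t^\ast := 0$ for $t<r$), and apply the Itô product formula. The drift contribution $b(s,\rho_{X_s^x}) D_{r,z}X_s^x\,ds$ cancels exactly against $-b(s,f(s)) S_s^\ast\,ds$ coming from the deterministic exponential factor, so $S_t^\ast$ solves the driftless linear SDE
\begin{align*}
S_t^\ast \;=\;{}& \exp\!\Bigl(-\int_0^r b(u,f(u))\,du\Bigr)\,\lambda(r, X_r^x, z, \eta_{X_r^x}) \\
& + \sum_{n=1}^d \int_r^t C_s^n\, S_s^\ast \, dW_s^n + \int_r^t \int_{\mathbb{R}_0} F_{s,z'}\, S_s^\ast \, \tilde{N}(dz',ds),
\end{align*}
which is of exactly the same form as the homogeneous SDE~\eqref{sde2} on the time interval $[r,T]$, but with a random square-integrable initial datum at time $r$. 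Once this SDE is in hand, the $L^p$-bound on $\sup_t |S_t^\ast|$ follows from Lemma~\ref{t1} applied to this linear equation combined with a Gronwall argument: the coefficients $C_s^n$ and $F_{s,z'}$ are uniformly bounded thanks to Assumption~\ref{assume2} (via \eqref{eq1}, \eqref{eq2}) and \eqref{alphaf}, while the initial condition $\lambda(r,X_r^x,z,\eta_{X_r^x})$ belongs to $L^p$ for every $p\ge 1$ by the linear growth of $\lambda$ in the second argument together with~\eqref{equasup}. Multiplying back by the bounded deterministic factor $\exp\!\bigl(\int_0^t b(s,f(s))\,ds\bigr)$ (bounded because $f$ is bounded and $b$ has linear growth) then yields the desired estimate $\mathbb{E}\bigl(\sup_{0\le t\le T}|D_{r,z}X_t^x|^p\bigr) < \infty$.

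The main obstacle I anticipate is technical rather than conceptual: one must rigorously justify the Itô calculation yielding the driftless SDE for $S_t^\ast$, which requires checking that each stochastic integral involved is well defined (using $L^2$-integrability of $D_{r,z}X_t^x$, boundedness of $C_s^n$, and the two-sided bound \eqref{alphaf} on $F_{s,z'}$), and then verifying that the moment inequality from Lemma~\ref{t1}, originally written for \eqref{sde2} starting at $t=0$, transfers cleanly to $S_t^\ast$ with its $(r,z)$-dependent random initial datum at time $r$. No uniformity in $(r,z)$ is claimed in the statement, so the estimate is genuinely a per-$(r,z)$ one and no further integration against $\mu(dz)$ needs to be controlled at this stage.
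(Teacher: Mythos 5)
Your proposal takes essentially the same route as the paper: its own justification of this corollary is precisely the factorization $D_{r,z}X_t^x=\exp\{\int_0^t b(s,f(s))\,ds\}S_t^*$ inherited from Theorem \ref{XT}, with the driftless factor controlled by the Kunita-type moment estimate of Lemma \ref{t1} and Gronwall, the bounded coefficients coming from \eqref{eq1}--\eqref{eq2} (and \eqref{alphaf}), and the $L^p$ initial datum coming from the linear growth of $\lambda$ together with \eqref{equasup}. Your added details (the explicit It\^o product computation and the per-$(r,z)$ handling of the random initial condition at time $r$) merely flesh out the sketch the paper leaves implicit, so the argument is correct and matches the intended proof.
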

\noindent On the other hand, according to the definition of $Y_t$, we deduce
\begin{equation}\label{eqq}
D_{r,z} {X_t ^x} = {Y_t ^x} {(Y_r ^x) ^{-1}} {\lambda (r,X_r ^x, z, {\eta}_{X_r ^x})} {\mathbf{1}_{r \le t}}.
\end{equation}
As a consequence, from (\ref{eq7}), for every $r,t {\in} [0,T]$ with $ r{\le}t$,
we derive desired relationship between the stochastic flow and the Malliavin derivative of $X_t ^x$.\\
\begin{equation*}
{\frac{\partial}{\partial{x}}{X_t ^x }}={D_{r,z}}{X_t ^x } {\lambda^{-1} (r,X_r ^x, z, {\eta}_{X_r ^x})} {Y_r ^x}{{u}(t)}, \hspace{7mm} r{\le}t.
\end{equation*}
\noindent Here, we demonstrate the Malliavin differentiability of the function $G_M(X^x)=\max_{0 \leq t \leq T}X_t^x$ which serves for pricing the barrier-lookback options. 
\begin{lemma}
 The random variables $G_M(X^x)=\max_{0 \leq t \leq T}X_t^x$ and $G_m(X^x)=\min_{0 \leq t \leq T}X_t^x$ are Malliavin  differentiable.
\end{lemma}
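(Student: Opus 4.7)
The plan is to approximate the continuous-time maximum by finite maxima over a refining dense partition and then pass to the limit using closability of the Malliavin derivative. Fix partitions $\Pi_n=\{0=t_0^n<t_1^n<\cdots<t_n^n=T\}$ with $|\Pi_n|\downarrow 0$ and $\bigcup_n\Pi_n$ dense in $[0,T]$, and set $G_M^{(n)}:=\max_{0\le i\le n}X_{t_i^n}^x$. The three main steps are: (i) show each $G_M^{(n)}\in\mathbb{D}^{1,2}$ with an explicit expression for $D_{r,z}G_M^{(n)}$; (ii) show $G_M^{(n)}\to G_M$ in $L^2(P)$; and (iii) bound $\{DG_M^{(n)}\}$ uniformly in $L^2(P\times\lambda\times\mu)$, so that closability of the operator $D$ forces $G_M\in\mathbb{D}^{1,2}$.

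For step (i), each $X_{t_i^n}^x$ lies in $\mathbb{D}^{1,2}$ with derivative given by the representation $D_{r,z}X_t^x=Y_t^x(Y_r^x)^{-1}\lambda(r,X_r^x,z,\eta_{X_r^x})\mathbf{1}_{r\le t}$ established in Section 4. Extending Proposition \ref{pro23} to the multivariate setting (the proof by chaos expansion is identical for a vector-valued input) and applying it to the continuous function $\varphi(y_0,\ldots,y_n)=\max_i y_i$ yields
\begin{equation*}
D_{r,z}G_M^{(n)}=\max_{0\le i\le n}\bigl(X_{t_i^n}^x+D_{r,z}X_{t_i^n}^x\bigr)-\max_{0\le i\le n}X_{t_i^n}^x,
\end{equation*}
provided the $L^2$-hypotheses hold, which follow from the uniform moment bound $\mathbb{E}\sup_{0\le t\le T}|X_t^x|^p<\infty$ of \eqref{equasup}, the linear growth of $\lambda$, and $\int_{\mathbb{R}_0}\varrho(z)\,\mu(dz)<\infty$ from Assumption \ref{assum1}.

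For step (ii), $t\mapsto X_t^x$ is c\`adl\`ag and $\mathbb{E}\sup_{t\in[0,T]}|X_t^x|^p<\infty$ for every $p\ge 1$; enlarging the partitions if necessary to include (projections of) the jump instants of each fixed trajectory yields $G_M^{(n)}\to G_M$ almost surely, and dominated convergence then gives $L^2(P)$-convergence. For step (iii), the contraction inequality $|\max_i a_i-\max_i b_i|\le\max_i|a_i-b_i|$ combined with the explicit formula above gives
\begin{equation*}
|D_{r,z}G_M^{(n)}|\le\sup_{t\in[0,T]}|Y_t^x|\cdot|(Y_r^x)^{-1}|\cdot|\lambda(r,X_r^x,z,\eta_{X_r^x})|.
\end{equation*}
The square of the right-hand side is integrable against $P\otimes\lambda\otimes\mu$ uniformly in $n$ thanks to \eqref{ytp} and \eqref{equa1} on $Y^x$ and $(Y^x)^{-1}$, the Lipschitz bound $|\lambda(r,X_r^x,z,\eta_{X_r^x})|^2\le \varrho(z)C(1+|X_r^x|^2+\mathbb{E}|X_r^x|^2)$ from Assumption \ref{assum1}, and the finiteness of $\int_{\mathbb{R}_0}\varrho(z)\,\mu(dz)$. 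Closability of $D$ (it is the adjoint of $\delta$, cf.\ Chapter 10 of \cite{nualart2018introduction}) together with $G_M^{(n)}\to G_M$ in $L^2(P)$ and the uniform bound $\sup_n\|G_M^{(n)}\|_{\mathbb{D}^{1,2}}<\infty$ then yields $G_M\in\mathbb{D}^{1,2}$. The case of $G_m$ is identical, or follows from $G_m=-\max_{0\le t\le T}(-X_t^x)$.

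The main technical obstacle is in step (ii): the supremum of a c\`adl\`ag trajectory over $[0,T]$ need not agree with its supremum over an arbitrary countable dense subset, since the maximum could be attained as a left limit at a positive jump. Handling this cleanly requires either refining partitions pathwise along the (a.s.\ countable) jump times of $X^x$, or replacing $G_M$ equivalently with $\max\bigl(\sup_{t\in D}X_t^x,\sup_{t\in D}X_{t^-}^x\bigr)$ for a fixed dense $D\subset[0,T]$; both routes produce the same a.s.\ limit of $G_M^{(n)}$. A secondary delicate point is verifying the joint integrability conditions of the multivariate version of Proposition \ref{pro23}, where the weight $\varrho$ and the uniform $L^p$-moment bounds of $Y^x$ and $(Y^x)^{-1}$ enter essentially.
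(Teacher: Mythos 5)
Your proposal is correct and follows essentially the same route as the paper: approximate the running maximum by finite maxima over a dense deterministic set of times, differentiate these via the (multivariate form of the) chain rule of Proposition \ref{pro23}, obtain a uniform bound in $L^2(P\times\lambda\times\mu)$ using the representation of $D_{r,z}X_t^x$ together with \eqref{ytp}, \eqref{equa1} and Assumption \ref{assum1}, and conclude by closability of $D$. The only remark is that your ``main technical obstacle'' in step (ii) is not actually one: since the paths of $X^x$ are c\`adl\`ag, right-continuity already gives $\sup_{t\in D}X_t^x=\sup_{t\in[0,T]}X_t^x$ almost surely once $T\in D$ (left limits never exceed the supremum of the actual values), so no pathwise refinement along jump times is needed --- and such a refinement would in any case make the partition random and interfere with step (i).
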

\begin{proof}
Let $\left(t_{k}\right)_{k \geq 1}$ be a dense subset of $[0, T]$, and for each $n \geq$ 1, define $G_{n}=\max \left\{X_{t_{1}}^x, \ldots, X_{t_{n}}^x\right\}$. Clearly, $\left(G_{n}\right)_{n>1}$ is an increasing sequence bounded by $G_M(X^x)$. Hence $G_{n}$ converges to $G_M(X)$ in the $L^{2}(P)$-norm when $n$ goes to infinity. Also, since $G_n$ is a Lipschitz function on $\mathbb{R}^n$, 
$$D_{t,z}G_n = \max\left\{X_{t_{1}}^x+D_{t,z}X_{t_{1}}^x1_{t \leq t_1}, \ldots, X_{t_{n}}^x+D_{t,z}X_{t_{n}}^x1_{t \leq t_n}\right\}-G_n,$$   
and 
$$\sup_{n \geq 1} \Vert DG_n \Vert_{L^2(P\times \lambda \times \mu)}\leq \sup_{t \leq t_i \leq T}\Vert D_{t,z}X_{t_i}^x1_{t \leq t_i}\Vert_{L^2(P\times \lambda \times \mu)} < \infty.$$
Consequently, $G(X^x)$ belongs to $\mathbb{D}^{1,2}$ and from Proposition \ref{pro23} we derive
\begin{equation*}
D_{t,z}G_M(X^x)=\sup_{0 \le s\le T}\left\{X_{s}^x+D_{t,z}X_{s}^x1_{t \leq s}\right\}-G_M(X^x).
\end{equation*}
Similarly, for $G_m(X^x):=\min_{0 \le t \le T}X_s^x$ is Malliavin differentiable and 
\begin{equation}\label{dminx}
D_{t,z}\min_{0 \le t \le T}X_s^x=\inf_{0 \le s\le T}\left\{X_{s}^x+D_{t,z}X_{s}^x1_{t \leq s}\right\}-\min_{0 \le t \le T}X_s^x.
\end{equation}
\end{proof}

\section{Applications}
\noindent Malliavin calculus offers significant advantages for sensitivity analysis, particularly in computing the Delta for mean-field stochastic differential equations with jumps. A foremost benefit is its capacity to significantly reduce variance in Monte Carlo simulations, which is especially advantageous when addressing discontinuous payoff functions. This reduction in variance results in more stable, as mentioned in \cite{fournie1999applications}, and precise outcomes compared to conventional finite difference methods. Its computational efficiency is particularly notable in high-dimensional contexts, where it facilitates the calculation of sensitivities without necessitating multiple simulations, thereby conserving computational resources. Additionally, the theoretical rigor inherent in Malliavin calculus enhances the precision of sensitivity analyses, providing deeper insights into the structure of stochastic processes. These attributes collectively render it an indispensable and powerful tool for tackling complex stochastic dynamics and improving risk management strategies in financial systems.\\
\noindent Now, we can present an application of the above results in finding an expression for Delta of the square-integrable pay-off function $\Phi (X^x_T, G(X^x))$ with respect to a Malliavin weighting function which has a Skorokhod integral form, where $X_t ^x$ be a mean-field SDEs driven by jump process of the form (\ref{sde1}). We determine which conditions these functions should satisfy to serve as weighting functions. We will also find this weight function for European call options and barrier options.\\
Put $G(X^x)=G_{i}(X^x)$ for $i=M, m$ and define $\vartheta (x)$ in the following way 
\begin{equation*}
{\vartheta}(x)=\mathbb{E}{\vartheta}_{X,G}(x):= {\mathbb{E}}\Big({\Phi}(X_T ^x, G(X^x)) \Big).
\end{equation*}
\begin{theorem}\label{th1}
Let ${\Phi} :{\mathbb{R}^2}{\to}{\mathbb{R}}$ be of polynomial growth. Then $\vartheta$ is continuously differentiable and if there exists a Skorohod integrable function $\omega (.,.)$ satisfying the following equation
\begin{align*}
\mathbb{E} &\Big( {\Phi}_{X}^{\prime}(X_T ^x, G(X^x)) {Y_T ^x} u(T)+ {\Phi}_G^{\prime}(X_T ^x, G(X^x)) \frac{\partial }{\partial x}G(X^x)\Big)\\
&=  {\mathbb{E}} {\Big( } {\int_{0}^{T}{\int_{{\mathbb{R}}_0}^{}{{\omega}(t,z) {\Big( }{\Phi}(X_T^x + D_{t,z} X_T^x, G(X^x)+ D_{t,z}G(X^x)) - \vartheta_{X,G} {\Big) }}{\mu}(dz)}dt} {\Big) },
\end{align*}
where ${\Phi}_{X}^{\prime}$ and ${\Phi}_G^{\prime}$ are derivative of $\Phi$ with respect to the first and the second components, respectively, then
\begin{align*}
 &\Delta = {\frac{\partial}{\partial x}}{\vartheta}(x) = \mathbb{E} \Big( {\Phi}(X_T^x, G(X^x)) \delta (\omega) \Big).
\end{align*}
\end{theorem}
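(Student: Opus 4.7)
The plan is to differentiate $\vartheta(x)=\mathbb{E}[\Phi(X_T^x,G(X^x))]$ under the expectation, rewrite the resulting gradient using the flow representation from Lemma \ref{lem1}, identify the integrand of the assumed weighted representation as a Malliavin derivative via Proposition \ref{pro23}, and finally apply the duality formula between $D$ and $\delta$ to collapse the double integral into $\mathbb{E}[\Phi(X_T^x,G(X^x))\,\delta(\omega)]$.

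First I would justify the differentiation under the integral. Since $\Phi$ has polynomial growth, $X_T^x\in L^p(P)$ for every $p\geq 1$ by Theorem \ref{XT} and \eqref{equasup}, and $G(X^x)=G_M(X^x)$ or $G_m(X^x)$ is dominated by $\sup_{0\le s\le T}|X_s^x|\in L^p(P)$, the classical dominated convergence argument applied to the incremental ratio of $\vartheta_{X,G}(x)$ yields
\begin{equation*}
\frac{\partial}{\partial x}\vartheta(x)=\mathbb{E}\!\left[\Phi_X'(X_T^x,G(X^x))\,\frac{\partial X_T^x}{\partial x}+\Phi_G'(X_T^x,G(X^x))\,\frac{\partial G(X^x)}{\partial x}\right].
\end{equation*}
The required uniform $L^p$-bound on the difference quotient of $X_T^x$ follows from Corollary \ref{coro} together with Lemma \ref{lem1}, which also substitutes $\partial_x X_T^x=Y_T^x u(T)$, reducing the right-hand side to exactly the left-hand side of the hypothesis of the theorem.

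Next, invoking the hypothesis of the theorem, I would replace this expression by
\begin{equation*}
\mathbb{E}\!\left[\int_0^T\!\int_{\mathbb{R}_0}\omega(t,z)\bigl\{\Phi(X_T^x+D_{t,z}X_T^x,G(X^x)+D_{t,z}G(X^x))-\Phi(X_T^x,G(X^x))\bigr\}\mu(dz)\,dt\right],
\end{equation*}
using the fact that $\vartheta_{X,G}=\Phi(X_T^x,G(X^x))$. A straightforward two-variable extension of Proposition \ref{pro23} (proved in the same way as the scalar chaos-expansion chain-rule identity) shows that the bracketed expression equals $D_{t,z}\bigl(\Phi(X_T^x,G(X^x))\bigr)$. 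The integrability hypotheses needed for Proposition \ref{pro23} are met because $\Phi$ and its shifted version lie in $L^2(P)$ (polynomial growth combined with the finite moments of $X_T^x$, $D_{t,z}X_T^x$ and $D_{t,z}G(X^x)$ inherited from \eqref{eqq} and \eqref{dminx}), so in particular $\Phi(X_T^x,G(X^x))\in\mathbb{D}^{1,2}$.

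Finally, applying the duality formula (Proposition between $D$ and $\delta$) with $F=\Phi(X_T^x,G(X^x))$ and the Skorokhod-integrable integrand $\omega$ gives
\begin{equation*}
\mathbb{E}\!\left[\int_0^T\!\int_{\mathbb{R}_0}\omega(t,z)\,D_{t,z}F\,\mu(dz)\,dt\right]=\mathbb{E}\bigl[F\,\delta(\omega)\bigr],
\end{equation*}
which is the claimed formula for $\Delta$. I expect the principal technical obstacle to be the justification of differentiation under the expectation for payoffs that are only of polynomial growth (not differentiable everywhere in the intended applications), together with the two-variable extension of the non-standard chain rule in Proposition \ref{pro23}: one must argue through the chaos expansion rather than invoke a classical chain rule, and ensure the $L^2$ integrability of the shifted composition uniformly in $(t,z)$ so that the duality step is legitimate.
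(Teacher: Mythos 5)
Your proposal is correct and follows essentially the same route as the paper: differentiate under the expectation (dominated convergence) with the flow representation $\frac{\partial}{\partial x}X_T^x = Y_T^x u(T)$ from Lemma \ref{lem1}, invoke the hypothesis, identify the bracketed difference as $D_{t,z}\bigl(\Phi(X_T^x,G(X^x))\bigr)$ via the (two-variable) chain-rule identity of Proposition \ref{pro23}, and conclude by the duality formula between $D$ and $\delta$. Your added attention to the $L^p$-bounds on the difference quotient and to the $L^2$-integrability of the shifted composition is a more careful spelling-out of steps the paper only asserts.
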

\begin{proof}
\noindent According to the approach used in \cite{huehne2005malliavin}, 
\begin{align}
\Delta = \frac{\partial}{\partial x} \vartheta &= \frac{\partial}{\partial x}{\mathbb{E}} \Big( \Phi (X_T ^x, G(X^x))  \Big) \notag\\
&=\mathbb{E} \Big( {\Phi}_{X}^{\prime}(X_T ^x, G(X^x)) {Y_T ^x} u(T)  + {\Phi}_G^{\prime}(X_T ^x, G(X^x))\frac{\partial }{\partial x}G(X^x)\Big), \label{Delta}
\end{align}
\noindent where the interchange of the derivative and the expectation operator is justified by the dominated convergence theorem. Now, from the assumption and the duality formula in Poisson space we have
\begin{align*}
&\Delta = \mathbb{E} \Big( \Phi (X_T ^x, G(X^x)) \int_{0}^{T}{\int_{\mathbb{R}_0}^{}{\omega (t,z) \tilde{N} (dz, \delta t) }} \Big) \notag\\
&= \mathbb{E} \Big( \int_{0}^{T}{\int_{\mathbb{R}_0}^{}{\omega (t,z) D_{t,z} (\Phi (X_T ^x, G(X^x))) \mu (dz) dt}}  \Big) \notag\\
& = \mathbb{E} \Big( \int_{0}^{T}{\int_{\mathbb{R}_0}^{}{\omega (t,z) \big( \Phi ( X_T^x + D_{t,z}X_T ^x,G(X^x)+ D_{t,z}G(X^x)) - \vartheta_{X,G}  \big) \mu(dz) dt}} \Big).
\end{align*}
\end{proof}
\begin{example}{European call option}{\label{Example5.2}}\\
\noindent The pay-off function of a standard European call option depends on the underlying stock price at the expiry time and the strike price. Since the pay-off function only depends on the underlying stock price at maturity, it is usually referred to as a path-independent option. For the European call option with a pay-off function 
\begin{equation*}
\Phi (X_T) = (X_T - K)^{+},
\end{equation*}
 and $X_t^x$ in the semi-linear equation (\ref{sde7}), we state the weighting function of the delta as follows
\begin{align}\label{wEur}
\omega (t,z)= \frac{H_K (X_T)Y_T u_T}{\mathcal{Q} \Big[(X_T + D_{t,z} X_T - K)^{+} - (X_T - K)^{+}\Big]},
\end{align}
where $\mathcal{Q}:= \int_{0}^{T}\int_{\mathbb{R}_0}^{}\mu(dz)dt$, and $\mathit{H_y (x)}=\mathbf{1}_{x>y}$ is a Heaviside function
which implies that for ${X_T - K >0}$ 
\begin{align}\label{omega1}
\omega (t,z)=
\begin{cases}
\frac{Y_T u_T}{\mathcal{Q}{D_{t,z} X_T}} \hspace{7mm}{D_{t,z} X_T \ge K - X_T}, \\
\\
\frac{Y_T u_T}{\mathcal{Q}{(K-X_T)}}\hspace{7mm} {D_{t,z} X_T < K - X_T},
\end{cases}
\end{align}
and ${0}$ otherwise. 
It is essential to show $\omega \in Dom\delta$. 
\begin{lemma}\label{lemm1}
If  the function $\lambda^{-1}$ is $L^p$-integrable, for $p \geq 1$, i.e, $$\mathbb{E}\big(\sup_{0 \leq t \leq T}\vert \lambda(t, X_t ^x, z, \eta_{X_t ^x}\big) \vert^{-p}) < \infty,$$ then the function $w$ belongs to $Dom\delta$.
\end{lemma}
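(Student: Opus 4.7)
The strategy is to verify the stronger inclusion $\omega\in\mathbb{L}^{1,2}$; combined with the inclusion $\mathbb{L}^{1,2}\subset\mathrm{Dom}\,\delta$ already recorded in Section 2, this will give the conclusion. Concretely, three items must be established: (i) $\omega\in L^{2}(P\times\lambda\times\mu)$; (ii) $\omega(t,z)\in\mathbb{D}^{1,2}$ for $(t,z)$-a.e.; and (iii) a uniform $L^{2}$-bound for the Malliavin derivative $D\omega$ on $(\lambda\times\mu)^{2}$.

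The opening move is to exploit the representation $D_{t,z}X_{T}^{x}=Y_{T}^{x}(Y_{t}^{x})^{-1}\lambda(t,X_{t}^{x},z,\eta_{X_{t}^{x}})\mathbf{1}_{t\le T}$ from \eqref{eqq}. Substituted into the first branch of \eqref{omega1}, this produces the clean form $\omega(t,z)=Y_{t}^{x}u(T)/[\mathcal{Q}\,\lambda(t,X_{t}^{x},z,\eta_{X_{t}^{x}})]$, whose $L^{2}(P\times\lambda\times\mu)$-integrability follows by H\"older's inequality from the hypothesis $\mathbb{E}(\sup_{t}|\lambda|^{-p})<\infty$, the moment estimate \eqref{ytp} for $Y_{t}^{x}$, and the bound in Corollary \ref{coro} for $u(T)$. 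Item (ii) is then obtained by differentiating this expression via the product rule (Proposition \ref{pro22}) and the functional chain rule (Proposition \ref{pro23}), the Malliavin differentiability of each factor being in hand: $Y_{t}^{x}=\exp H(t)$ through \eqref{inverseddy} and Assumption \ref{assume2}; $u(T)$ through Corollary \ref{duboundd}; and $X_{t}^{x}$, $\lambda$ through Proposition \ref{derjump} together with the smoothness in Assumption \ref{assume2}. Item (iii) follows from the same decomposition, combined with the uniform bounds \eqref{eq15} and a further H\"older step.

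The main obstacle is the second branch of \eqref{omega1}, where the weight $Y_{T}^{x}u(T)/[\mathcal{Q}(K-X_{T}^{x})]$ contains the a priori singular factor $(X_{T}^{x}-K)^{-1}$. The unlocking observation is that the very event defining this branch, namely $\{X_{T}^{x}>K,\;D_{t,z}X_{T}^{x}<K-X_{T}^{x}\}$, forces $\lambda(t,X_{t}^{x},z,\eta_{X_{t}^{x}})<(K-X_{T}^{x})Y_{t}^{x}/Y_{T}^{x}$ on the jump coordinate, so that as $X_{T}^{x}\downarrow K$ the $\mu$-mass of admissible $z$'s shrinks; this shrinkage, coupled with the strong hypothesis $\mathbb{E}(\sup_{t}|\lambda|^{-p})<\infty$ valid for every $p\ge 1$, compensates for the singularity and restores $L^{2}$-integrability. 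The same scheme applied after differentiation yields the $L^{2}$-bound on $D\omega$ restricted to this branch and completes the verification of (i)--(iii).
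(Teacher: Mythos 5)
Your overall strategy coincides with the paper's: reduce to showing the weight and its Malliavin derivative have enough moments (so that $\omega\in\mathbb{L}^{1,2}\subset \mathrm{Dom}\,\delta$), rewrite the first branch of \eqref{omega1} via \eqref{eqq} as $Y_t^x u(T)/[\mathcal{Q}\,\lambda(t,X_t^x,z,\eta_{X_t^x})]$, and control it by H\"older using the hypothesis on $\lambda^{-1}$, \eqref{ytp}, and Corollaries \ref{coro} and \ref{duboundd}. The genuine gap is in your item (iii). Differentiating $1/\mathcal{G}$ with $\mathcal{G}=(Y_t^x)^{-1}\lambda(t,X_t^x,z,\eta_{X_t^x})$ through Proposition \ref{pro23} produces the shifted denominator, $D_{r,z'}(1/\mathcal{G})=-D_{r,z'}\mathcal{G}/\bigl[\mathcal{G}(\mathcal{G}+D_{r,z'}\mathcal{G})\bigr]$, so you must bound negative moments of $\mathcal{G}+D_{r,z'}\mathcal{G}$, i.e.\ of $\lambda$ evaluated along the shifted path $X_t^x+D_{r,z'}X_t^x$. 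The tools you cite for this step, the upper bounds \eqref{eq15} and ``a further H\"older step,'' cannot control a reciprocal, and the hypothesis $\mathbb{E}(\sup_t|\lambda|^{-p})<\infty$ only controls the unshifted $\mathcal{G}$. The paper closes exactly this point by using the semi-linear structure \eqref{sde7} (so that $\mathcal{G}+D_{r,z'}\mathcal{G}$ involves $F_{t,z}(X_t+D_{r,z'}X_t)$), the bounds \eqref{alphaf}, the inverse moments \eqref{inversexp} of $X_t$, and \eqref{eqq} together with \eqref{equa1} and the hypothesis to handle $1/D_{r,z'}X_t$; without some such input your verification of (iii) does not go through.

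Your treatment of the singular branch $Y_Tu(T)/[\mathcal{Q}(K-X_T)]$ is also only a heuristic: the claim that ``the $\mu$-mass of admissible $z$'s shrinks'' as $X_T\downarrow K$ is never quantified and is not the mechanism that restores integrability. The constraint you correctly extract, $\lambda<(K-X_T)Y_t/Y_T<0$ on that branch, should simply be inverted: it yields the pointwise domination $1/(X_T-K)\le Y_t/(Y_T|\lambda|)$, so the second-branch weight is bounded by the first-branch one and its $L^p$-integrability follows from the same H\"older argument, with no measure-shrinkage reasoning needed. Even granting that, the Malliavin derivative of this branch still requires separate control of the shifted denominator $K-X_T-D_{r,z'}X_T$ (the paper, admittedly tersely, invokes \eqref{inversexp} here), which your sketch does not address at all.
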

\begin{proof}
From Lemma \ref{uboundd} and Corollary \ref{duboundd}, it is sufficient to show that $\omega_0^1:=\frac{Y_T}{D_{t,z}{X_T ^x}} \in  \mathbb{D}^{1, p}$  and $\omega_0^2:=\frac{Y_T}{(K- X_T ^x)} \in  \mathbb{D}^{1, p}$. 
In the light of relation (\ref{eqq}), we have
\begin{equation*}
w_0^1= \frac{1}{\mathcal{G}}:=\frac{1}{Y_t ^{-1} \lambda(t, X_t ^x, z, \eta_{X_t ^x})}.
\end{equation*}
The fact that $Y_t ^{-1}$ and $X_t ^x$  are Malliavin differentiable implies that $w_0^1, w_0^2$ is also Malliavin differentiable. 
 Based on \eqref{inversexp} and \eqref{alphaf}, when $\frac1p+\frac1q=1$, 
\begin{align*}
\mathbb{E}\Big{(}\int_0^T\int_{\mathbb{R}_0}& |w_0^1(t,z)|^p\mu(dz)dt\Big{)}\\
&\le {\mathbb{E}}^{\frac1q}\big{(}\sup_{0 \le t \leq T} |Y_t|^q\big{)}\Big{(}\int_0^T\int_{\mathbb{R}_0}\mathbb{E}^{\frac1p}\Big\vert \lambda(t, X_t^x, z,\eta_{X_t^x}) \Big\vert^{-p}\mu(dz)dt\Big{)} 
< \infty.
\end{align*}
 Taking derivative of $w_0$ for every $0 \leq r \leq t$ and $z' \in \mathbb{R}_0$, and apply \eqref{inverseddy} to achieve 
\begin{align*}
\vert D_{r,z'}w_0^1 (t,z) \vert & = \vert \frac{-D_{r,z'}\mathcal{G}}{\mathcal{G}(\mathcal{G}+D_{r,z'}\mathcal{G})}\vert  \leq \vert \frac{1}{\mathcal{G}} + \frac{1}{D_{r,z'}\mathcal{G}} \vert \\
& = \vert w_0^1\vert + \vert  \frac{Y_t^2(1+M_{r,z'})}{F_{t,z}(X_t+D_{r,z'}X_t)}\vert\\
& = \vert w_0^1\vert + \frac12 \vert  \frac{Y_t^2(1+M_{r,z'})}{F_{t,z}}\vert \vert \frac{1}{X_t} + \frac{1}{D_{r,z'}X_t}\vert.
\end{align*}
Our assumption, \eqref{eqq} and \eqref{equa1} guarantee that 
$$\mathbb{E}\Big{(}\int_0^T\int_{\mathbb{R}_0} \vert \frac{1}{X_t} \vert^p \mu(dz)dt+\int_r^T\int_{\mathbb{R}_0}\vert \frac{1}{D_{r,z'}X_t}\vert^p \mu(dz)dt\Big) < \infty.$$ 
Finally, utilize \eqref{ytp} and \eqref{alphaf} to derive that $w_0^1 \in  Dom\delta$.\\
The same attribute as above is valid to show $w_0^2$ belongs to $\mathbb{D}^{1, p}$ and we need to utilize \eqref{inversexp} in the same way.
\end{proof}
\noindent We note here that from \eqref{inversexp}, if $\lambda_0(t, z,\eta_{X_t^x}) =0$, the condition in Lemma \ref{lemm1} satisfies immediately.
\end{example}
\begin{example}{Barrier knock out option }
\subsubsection*{Down and out call}
\noindent As long as the underlying stock price remains above the barrier price, the option offers a pay-off function equivalent to a European call option. If the underlying stock price drops to or below the barrier price at any time during the option lifetime, the call option is immediately terminated. This can be written as {\cite{schoutens2003pricing}}
\begin{equation*}
\Phi (X_{\tau}) = (\min_{0 \le t \le T}X_t^x- K)^{+} \mathbf{1}_{(X_{\tau} > B)}= (\min_{0 \le t \le T}X_t^x-K)^+\mathbf{1}_{\min_{0 \le t \le T}X_t^x> B},
\end{equation*}
where $X_t$ is the underlying stock price at time $t$, $K$ is the strike price, $B$ is the barrier price agreed at the beginning of the contract, and $\tau$ is the stopping time that the process $X^x$ reaches its minimum on $[0, T]$. Apply Proposition \ref{pro23} to result $$D_{t,z}\mathbf{1}_{G_m(X^x)> B}= \mathbf{1}_{G_m(X^x)+D_{t,z}G_m(X^x)> B}-\mathbf{1}_{G_m(X^x)> B}.$$
Now, Proposition \ref{pro22} and \eqref{dminx} deduce that
\begin{align*}
D_{t,z}\Phi(X_\tau)&=(G_m(X^x)-K)^+ D_{t,z}\mathbf{1}_{G_m(X^x)> B} +D_{t,z}(G_m(X^x)-K)^+\mathbf{1}_{G_m(X^x)> B}\\
&+D_{t,z}(G_m(X^x)-K)^+D_{t,z}\mathbf{1}_{G_m(X^x)> B}\\
&= (G_m(X^x)+D_{t,z}G_m(X^x) -K)^+\mathbf{1}_{G_m(X^x)+D_{t,z}G_m(X^x)> B}\\
&-(G_m(X^x)-K)^+\mathbf{1}_{G_m(X^x)> B}\\
&=\Phi ((X^x+\mathbf{1}_{t \leq .}D_{t,z}X^x)_{\tau^*})-\Phi(X_\tau),
\end{align*}
in which  $\tau^*$ is the stopping time that the process $X_s^x+\mathbf{1}_{t \leq s}D_{t,z}X_s^x$ reaches its minimum. 
By determine $\tau$ and $\tau^*$, we can provide barrier Malliavin weights as follows,
\begin{align*}
\omega_{Br} (t,z)= \frac{\mathbf{1}_{X_\tau> B} H_K (X_{\tau})\frac{\partial}{\partial x}(G_m(X^x)) }{\mathcal{Q}\Big[\Phi ((X^x+\mathbf{1}_{t \leq .}D_{t,z}X^x)_{\tau^*})-\Phi(X_\tau)\Big]}.
\end{align*}
We can conclude that $\omega \in Dom(\delta)$, similar to the proof of Lemma \ref{lemm3} in the case of up and out barrier options, so we omit it here and just prove it in the later case.
\subsubsection*{Up and out call}
\noindent  The pay-off function of this option is equivalent to that of a European call option if the underlying stock price remains below the barrier price. If the underlying stock price rises above the barrier price during the life of the option, the call option is immediately exercised. Here, $\tau$ and $\tau^*$ are the stopping times that the processes $X_s^x$ and $X_s^x+\mathbf{1}_{t \leq s}D_{t,z}X_s^x$ reaches their maximum, respectively. The pay-off function can be written as
\begin{equation*}
\Phi (X_{\tau}) = (\max_{0 \le s \le T}X_s^x - K)^{+} \mathbf{1}_{(X_{\tau} < B)},
\end{equation*}
and the Malliavin weight is obtained as follows
\begin{align}\label{W2B}
\omega_{Br} (t,z)= \frac{\mathbf{1}_{X_\tau< B} H_K (X_{\tau})\frac{\partial}{\partial x}(G_M(X^x))}{\mathcal{Q}\Big[\Phi ((X_T^x+\mathbf{1}_{t \leq T}D_{t,z}X_T^x)_{\tau^*})-\Phi(X_\tau)\Big]},
\end{align}
We should just modify that $\omega \in Dom(\delta)$. To do this, let    
\begin{equation*}
\mathcal{B}_3= \{K < G_M(X^x+D_{t,z}X^x ))< B\}, \quad  \mathcal{B}_4= \{ G_M(X^x+D_{t,z}X^x ))< B \wedge K\},
\end{equation*}
\begin{equation*}
w_{0,3}(t,z):= \frac{1_{\mathcal{B}_3}}{\mathcal{G}_3}:=\frac{1_{\mathcal{B}_3}}{G_M(X^x+D_{t,z}X^x )-G_M(X^x)}, 
\end{equation*}
and
\begin{equation*}
 w_{0,4}(t,z):= \frac{1_{\mathcal{B}_4}}{\mathcal{G}_4}:=\frac{1_{\mathcal{B}_4}}{K- G_M(X^x)} .
 \end{equation*}
 Therefore,
\begin{equation*}
\omega_{Br}(t,z)=\frac{ 1_{K < G_M(X^x))< B}\frac{\partial}{\partial x}(G_M(X^x))}{\mathcal{Q}}[1_{\mathcal{B}_3}w_{0,3}(t,z)+ 1_{\mathcal{B}_4}w_{0,4}(t,z)].
\end{equation*}
We assume the following condition in the sequence.
\begin{assumption}\label{assume3}
that there exists some $\epsilon_0 > 0$ such that 
\begin{equation*}
P\Big(\inf_{t,z}\inf_{r,z'} \big[\vert \mathcal{G}_3\vert +\vert \mathcal{G}_3+ D_{r,z'}\mathcal{G}_3 )\vert\big]> \epsilon_0\Big) =1.
\end{equation*} 
\end{assumption}
\begin{lemma}\label{lemm3}
Under Assumption \ref{assume3}, if  the function $\frac{\partial}{\partial x}(G_M(X^x))$ belongs to $\mathbb{D}^{1,p}$, for $p \geq 1$, i.e, 
$$\mathbb{E}\Big(\sup_{0 \leq r \leq t \leq T}\vert D_{r,z}\frac{\partial}{\partial x}(G_M(X^x))  \vert^{p}\Big) < \infty,$$ then the function $w$ belongs to $Dom\delta$.
\end{lemma}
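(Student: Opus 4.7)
The plan is to mirror the template of Lemma \ref{lemm1}, exploiting the inclusion $\mathbb{L}^{1,2}\subset Dom\,\delta$: it suffices to check that $\omega_{Br}$ belongs to $\mathbb{L}^{1,2}$, that is, $\omega_{Br}(t,z)\in\mathbb{D}^{1,2}$ for a.e.\ $(t,z)$, $\omega_{Br}\in L^{2}(P\times\mu)$, and the two-parameter process $D\omega_{Br}$ satisfies $\mathbb{E}\|D\omega_{Br}\|^{2}_{L^{2}((\lambda\times\mu)^{2})}<\infty$. Using the decomposition
$$\omega_{Br}(t,z)=\frac{1_{\{K<G_M(X^{x})<B\}}\,\tfrac{\partial}{\partial x}G_M(X^{x})}{\mathcal{Q}}\bigl[1_{\mathcal{B}_{3}}w_{0,3}(t,z)+1_{\mathcal{B}_{4}}w_{0,4}(t,z)\bigr]$$
given just above, I would split $\omega_{Br}=\omega_{Br}^{(3)}+\omega_{Br}^{(4)}$ according to the two indicators and treat each summand separately.

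For $\omega_{Br}^{(3)}$, I would first verify each factor lies in some $\mathbb{D}^{1,p}$: the maxima $G_M(X^{x})$ and $G_M(X^{x}+D_{t,z}X^{x})$ are Malliavin differentiable by the lemma preceding this one; the indicators $1_{\{K<G_M(X^{x})<B\}}$ and $1_{\mathcal{B}_{3}}$ are handled through Proposition \ref{pro23} as uniformly bounded differences of indicators; and $\tfrac{\partial}{\partial x}G_M(X^{x})\in\mathbb{D}^{1,p}$ by hypothesis. Then, by the chain rule for the jump Malliavin derivative,
$$D_{r,z'}\!\left(\frac{1}{\mathcal{G}_{3}}\right)=\frac{1}{\mathcal{G}_{3}+D_{r,z'}\mathcal{G}_{3}}-\frac{1}{\mathcal{G}_{3}},\qquad |D_{r,z'}(1/\mathcal{G}_{3})|\le \frac{1}{|\mathcal{G}_{3}|}+\frac{1}{|\mathcal{G}_{3}+D_{r,z'}\mathcal{G}_{3}|},$$
and Assumption \ref{assume3} yields the pathwise lower bound $|\mathcal{G}_{3}|+|\mathcal{G}_{3}+D_{r,z'}\mathcal{G}_{3}|>\epsilon_{0}$ on a set of full probability. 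Combining this with Proposition \ref{pro22} (product rule) and the moment estimates from \eqref{ytp}, \eqref{equa1}, \eqref{inversexp}, Corollary \ref{duboundd} and \eqref{dminx}, I would conclude via Hölder's inequality that both $\|\omega_{Br}^{(3)}\|_{L^{2}(P\times\mu)}$ and $\mathbb{E}\|D\omega_{Br}^{(3)}\|^{2}_{L^{2}((\lambda\times\mu)^{2})}$ are finite.

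For $\omega_{Br}^{(4)}$ the argument is essentially identical, the key point being that on the support of $1_{\{K<G_M(X^{x})<B\}}\cdot 1_{\mathcal{B}_{4}}$ one has the strict chain of inequalities $G_M(X^{x}+D_{t,z}X^{x})<K<G_M(X^{x})<B$, so that $\mathcal{G}_{4}=K-G_M(X^{x})$ stays bounded away from zero on this set; an analogous version of Assumption \ref{assume3} for $\mathcal{G}_{4}$ (or direct use of the gap provided by the indicator) then lets the same chain-rule and product-rule computation go through with $\mathcal{G}_{4}$ replacing $\mathcal{G}_{3}$.

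The main obstacle is precisely the estimate of the reciprocal denominators: Assumption \ref{assume3} must be exploited by partitioning the parameter space into $\{|\mathcal{G}_{3}|\ge\epsilon_{0}/2\}$ and $\{|\mathcal{G}_{3}+D_{r,z'}\mathcal{G}_{3}|\ge\epsilon_{0}/2\}$ and controlling $1/\mathcal{G}_{3}$ or $1/(\mathcal{G}_{3}+D_{r,z'}\mathcal{G}_{3})$ on the respective pieces; the remaining integrability reduces to repeated Hölder estimates using the hypothesis $\tfrac{\partial}{\partial x}G_M(X^{x})\in\mathbb{D}^{1,p}$ together with the uniform $L^{p}$ bounds on $Y_{T}^{\pm 1}$, $D_{r,z}X_{t}^{x}$ and $D_{r,z}G_M(X^{x})$ already established in the paper.
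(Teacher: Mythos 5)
Your proposal follows essentially the same route as the paper's proof: reduce the problem to the summands $w_{0,3}$ and $w_{0,4}$, use the product rule (Proposition \ref{pro22}) together with the difference formula of Proposition \ref{pro23} to obtain the bound $\vert D_{r,z'}(1/\mathcal{G}_3)\vert \le 1/\vert\mathcal{G}_3\vert + 1/\vert\mathcal{G}_3 + D_{r,z'}\mathcal{G}_3\vert$, and then conclude via Assumption \ref{assume3}, the hypothesis on $\frac{\partial}{\partial x}G_M(X^x)$, and the moment estimates of Corollary \ref{coro}, \eqref{ytp}, \eqref{equa1}. The only caveat is your side remark that the indicators force $\mathcal{G}_4 = K-G_M(X^x)$ to stay bounded away from zero (they only give $G_M(X^x)>K$, not a uniform gap), but since you also propose an analogue of Assumption \ref{assume3} for $\mathcal{G}_4$, this is at the same level as the paper, which settles $w_{0,4}$ with a one-line ``similarly''.
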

\begin{proof}
Accodring to Corollary \ref{coro}, 
$$\mathbb{E}\Big(\sup_{0 \leq t \leq T}\vert \frac{\partial}{\partial x}(G_M(X^x))  \vert^{p} \Big) \leq \mathbb{E}\Big(\sup_{0 \leq t \leq T}\vert \frac{\partial}{\partial x}X^x  \vert^{p}\Big) < \infty.$$
So, it is sufficient to show that $w_{0,3}$ and $w_{0,4}$ belong to $Dom\delta$.
We first note that for every $0 \leq r \leq t$ and $z'\in \mathbb{R}_0$
\begin{align*}
D_{r,z'} w_{0,3} (t,z)& = 1_{\mathcal{B}_3} D_{r,z'} \frac{1}{\mathcal{G}_3} + \frac{1}{\mathcal{G}_3} D_{r,z'} 1_{\mathcal{B}_3}+D_{r,z'} 1_{\mathcal{B}_3} D_{r,z'} \frac{1}{\mathcal{G}_3}\\
& =  - \frac{(1_{\mathcal{B}_3}+D_{r,z'}1_{\mathcal{B}_3}) D_{r,z'}\mathcal{G}_3}{\mathcal{G}_3^2+\mathcal{G}_3 D_{r,z'}\mathcal{G}_3 }+  \frac{1}{\mathcal{G}_3} D_{r,z'} 1_{\mathcal{B}_3},
\end{align*}
and therefore 
\begin{align*}
\vert D_{r,z'} w_{0,3} (t,z) \vert^p &\leq 2^{p}  \Big( \vert \frac{1}{\mathcal{G}_3}+ \frac{1}{\mathcal{G}_3+D_{r,z}\mathcal{G}_3}\vert^p + \vert \frac{1}{\mathcal{G}_3}\vert^p \Big)\\
& \leq 2^{p-1} \Big(  \vert \frac{2}{\mathcal{G}_3}\vert^p+\vert \frac{1}{\vert D_{r,z}\mathcal{G}_3}\vert^p + \vert \frac{1}{\mathcal{G}_3}\vert^p \Big),
\end{align*}
and Assumption \ref{assume3}, conclude that $w_{0,3} \in Dom\delta$. Similarly, one can show that $w_{0,4} \in Dom\delta$ and complete the proof.
\end{proof}
 \end{example}
\section{Numerical Scheme and the convergence of Delta}
\noindent Significantly, the Euler scheme is employed to simulate $X_t ^x$ in the implementation of numerical experiments. Therefore, before presenting the numerical results, we provide a formal analysis of the convergence of Euler method.\\
\noindent Let $X^{n,x}_t$ represents the solution derived from the Euler scheme for any $n \in \mathbb{N}$ and partition $[0, T]$ with $t_i=\frac{it}{n}$, $i=1, \cdots, n$, as demonstrated in the following
\begin{align}\label{Eulersde}
X_{t_{k+1}}^{n,x} = X_{t_k}^{n,x} &+ b(t_k, \rho_{t_k} )X_{t_k}^{n,x} \Delta t   
+ \sum_{n=1}^d \sigma_n(t_k, X_{t_k}^{n,x}, \pi_{t_k}) \Delta W_k^n \notag \\
&+ \sum_{j=1}^{\mathcal{N}_k} \lambda(t_k, X_{t_k}^{n,x}, z_j, \eta_{t_k}),  
\end{align}
\noindent where $\Delta W_k^n = W_{t_{k+1}}^n - W_{t_k}^n$ representing the Brownian increments, and $\mathcal{N}_k \sim \text{Poisson}(\nu \Delta t)$ is the number of jumps in $[t_k, t_{k+1}]$ and $z_j$ is the jump size in $j$-th jump. 
For every $t \in [t_k, t_{k+1}]$, define the continuous version 
\begin{align*}
X_{t}^{n,x} = X_{t_k}^{n,x} &+ \int_{t_k}^t b(t_k, \rho_{t_k} )X_{t_k}^{n,x} ds  
+ \sum_{n=1}^d  \int_{t_k}^t \sigma_n(t_k, X_{t_k}^{n,x}, \pi_{t_k}) dW_s^n \notag \\
&+  \int_{t_k}^t \int_{\mathbb{R}_0} \lambda(t_k, X_{t_k}^{n,x}, z_k, \eta_{t_k}){\tilde{N}}(dz_k, ds).
\end{align*}
Let
\begin{equation}\label{Yyn}
d{Y_t ^{n,x}}={A_t ^{n,x}}Y_t ^{n,x} dt + \sum_{m=1}^d B_t ^{n,x,m}Y_t ^{n,x}d{W_t ^{m}} + \int_{\mathbb{R}_{0}}^{} {M_{t,z}^{n,x}}Y_t ^{n,x}{\tilde{N}}(dz, dt), 
\end{equation}
\begin{align*}
du^n(t)={A_t ^{n,*}} {(Y_t ^{n,x})}^{-1} dt + \sum_{m=1}^{d} \beta_t^{n,x,m}{( Y_t ^{n,x})}^{-1} d{W_t ^m} + \int_{{\mathbb{R}}_{0}}^{}  M_{t,z} ^{n,*}{(Y_t ^{n,x})}^{-1}\,{\tilde{N}}(dz,dt),
\end{align*}
where $A_t ^{n,x}, B_t ^{n,x,m}, M_{t,z}^{n,x},$ and $A_t ^{n,*}, \beta_t^{n,x,m}, M_{t,z} ^{n,*}$ are the same as $A_t ^{x}, B_t ^{x,m}, M_{t,z}^{n,x},$ and $A_t ^{*}, \beta_t^{x,m}, M_{t,z} ^{*}$, respectively, when we substitute the $ X_t^{n,x}$ instead of $X_t^x$.
\noindent In \cite{sun2021numerical}, Sun and Zhao proved that, under assumptions  A1–A4, the strong convergence order of the strong order-$\gamma$ Taylor scheme is $\gamma$, specifically established
\begin{align} \label{Taylor} 
\max_{k=1, \cdots , N} \mathbb{E}\Big(|S^x_{t_k} - {S}_{t_k}^{n,x}|^{2\gamma}\Big)   
&\leq C(\Delta t)^\gamma, \quad \max_{k=1, \cdots , N} \mathbb{E}\Big(| {S}_{t_k}^{n,x}|^{2\gamma}\Big)   
&\leq C.  
\end{align} 
In the following, we will analyze the strong convergence order of $X_t$ under Assumption \ref{assum1}, utilizing the results of \cite{sun2021numerical}.
\begin{theorem}\label{thmthm}
\noindent Under Assumption \rm{\ref{assum1}}, the Euler scheme $X_{t}^{n}$ converge to the true solution with order $\frac12$, i.e., it holds that for every $p \geq 2$ 
\begin{align}\label{Eulerorder}
 \mathbb{E}\left(\sup_{0 \le t \leq T}  \lvert X_{t}^x - {X}_{t} ^{n,x} \rvert^{2} \right)   
\leq C  (\Delta t). 
\end{align}
\end{theorem}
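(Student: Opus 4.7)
The plan is to reduce the problem to the convergence estimate \eqref{Taylor} already cited for the Euler scheme of the auxiliary process $S_t$, by exploiting the deterministic nature of $\mathbb{E}(X_t^x)$ and the bi-Lipschitz change of variables $X_t^x = \frac{f(t)}{x} S_t^x$ established in Theorem \ref{XT}. If this reduction succeeds at order $\tfrac12$, the bound \eqref{Eulerorder} follows immediately because $f$ and $f^{-1}$ are bounded.

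First I would record that by Theorem \ref{XT} we have $\rho_{X_s^x} = \mathbb{E}(X_s^x) = f(s)$, so the drift in \eqref{sde1} is just the deterministic multiplier $b(s,f(s))X_s^x$. Taking expectations in \eqref{Eulersde} shows that $\rho_{t_k}^{n} := \mathbb{E}(X_{t_k}^{n,x})$ is the classical explicit Euler approximation to the ODE \eqref{ft}; since $b(\cdot,f(\cdot))$ is bounded, standard ODE theory gives $\sup_{0 \le k \le n}|\rho_{t_k}^{n} - f(t_k)| \le C\Delta t$. Thus, up to an $O(\Delta t)$ perturbation, both the continuous drift and the discrete drift are the \emph{same} deterministic time-dependent coefficient acting on the state.

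Next I would define the natural discrete transform $\tilde S_t^{n,x} := xf^{-1}(t)X_t^{n,x}$, apply It\^o's formula on each subinterval $[t_k,t_{k+1}]$, and use \eqref{ft} to show that $\tilde S_t^{n,x}$ obeys a scheme of the form of the Euler discretization of \eqref{sde2} driven by the coefficients $\tilde\sigma_n,\tilde\lambda$ of \eqref{tilde}, plus two error terms: one encoding the time freezing on $[t_k,t_{k+1}]$, the other encoding the ODE error $\rho_{t_k}^n - f(t_k)$ propagated through the mean-field entries $\pi,\eta$. Assumption \ref{assum1} (Lipschitz continuity of $\sigma_n,\lambda,\psi,\xi$ with the $p$-integrable bound $\varrho$, plus linear growth of $b$) together with the moment bound \eqref{equasup} controls both perturbations in $L^2$ at order $O(\Delta t)$; then invoking \eqref{Taylor} with $\gamma=1$ yields $\mathbb{E}\sup_{0\le t\le T}|S_t^x - \tilde S_t^{n,x}|^2 \le C\Delta t$, and boundedness of $f$ transports this back to \eqref{Eulerorder}.

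The main technical obstacle is the mean-field arguments $\pi_{t_k}^n$ and $\eta_{t_k}^n$ in the scheme, which depend on the (unknown) law of $X^{n,x}$ itself. To close the estimate one must convert the splitting of each integrand into the form $[\mathrm{coef}(s,X_s^x,\cdot)-\mathrm{coef}(t_{k(s)},X_{t_{k(s)}}^{n,x},\cdot)]$ and apply Lipschitz continuity of $\psi,\xi$ to get $|\pi_{X_s^x}-\pi_{t_{k(s)}}^{n}|^2 \le K^2\,\mathbb{E}|X_s^x - X_{t_{k(s)}}^{n,x}|^2$, and similarly for $\eta$; together with the time-regularity estimate $\mathbb{E}|X_s^x - X_{t_{k(s)}}^x|^2 = O(\Delta t)$ (a direct consequence of Assumption \ref{assum1} and Lemma \ref{t1}) and BDG for the Brownian and compensated-Poisson pieces, the Gronwall inequality applied to $\phi(t) := \mathbb{E}\sup_{0\le u\le t}|X_u^x - X_u^{n,x}|^2$ closes the recursion at the claimed rate $O(\Delta t)$, giving \eqref{Eulerorder}.
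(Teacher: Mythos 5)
Your overall strategy is the same as the paper's: pass to the auxiliary process $S$ through the deterministic factor built from $f$, use the Lipschitz conditions of $\tilde\sigma,\tilde\lambda$, a time-regularity estimate of order $\Delta t$ on the scheme, BDG and Gronwall to get the mean-square rate $\Delta t$ for the $S$-level comparison, and then transport back to $X$ using boundedness of $f$ and the drift exponential. Where you differ is in how the two schemes are bridged. The paper effectively discretizes $S$ (i.e.\ it compares $e^{\int_0^t b(s,\rho_{X_s})ds}S_t$ with $e^{\int_0^t b(s,\rho_{X^n_s})ds}S^n_t$, $S^n$ being the Euler scheme of \eqref{sde2}), and then controls the difference of the exponentials via Lipschitz continuity of $b$ and \eqref{Taylor}; this leaves the law-dependence $\rho_{X^n_s}$ somewhat implicit, since $\mathbb{E}(X^n_s)$ is exactly the quantity being estimated. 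You instead start from the scheme \eqref{Eulersde} on $X$ itself, transform it by $xf^{-1}(t)$, and crucially observe that $\mathbb{E}(X^{n,x}_{t_k})$ is the deterministic explicit-Euler approximation of the ODE \eqref{ft}, so that $\sup_k|\rho^n_{t_k}-f(t_k)|=O(\Delta t)$ is available \emph{before} any stochastic Gronwall step; this removes the circularity in handling the drift exponent and is, in that respect, a cleaner treatment of the mean-field terms than the paper's. Two caveats: the appeal to \eqref{Taylor} with $\gamma=1$ is a mis-citation, since the Euler scheme is the strong order-$\tfrac12$ Taylor scheme ($\gamma=\tfrac12$ only gives an $L^1$ bound of order $(\Delta t)^{1/2}$); the mean-square rate $\Delta t$ must come from your own freezing-error plus Gronwall argument (as it does in the paper's proof for $S$), so you should lean on that rather than on \eqref{Taylor}. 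Also note that both your ODE-Euler step and the paper's final estimate quietly use Lipschitz-type continuity of $b$ in its measure argument, which goes beyond the linear growth required of $b$ in Assumption \ref{assum1}; the paper makes the same implicit strengthening, but it is worth stating it explicitly. Finally, be aware that the jump term in \eqref{Eulersde} is written uncompensated while the continuous interpolation uses $\tilde N$; your expectation argument needs the compensated version (or an explicit drift correction), a discrepancy the paper also leaves unaddressed.
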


\begin{proof}
\noindent Building upon the definition of $S_t$ and the relation \eqref{Taylor} there exists some constant $C_1$ that
\begin{align}
	\mathbb{E}\Big({\big|{S_t ^n -S_{t_k} ^n}\big|^2}\Big) & = \mathbb{E}\Big(\big|{\int_{t_k}^{s}{\tilde{\sigma}(t_k, S ^n_{t_k}, \pi_{S ^n_{t_k}})}}\big|^2\Big) + \mathbb{E}\Big(\big|{\int_{t_k}^{s}{\tilde{\lambda}(t_k, S ^n_{t_k}, z, \eta_{S_{t_k}})}}\big|^2\Big)\notag\\
	& \le \mathbb{E}\Big({|\tilde{\sigma}(t_k, S ^n_{t_k}, \pi_{S ^n_{t_k}})|^2 (W_s - W_{t_k})^2}\Big) \notag\\
	&+ \mathbb{E}\Big({|\tilde{\lambda}(t_k, S ^n_{t_k}, z, \eta_{S ^n_{t_k}})|^2 \vert\int_{t_k}^{s}\int_{\mathbb{R}-0}{\tilde{N}(dz, dt)\vert^2}}\Big)\notag\\
	& \le C_1 (\Delta t)^2 + C_1 (\Delta t). \label{aaa}
\end{align}  
Fom Lipschitz property of $\tilde\sigma$ and $\tilde\lambda$, alongside the relation \eqref{Taylor} and \eqref{aaa} for some constant $C_2$
\begin{align*}  
&\mathbb{E}\Big(\sup_{t_k \le t \leq t_{k+1}}\lvert S_{t} - S_{t}^{n} \rvert^{2} \Big) \notag\\
& \le \mathbb{E}\Big(\vert \sup_{t_k \le t \le t_{k+1}}\int_{t_k}^{t} \big| \tilde\sigma(s, S_s, \pi_{S_s}) - \tilde\sigma(t_k, S_{t_k}^{n}, \pi_{S_{t_k}^{n}}) \big| dW_s \vert^2\Big) \notag\\
& + \mathbb{E}\Big(\sup_{t_k \le t \le t_{k+1}}\big\vert \int_{t_k}^{t}\int_{\mathbb{R}_0} \big| \tilde\lambda(s, S_s, z, \eta_{S_s}) - \tilde\lambda(t_k, S_{t_k}^{n}, z, \eta_{S_{t_k}^{n}}) \big| \tilde{N}(dz, ds)\big\vert ^{2}\Big) \notag\\
& \le \mathbb{E}\Big(\int_{t_k}^{t_{k+1}}\big| \tilde\sigma(t, S_s, \pi_{S_s}) - \tilde\sigma(t_k, S_{t_k}^{n}, \pi_{S_{t_k}^{n}}) \big|^{2} ds\Big) \notag\\
& + \mathbb{E}\Big(\int_{t_k}^{t_{k+1}}\int_{\mathbb{R}_0} \big| \tilde\lambda(s, S_s, z, \eta_{S_s}) - \tilde\lambda(t_k, S_{t_k}^{n}, z, \eta_{S_{t_k}^{n}}) \big|^{2} \mu(dz) ds\Big) \notag\\
& \le 2\int_{t_k}^{t_{k+1}}\Big[ \mathbb{E}\Big(\sup_{0 \le u \le s}\big| S_u - S_u ^n \big|^{2}\Big)+\mathbb{E}\Big(\big| S_s^n - S_{t_k}^n \big|^{2}\Big) + \mathbb{E}\Big(\big| S_s - S_s ^n  \big|\Big)^{2} \Big]ds \notag\\
& + 2\int_{t_k}^{t_{k+1}}\int_{\mathbb{R}_0}\Big[\mathbb{E}\Big( \sup_{0 \le u \le s} \big| S_u - S_u ^n \big|^{2}\Big)+\mathbb{E}\Big(\big| S_s^n - S_{t_k}^n \big|^{2}\Big)  + \mathbb{E}\Big(\big| S_s - S_s ^n  \big|\Big)^{2}\Big] \mu(dz) ds\notag\\
& \le C_2 + 8{\int_{t_k}^{t_{k+1}}\mathbb{E}\Big( \sup_{0 \le u \le s} \big| S_u - S_u ^n \big|^{2}\Big) ds},
\end{align*}
\noindent utilizing Gronwall inequality, we can deduce that
\begin{align*}
\mathbb{E}\Big(\sup_{0 \le t \leq T}\lvert S_{t} - S_{t}^{n} \rvert^{2} \Big) \to 0, \quad \quad n\to \infty.
\end{align*}

\noindent Consequently, from Theorem \ref{XT}, we can now proceed as
\begin{align*}  
	\mathbb{E}\left( \sup_{0 \le t \leq T}\lvert X_{t}^x - {X}_{t} ^{n,x} \rvert^{2} \right)  
	&= \mathbb{E}\left( \sup_{0 \le t \leq T}\left\lvert e^{\int_0^{t} b(s, \rho_{X_s}) \, ds} S_{t}   
	- e^{\int_0^{t} b(s, \rho_{{X}_s ^n}) \, ds } {S}_{t} ^n \right\rvert^{2} \right) \notag \\
	&= \mathbb{E}\Big(\sup_{0 \le t \leq T} \Big\lvert e^{\int_0^{t} b(s, \rho_{X_s}) \, ds} S_{t} - e^{\int_0^{t} b(s, \rho_{X_s}) \, ds} S_{t} ^n \notag\\
	&\quad\quad\quad + e^{\int_0^{t} b(s, \rho_{X_s}) \, ds} S_{t} ^n  - e^{\int_0^{t} b(s, \rho_{{X}_s ^n}) \, ds } {S}_{t} ^n \Big\rvert^{2} \Big) \notag \\
	&\le\Big( e^{{2} \int_0^{t} b(s, \rho_{X_s}) \, ds }\Big)\mathbb{E}\Big(\sup_{0 \le t \leq T}\lvert S_{t} - {S}_{t}^{n} \rvert^{2} \Big) \notag \\
	&\quad \quad \quad + \mathbb{E}\Big(\sup_{0 \le t \leq T} \lvert {S}_{t} ^n\rvert^{2}\Big) \Big \lvert e^{\int_0^{t} b(s, \rho_{X_s}) \, ds }- e ^{\int_0^{t} b(s, \rho_{{X}_s ^n}) \, ds} \Big\rvert^{2}. \notag\\
\end{align*}
Assuming the Lipschitz continuity of \(b(t, \rho_{X_t^x})\), the relation (\ref{Taylor}), invoking Lemma \ref{t1}, the inequality \(|e^x - e^y| \leq |e^x + e^y||x - y|\), and the boundedness of \(f(t)\), we derive the following
\begin{align*}
\mathbb{E}\left(\sup_{0 \le t \leq T} \lvert X_{t}^x - {X}_{t} ^{n,x} \rvert^{2} \right) &\le C_1 (\Delta t) + C_1 \Big| {\int_{0}^{t} b(s, \rho_{{X}_s}) \, ds} - {\int_{0}^{t} b(s, \rho_{{X}_s ^{n}}) \, ds}\Big|^{2}\notag\\
&\le C_3 (\Delta t) + C_3 (\Delta t)^{2},
\end{align*}
\noindent  where $C_3$ is a constant. This inequality completes the proof.
\end{proof}
\noindent 
We have established the validity of statement $w(t, z)$, $X_t$, and $D_{r,z} X_t$ belong to $L^p (P)$. Similarly, we can show that $w^n(t, z)=w(t,z)\Big\vert_{X_t^x= X_t^{n,x}}$, $X^{n,x}_t$, $\frac{\partial}{\partial x}X_t^{n,x}$and $D_{r,z} X^{n,x}_t$ belong to $L^p (P)$, for any $p \geq 2$.\\
Additionally, based on our discussion, $D_{r,z} X^{n,x}_t$ and $Y_t^{x,n}$ are solutions to (\ref{Yyn}) with the initial conditions $\lambda(r, X^{n,x}_r, z, \eta_{X^{n,x}_r})$ and $1$, respectively. Therefore, by applying the same proof of Theorem \ref{thmthm} we can conclude that
\begin{align}  
&\max_{k \in \{1, 2, \dots, N\}} \mathbb{E}\Big( \lvert D_{r,z} X^x_{t_k} - D_{r,z} X_{t_k}^{n,x} \rvert^2 \Big)   
\leq C \Big( 1 + \mathbb{E}\Big(\lvert \lambda(r, X_r, z, \eta_{X_r}) \rvert^2 \Big) \Big) \Delta t,  \label{numdx}\\
&\max_{k \in \{1, 2, \dots, N\}} \mathbb{E}\Big( \lvert Y^x_{t_k} - Y_{t_k}^{n,x} \rvert^2 \Big)  
\leq 2C \Delta t.  \notag
\end{align}
In the same way we show \eqref{equa1}, we can see that $\vert Y_t^{n,x} \vert^{-1}$ belongs to $L^p(P)$, for every $p\geq 2$. Here, we show the convergence of $\frac{\partial}{\partial{x}}{X_t ^{n,x}}$ to $\frac{\partial}{\partial{x}}{X_t ^x}$. 
\begin{lemma}\label{lempartialxxn}
Under Assumption \ref{assum1} and \ref{assume2}, for every $p\geq 2$ we have
$$\lim_{n \rightarrow \infty} \mathbb{E}\Big( \sup_{0 \leq t \leq T}\lvert \frac{\partial}{\partial{x}}{X_t ^x} -\frac{\partial}{\partial{x}}{X_t ^{n,x}}  \rvert^p \Big) =0 .$$
\end{lemma}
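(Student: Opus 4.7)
The plan is to work with the linear SDE \eqref{sde3} for $\tfrac{\partial}{\partial x}X_t^x$ and its numerical counterpart for $\tfrac{\partial}{\partial x}X_t^{n,x}$, interpreted as $Y_t^{n,x}u^n(t)$ (which by Itô's product rule, mirroring Lemma \ref{lem1} applied to the Euler-based coefficients, satisfies the same SDE with every appearance of $X_t^x$ replaced by $X_t^{n,x}$). Setting $D_t := \tfrac{\partial}{\partial x}X_t^x - \tfrac{\partial}{\partial x}X_t^{n,x}$ and decomposing each coefficient product as
\begin{align*}
A_s^x \tfrac{\partial}{\partial x}X_s^x - A_s^{n,x}\tfrac{\partial}{\partial x}X_s^{n,x} = A_s^x D_s + (A_s^x - A_s^{n,x})\tfrac{\partial}{\partial x}X_s^{n,x},
\end{align*}
and likewise for the $B$, $M$, $\alpha$, $\beta$, $\gamma$ terms, the first summand of each pair produces a Gronwall-type contribution thanks to the uniform bounds \eqref{eq15}--\eqref{eq11} together with the uniform $L^p$-bound on $\tfrac{\partial}{\partial x}X_t^{n,x}$ from Corollary \ref{coro}.

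To control the second summand, I would invoke Assumption \ref{assume2}: Lipschitz continuity of $\partial_1 b$, $\partial_i\sigma_n$, $\partial_i\lambda$ (with the integrable weight $\varrho$ controlling the jump part) yields, for each $q \geq 1$,
\begin{align*}
\mathbb{E}\Big(\sup_{s\leq t}|A_s^x - A_s^{n,x}|^q\Big) + \mathbb{E}\Big(\sup_{s\leq t}|B_s^{x,m} - B_s^{n,x,m}|^q\Big) + \mathbb{E}\Big(\sup_{s\leq t}\!\int_{\mathbb{R}_0}\!|M_{s,z}^x - M_{s,z}^{n,x}|^q\mu(dz)\Big) \leq C\,\mathbb{E}\Big(\sup_{s\leq t}|X_s^x - X_s^{n,x}|^q\Big),
\end{align*}
and the right-hand side is $O((\Delta t)^{q/2})$ by the natural $L^q$-extension of Theorem \ref{thmthm}, whose proof carries over verbatim after applying BDG at order $q$ in place of order $2$.

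The main obstacle is the $\alpha$-term (and analogously the $\beta$, $\gamma$ terms). Since $\tfrac{\partial}{\partial x}\rho_{X_s^x} = \mathbb{E}(\tfrac{\partial}{\partial x}X_s^x)$ by dominated convergence together with the uniform $L^p$-bound of Corollary \ref{coro}, the decomposition
\begin{align*}
\alpha_s^x - \alpha_s^{n,x} = \big(\partial_1 b(s,\rho_{X_s^x})X_s^x - \partial_1 b(s,\rho_{X_s^{n,x}})X_s^{n,x}\big)\,\mathbb{E}\big(\tfrac{\partial}{\partial x}X_s^x\big) + \partial_1 b(s,\rho_{X_s^{n,x}})X_s^{n,x}\,\mathbb{E}(D_s)
\end{align*}
reintroduces $D_s$ in the source term, creating a self-referential contribution that must be absorbed into Gronwall; the same mechanism applies for $\beta$ and $\gamma$ via $\pi_{X_s^x}$ and $\eta_{X_s^x}$. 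After applying BDG at order $p$ to the Brownian and Poisson martingale parts, Hölder's inequality to the Lebesgue integrals, and Jensen's inequality $|\mathbb{E}(D_s)| \leq \mathbb{E}^{1/p}|D_s|^p$, setting $\phi(t) := \mathbb{E}(\sup_{0 \leq s \leq t}|D_s|^p)$ yields
\begin{align*}
\phi(t) \leq C(\Delta t)^{p/2} + C\int_0^t \phi(s)\,ds.
\end{align*}
Gronwall's lemma then delivers $\phi(T) \leq C(\Delta t)^{p/2} \to 0$ as $n \to \infty$, which is the claim.
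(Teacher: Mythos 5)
Your proposal follows essentially the same route as the paper's proof: write the SDE for the difference of the two flows, split each coefficient into a term producing a Gronwall contribution plus a difference term controlled by the Lipschitz bounds of Assumption \ref{assume2}, handle the mean-field terms $\alpha,\beta,\gamma$ through $|\mathbb{E}(D_s)|\le \mathbb{E}^{1/p}\big(|D_s|^p\big)$ so they are absorbed into Gronwall, and close with BDG, the Euler convergence of Theorem \ref{thmthm}, and Gronwall's lemma. The only (harmless) additions are your explicit $(\Delta t)^{p/2}$ rate and the $L^q$-extension of Theorem \ref{thmthm}, which the paper also uses implicitly in its $p$-th power estimates but does not state separately.
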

\begin{proof}
We first know that 
\begin{align}
\frac{\partial}{\partial{x}}{X_t ^x} -\frac{\partial}{\partial{x}}{X_t ^{n,x}} &= \int_{0}^{t}\Big( {A_s ^x}\frac{\partial}{\partial x}X_s ^x -A_s ^{n,x}\frac{\partial}{\partial x}X_s ^{n,x}+ \alpha _s ^x-\alpha _s ^{n,x}\Big)ds \notag\\
&+ \sum_{m=1}^{d}\int_{0}^{t}\Big( C_s(\frac{\partial}{\partial x}X_s ^x-\frac{\partial}{\partial x}X_s ^{n,x}) + \beta _s ^{x,m}- \beta _s ^{n,x,m}\Big)d{W_s ^n}\notag\\
&+ \int_{0}^{t}\int_{\mathbb{R}_0}^{}\Big( {{F_{s,z}}(\frac{\partial}{\partial x}X_s ^x-\frac{\partial}{\partial x}X_s ^{n,x})+ \gamma _s ^x-\gamma _s ^{n,x}}\Big)\tilde{N}(dz, ds), \label{partialxxn}
\end{align}
where
\begin{align*}
\alpha _s ^{n,x}& := \partial_1 b(s, \rho_{X_s ^{n,x}}){X_s ^{n,x}}\frac{\partial}{\partial x}\rho_{X_s^{n,x}},  \qquad 
\beta _t ^{n,x,m}:= \partial_2{\sigma_n}(t, X_t ^{n,x}, \pi_{X_t ^x})\frac{\partial}{\partial x}\pi_{X_t ^x},\\
\gamma _t ^{n,x}& := \partial_3{\lambda}(t, X_t ^{n,x}, z, \eta_{X_t ^{n,x}})\frac{\partial}{\partial x}\eta_{X_t ^{n,x}} . 
\end{align*}
In accordance with Assumption \ref{assume2}, there exists some another constant $C$ that 
 \begin{align}
& \vert \alpha _s ^{n,x}-\alpha _s ^{x}\vert^p\notag\\
 &\leq  3^{p-1}\Big(\vert  \partial_1 b(s, \rho_{X_s ^{n,x}}) \vert^p \vert X_t ^{n,x}-X_t ^{x} \vert^p+\vert  \partial_1 b(s, \rho_{X_s ^{n,x}})- \partial_1 b(s, \rho_{X_s ^{x}})\vert^p \notag\\
 & \hspace{2cm}\vert X_t ^x \vert^p\vert \frac{\partial}{\partial x}\rho_{X_s^{x}}\vert^p +\vert \frac{\partial}{\partial x}\rho_{X_s^{n,x}}-\frac{\partial}{\partial x}\rho_{X_s^{x}}\vert^p \vert \partial_1 b(s, \rho_{X_s ^{n,x}}){X_s ^{n,x}}\vert^p \Big)\notag\\
& \leq  3^{p-1}C \Big(\vert X_t ^{n,x}-X_t ^{x} \vert^p +\mathbb{E} \Big(\vert  \frac{\partial}{\partial{x}}{X_t ^x} -\frac{\partial}{\partial{x}}{X_t ^{n,x}}\vert^p\Big)\notag \\
&+ 3^{p-1}C\vert X_t ^{n,x}\vert^p \mathbb{E}\Big(\vert  \frac{\partial}{\partial{x}}{X_t ^x} -\frac{\partial}{\partial{x}}{X_t ^{n,x}}\vert^p\Big)\Big),\label{alphaxxn}
\end{align}
and similarly,
\begin{align}
& \vert \beta _s ^{x,m}- \beta _s ^{n,x,m}\vert^p \leq C2^{p-1}\Big( \mathbb{E}\Big(\vert  \frac{\partial}{\partial{x}}{X_t ^x} -\frac{\partial}{\partial{x}}{X_t ^{n,x}}\vert^p\Big)+ \mathbb{E}\Big(\vert  X_t ^{n,x} -{X_t ^{n,x}}\vert^p\Big)\Big), \label{betaxxn}
\end{align}
\begin{align}
 \vert \gamma _s ^{x}- \gamma _s ^{n,x}\vert^p \leq C2^{p-1}\Big( \mathbb{E}\Big(\vert  \frac{\partial}{\partial{x}}{X_t ^x} -\frac{\partial}{\partial{x}}{X_t ^{n,x}}\vert^p\Big)+ \mathbb{E}\Big(\vert  X_t ^{n,x} -{X_t ^{n,x}}\vert^p\Big)\Big). \label{gammaxxn}
\end{align}
Now, taking expectation of the equation \eqref{partialxxn} to the power $p$, utilize Burkholder-Davis-Gundy inequality to derive 
\begin{align*}
\mathbb{E}\Big(&\sup_{0 \le t \leq T}\vert \frac{\partial}{\partial{x}}{X_t ^x} -\frac{\partial}{\partial{x}}{X_t ^{n,x}}\vert^p\Big) \\
&\leq C\int_{0}^{T}\mathbb{E}\Big( \vert{A_s ^x}\frac{\partial}{\partial x}X_s ^x -A_s ^{n,x}\frac{\partial}{\partial x}X_s ^{n,x}\vert^p\Big)ds + C\int_{0}^{T}\mathbb{E}\Big(\vert \alpha _s ^x-\alpha _s ^{n,x}\vert^p\Big)ds \\
&+CT\sum_{m=1}^{d} \int_{0}^{T}\mathbb{E}\Big(\vert\beta _s ^{x,m}- \beta _s ^{n,x,m}\vert^p\Big)ds+ CdT^p \int_{0}^{T}\mathbb{E}\Big(\vert C_s(\frac{\partial}{\partial x}X_s ^x-\frac{\partial}{\partial x}X_s ^{n,x}\vert^p\Big)ds \\
&+ C\int_{0}^{T}\int_{\mathbb{R}_0}^{}\mathbb{E}\Big(\vert F_{s,z}(\frac{\partial}{\partial x}X_s ^x-\frac{\partial}{\partial x}X_s ^{n,x})\vert^p\Big)\mu(dz)ds\\
&+C\int_{0}^{T}\int_{\mathbb{R}_0}^{}\mathbb{E}\Big(\vert \gamma _s ^x-\gamma _s ^{n,x}\vert^p\Big)\mu(dz)ds,
\end{align*}
Substitute the bound \eqref{alphaxxn}, \eqref{betaxxn} and \eqref{gammaxxn} into the above inequality and then apply lipschitz condition of $b$ to result ($C$ is a constant can be different each time)
\begin{align*}
\mathbb{E}\Big(&\sup_{0 \le t \leq T}\vert \frac{\partial}{\partial{x}}{X_t ^x} -\frac{\partial}{\partial{x}}{X_t ^{n,x}}\vert^p\Big) \\
&\leq C\int_{0}^{T}\mathbb{E}\Big( \vert{A_s ^x}\frac{\partial}{\partial x}X_s ^x -A_s ^{n,x}\frac{\partial}{\partial x}X_s ^{n,x}\vert^p\Big)ds + C\int_{0}^{T} \mathbb{E}\Big(\vert  X_t ^{n,x} - {X_t ^{n,x}}\vert^p\Big)ds \\
&+ C\int_{0}^{T}( C_s^p+\int_{\mathbb{R}_0}F^p_{s,z}\mu(dz)+3^{p+1})\mathbb{E}\Big(\vert\frac{\partial}{\partial x}X_s ^x-\frac{\partial}{\partial x}X_s ^{n,x}\vert^p\Big)ds \\
&\leq C\int_{0}^{T} \mathbb{E}\Big(\vert  X_t ^{n,x} - {X_t ^{n,x}}\vert^p\Big)\mathbb{E}\Big( \vert\frac{\partial}{\partial x}X_s ^{x}\vert^p\Big)ds + C\int_{0}^{T} \mathbb{E}\Big(\vert  X_t ^{n,x} - {X_t ^{n,x}}\vert^p\Big)ds \\
&+ C\int_{0}^{T}\Big((A_s^x)^p+C_s^p+\int_{\mathbb{R}_0}F^p_{s,z}\mu(dz)+3^{p+1}\Big)\mathbb{E}\Big(\vert\frac{\partial}{\partial x}X_s ^x-\frac{\partial}{\partial x}X_s ^{n,x}\vert^p\Big)ds. \\
\end{align*}
The Gronwall inequality and Theorem \ref{thmthm}, conclude the result and complete the proof.
\end{proof}
\noindent Another significant topic for consideration is the convergence of Delta through the implementation of the Euler method for two pay-off functions, Call options and Lookback barrier options. To this end, define
\begin{equation*}
	{\vartheta^n}(x)=\mathbb{E}\Big({\vartheta}_{X^n,G^n}(x)\Big):= {\mathbb{E}}\Big({\Phi}(X_T^{n,x}, G^n(X^{n,x})) \Big).
\end{equation*} 
To validate convergence, one must provide evidence that $\Delta^n \xrightarrow{n \to \infty} \Delta.$
Given the fundamental differences in the payoff structures of European call options and barrier options, it is necessary to establish the convergence of $\Delta$ through separate, tailored proofs for each type of option. 
 \subsection{European call option}
 \noindent We begin by demonstrating the convergence for European call options. We are inspired by the proof of Theorem 5.1 from \cite{higham2005convergence}. Hence, we note that for every $\epsilon$ there exists a $\delta_0$ such that $$P(K-\delta_0 < X_T^{x}< K) < \frac{\epsilon}{2}, \qquad P(K < X_T^{x}< K+\delta_0) < \frac{\epsilon}{2}.$$
 Define $$\mathcal{B}_1:=\{X_T^{x}> K\}, \qquad \mathcal{B}_2:=\{ X_T^{n,x}>K\}.$$
 \begin{align}\label{bb}
 P(\mathcal{B}_2 \cap \mathcal{B}_1^c) 
 & \leq P\Big(\vert X_T^{x}- X_T^{n,x}\vert > \delta_0)+P(\{K-\delta_0 < X_T^{x}< K \} \cap \{ X_T^{n,x}> K\}\Big) \notag\\
  & \leq C\frac{1}{\delta_0^2}(1+\vert x\vert^2) (\Delta t)+ \epsilon,
 \end{align} 
 \begin{align}\label{bbc}
P(\mathcal{B}_1 \cap \mathcal{B}_2^c) 
 & \leq P\Big(\vert X_T^{x}- X_T^{n,x}\vert > \delta_0)+P(\{K< X_T^{x}< K+\delta_0 \} \cap  \{X_T^{n,x}< K\}\Big)\notag\\
  & \leq C\frac{1}{\delta_0^2}(1+\vert x\vert^2) (\Delta t)+ \epsilon.
 \end{align} 
 From \eqref{bb}, \eqref{bbc} and Lemma \ref{lempartialxxn} we have 
\begin{align*}
\lim_{n \rightarrow \infty} \vert \Delta^n - \Delta\vert^2 & = \lim_{n \rightarrow \infty}\mathbb{E} \Big(1_{X_T^{x}>K} \frac{\partial}{\partial x} X_T^x  -  1_{X_T^{n,x}>K} \frac{\partial}{\partial x} X_T^{n,x} \Big)^2  \\& \leq \lim_{n \rightarrow \infty} \mathbb{E}\Big( \mathbf{1}_{\mathcal{B}_1^c \cap \mathcal{B}_2}\frac{\partial}{\partial x}X_T^x\Big)^2 +\lim_{n \rightarrow \infty}\mathbb{E}\Big(1_{\mathcal{B}_1 \cap \mathcal{B}_2^c }\frac{\partial}{\partial x} X_T^x \Big)^2 \\
&+   \lim_{n \rightarrow \infty}\mathbb{E} \Big(1_{X_T^{n,x}>K} (\frac{\partial}{\partial x} X_T^x -\frac{\partial}{\partial x} X_T^{n,x})^2 \Big)=0.\\
 \end{align*}
\subsection{Barrier Options} 
\noindent We broaden our analysis to encompass barrier options, demonstrating the convergence of the Euler scheme for $\Delta$ in the up-and-out call option. A similar procedure in European options can be employed to derive the conclusion.
To achieve this, we note that for every $\epsilon>0$ there exists some constant $\delta_0 >0$ such that 
$$P(\mathcal{B}_6):= P(\{\vert  G_M(X^{x})- K \vert < \delta_0\})+P(\{\vert  G_M(X^{x})- B \vert < \delta_0 \}) < \epsilon,$$
 and define the sets 
$$\mathcal{B}_5:= \{1_{K < G_M(X^{x})< B}\},  \qquad \mathcal{B}^n_5:= \{1_{K < G_M(X^{n,x})< B}\},$$
$$\mathcal{B}_6:=\{K < G_M(X^{x})< K+ \delta_0\} \cup \{B -\delta_0 < G_M(X^{x})< B\},$$
$$\mathcal{B}_7:=\{ K- \delta_0 < G_M(X^{x})< K\} \cup \{B < G_M(X^{x}) < B +\delta_0\}.$$
We can derive
\begin{align}
&\vert \Delta -\Delta^n \vert^2 \notag\\
&=\mathbb{E} \Big(\vert 1_{\mathcal{B}_5 \cap \mathcal{B}^n_5} \vert \frac{\partial}{\partial x}(G_M(X^{n,x}))-\frac{\partial}{\partial x}(G_M(X^{x}))\vert^2 \Big)\notag\\
& +  \mathbb{E}\Big(1_{\mathcal{B}_5 \cap (\mathcal{B}^n_5 )^c}\Big) \mathbb{E}\Big(\vert \frac{\partial}{\partial x}(G_M(X^{x}))\vert^2\Big)+  \mathbb{E}\Big(1_{\mathcal{B}_5^c \cap \mathcal{B}^n_5 }\Big) \mathbb{E}\Big(\vert \frac{\partial}{\partial x}(G_M(X^{n,x}))\vert^2\Big)\notag\\
& \leq \mathbb{E} \Big( 1_{\mathcal{B}_5 \cap \mathcal{B}^n_5} \sup_{0 \leq t \leq T}\vert \frac{\partial}{\partial x}X_t^{n,x}-\frac{\partial}{\partial x}X_t^{x}\vert^2 \Big)\notag\\
& +  \mathbb{E}\Big(1_{\mathcal{B}_5 \cap (\mathcal{B}^n_5 )^c}\Big) \mathbb{E}\Big(\vert \frac{\partial}{\partial x}(G_M(X^{x}))\vert^2\Big)+  \mathbb{E}\Big(1_{\mathcal{B}_5^c \cap \mathcal{B}^n_5 }\Big) \mathbb{E}\Big(\vert \frac{\partial}{\partial x}(G_M(X^{n,x}))\vert^2\Big), \label{eltaelta}
\end{align}
We will show that for enough large $n$, the events $\mathcal{B}_5 \cap (\mathcal{B}^n_5 )^c$ or $\mathcal{B}_5^c \cap \mathcal{B}^n_5$ does not happen. We proceed with our discussion of European options to result 
\begin{align*}
P( \mathcal{B}_5 \cap (\mathcal{B}^n_5)^c) &= P(\{K+\delta_0 < G_M(X^x) < B-\delta_0\} \cap (\mathcal{B}^n_5)^c)+P(\mathcal{B}_6 \cap (\mathcal{B}^n_5)^c)\\
&\leq P(\{\vert \sup_{0 \leq t \le T} X_t^x - \sup_{0 \leq t \le T} X_t^{n,x}\vert > \delta_0\} )+\epsilon\\
 &\leq \frac{1}{\delta_0^2}\mathbb{E}( \sup_{0 \leq t \le T} \vert X_t^x - X_t^{n,x}\vert^2)+\epsilon.
\end{align*}
Also, in a similar course,
\begin{align*}
P( \mathcal{B}_5^c \cap \mathcal{B}^n_5) &= P([\{ G_M(X^x) < K-\delta_0\} \cup  \{B+\delta_0 < G_M(X^x) \} ]\cap \mathcal{B}^n_5)\\
&+P(\{\mathcal{B}_7 \cap \mathcal{B}^n_5\}\\
&\leq P(\{\vert \sup_{0 \leq t \le T} X_t^x - \sup_{0 \leq t \le T} X_t^{n,x}\vert > \delta_0\} )+\epsilon\\
 &\leq \frac{1}{\delta_0^2}\mathbb{E}( \sup_{0 \leq t \le T} \vert X_t^x - X_t^{n,x}\vert^2)+\epsilon.
\end{align*}
Hence, substituting above inequalities and Corollary \ref{coro} into \eqref{eltaelta} yields there exists some positive constant $C$ such that 
\begin{align*}
\vert \Delta -\Delta^n & \vert^2\leq \mathbb{E} \Big(  \sup_{0 \leq t \le T} \vert \frac{\partial}{\partial x} X_t^x - \frac{\partial}{\partial x} X_t^{n,x}\vert^2 \Big) \\
&+ \Big(\frac{1}{\delta_0^2}\mathbb{E}( \sup_{0 \leq t \le T} \vert X_t^x - X_t^{n,x}\vert^2)+\epsilon\Big) \mathbb{E}\Big(\vert \sup_{0 \leq t \leq T}\frac{\partial}{\partial x}X_t^{n,x}\vert^2 +\vert \sup_{0 \leq t \leq T}\frac{\partial}{\partial x}X_t^{x}\vert^2\Big).
\end{align*}
Lemma \ref{lempartialxxn}, the equation \eqref{Eulerorder}, Corollary \ref{coro} and Theorem \ref{thmthm} estate that 
$$\lim_{n \rightarrow \infty} \vert \Delta -\Delta^n \vert =0.$$

\section{Numerical experiments}
\noindent To compute the Malliavin weights, we utilized a numerical method grounded in the relations derived in the previous section, specifically employing the Euler scheme due to its favorable convergence properties. The computation required simulations of \(X_t^x\), \(Y_t\), \(u(t)\), and \(D_{t,z}X_t^x\), which were performed with a simulation size of \(M = 1000\) to ensure convergence and validate the accuracy of the results. However, as illustrated in the figures, an acceptable level of mean-square error approximation was achieved with a reduced simulation size of \(M = 500\), demonstrating its efficiency. We adopted a uniform time partition, where the number of partitions \(N\) is given by \(N = \frac{T}{\Delta t}\), and small step sizes of \(\Delta t = 2^{-12}\) to align the numerical solution with the mean-field SDE with jumps.  
\noindent The primary computational difficulty arose in the context of barrier options, particularly due to the complexity in computing \(\frac{\partial}{\partial x} \max_{t} X_t^x\). Unlike European options, barrier options involve path-dependent features, significantly increasing the computational burden. To address these challenges, we explain the computational steps involved in this process in the following examples.  
\noindent We focus on a European call option and an up-and-out barrier option to illustrate the efficiency and applicability of our results in a numerical context. The European call option is a benchmark due to its simplicity and well-understood properties, while the up-and-out barrier option highlights the challenges associated with path dependence. For the European call option, after computing the Malliavin weights as derived in (\ref{wEur}), we employ the Monte Carlo simulation method to calculate the delta, as described in (\ref{Delta}). This approach allows us to validate the accuracy and convergence of numerical schemes in a straightforward setting. In the case of barrier options, the computation of Malliavin weights requires a careful evaluation of $\frac{\partial}{\partial x} \max_{t} X_t^x$, as required by Formula (\ref{W2B}).  Moreover, it is crucial to ensure that the theoretical conditions outlined in the article are satisfied within our numerical examples, as this verification step is essential to guarantee the accuracy, validity, and robustness of the obtained results. 
To demonstrate the effectiveness of our results, we compare it to the following finite difference method scheme hereunder  
\begin{align*}  
\vartheta'(x) \approx \frac{\mathbb{E}[\Phi(X_T^{x+h}, G(X^{x+h}))] - \mathbb{E}[\Phi(X_T^x, G(X^x))]}{h}, \quad h \approx 0.  
\end{align*} 
\begin{example}\label{E1}
\noindent  Consider the following mean-field SDE with jump 
\begin{align}\label{example1}
&d{X_t ^x}={\frac{a(\mathbb{E}({X_t ^x}) + 1) }{\mathbb{E} ({X_t ^x})}}X_t ^xdt + b{X_t ^x}d{W_t} + {\int_{\mathbb{R}_0}^{}{c {X_t ^x}}} {\tilde{N}}(dz,dt), \notag\\
&X_0 = x_0,
\end{align}
\begin{figure}[H]
\centering
    \includegraphics[width=14cm]{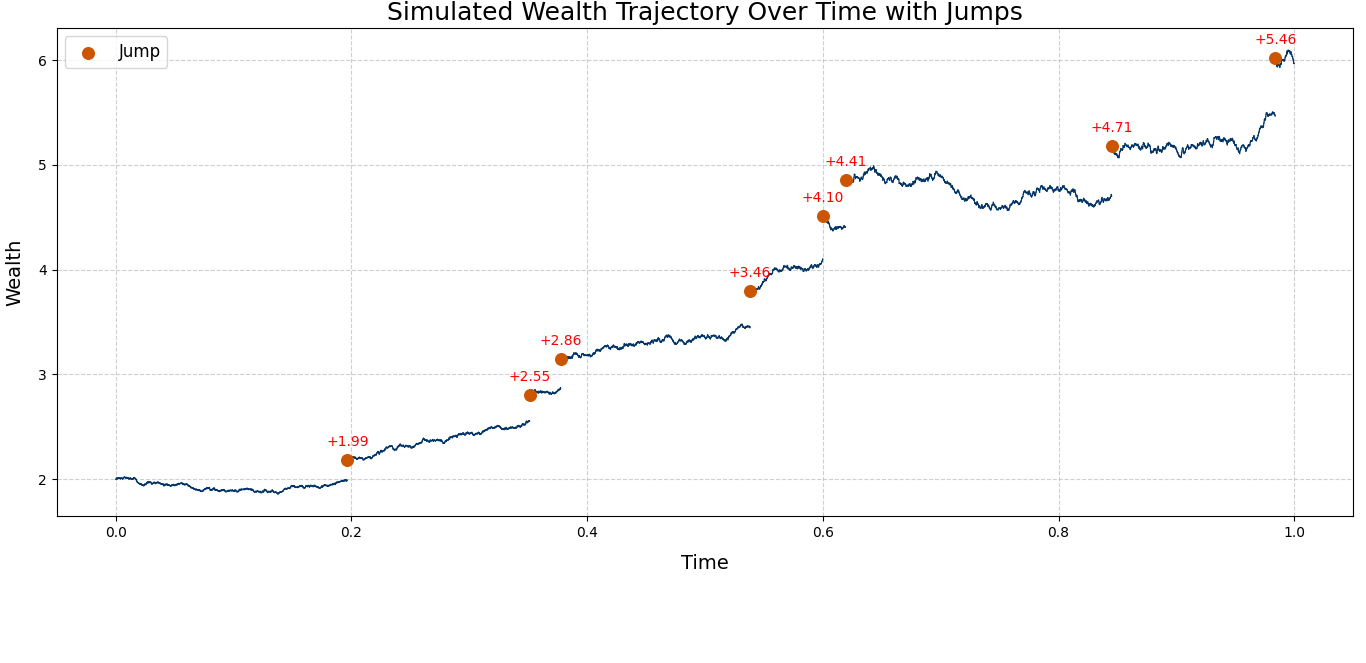}
    \caption{\scriptsize On top parameters set to $a= b= c= 1$, $\lambda = 0.1$, $T=1$, Example \ref{E1}.}\label{Figure1}
\end{figure}
\noindent Where \(a\), \(b\), and \(c\) are constants, which, for consistency, are assumed to satisfy \(a = b = c = 1\).   Assume that the jump sizes $\{Z_i, i=1, ..., N_T\}$ have the uniform distribution on $[\frac{-1}{2}, \frac{1}{2}$]. Also jumps occur on times by exponential and gamma distribution with intensity $\lambda$. A simulation of (\ref{example1}) is obtained in Figure (\ref{Figure1}) by using the Euler method with a step size of $2 ^{-24}$.  Following the drift coefficient, we derive that
\begin{align*}  
b(t, f(t)) &= \frac{0.2 (1 + f(t))}{f(t)},
\end{align*}
\noindent By performing calculations as outlined in (\ref{ft}), we arrive at the following expression
\begin{align*}
f(t) &= {(1+x)} e^{5(t+1)} - 1,
\end{align*}
\noindent consequently,
\begin{align*}
b(t, f(t)) &= \frac{1}{5} \frac{{(1+x)} e^{5(t+1)}}{-1 + {(1+x)} e^{5(t+1)}}.
\end{align*} 
\noindent In light of this, 
\begin{align*}
X_t ^{x} =
& = x\exp\Big\{\frac{1}{5}{\int_{0}^{t}{\frac{{(1+x)} e^{5(t+1)}}{-1 + {(1+x)} e^{5(t+1)}}}}\Big\} \mathcal{K}_t u(0),
\end{align*}
\noindent where $u(0) = 1,$ and here, \( \mathcal{K}_t \) is defined as
\begin{align*}  
\mathcal{K}_t &= \exp(H(t)) = \exp\left(-\frac{t}{2} + \lambda t [\ln(2) - 1] + W_t + \int_{0}^{t} \int_{\mathbb{R}_0} \ln(2) \, \tilde{N}(dz, ds)\right),  
\end{align*}  
\noindent where \( H(t) \) encapsulates the stochastic components of the system, including the Wiener process \( W_t \) and the compensated Poisson random measure \( \tilde{N}(ds, dz) \).  \\
\noindent To analyze the system further, we maximize the integral of \( b(s, f(s)) \) over the interval \([0, T]\), leading to:  

\begin{align*}  
\max_{t} \int_{0}^{t} b(s, f(s)) \, ds &=  \frac{1}{25}\max_{t \in [0,1]} \ln\left(\frac{-1 + {(1+x)e^{5(t+1)}}}{-1 +{(1+x)e^5}}\right).  
\end{align*}  

\noindent This result provides a key insight into the system's behavior over time, particularly in terms of the growth dynamics governed by \( b(t, f(t)) \).  

%
\noindent Substituting the expressions for \( b(t, f(t)) \) and \( \mathcal{K}_t \), we derive
\begin{align*}  
\frac{\partial}{\partial x} \max_{t} X_t^x &= \frac{1}{25} \Big( \max_t \mathcal{K}_t \Big) \cdot \frac{\partial}{\partial x} \Bigg( x \cdot \frac{-1 + {(1+x)} e^{5(t+1)}}{-1 +{(1+x) e^{5}}}\Bigg)\\
&= \big({\max_t \mathcal{K}_t }\big)\Big( \Big(\frac{-1 +{(1+x)} e^{10}}{-1 +{(1+x) e^{5}}}\Big)^{\frac{1}{25}}+ \frac{x}{5} \Big(\frac{-1 +{(1+x)} e^{10}}{-1 +{(1+x) e^{5}}}\Big)^{\frac{-24}{25}} \notag\\
&\cdot\Big( \frac{e^{10}(-1 + e^{5}{(1+x)}) - e^{5}(-1 + e^{10}{(1+x)})}{(-1 + e^{5}{(1+x)})^2}\Big)\Big).
\end{align*}  
\noindent Ultimately, substituting the initial value $x = 1$ into the expression yields the following result
\begin{align*}  
\frac{\partial}{\partial x} \max_{t} X_t^x = &= \Big(1 + \frac{1}{5} \cdot \frac{e^{10} - e^5}{-1 + e^{10}}\Big) \cdot \Big(\max_t \mathcal{K}_t\Big).  
\end{align*}  
\noindent The exclusive topic that we must address for greater clarity is
\begin{align*}
\mathcal{K}_n \xrightarrow{n \to \infty} \mathcal{K}_t,
\end{align*}
\noindent where $\mathcal{K}_n$ is shown the Euler scheme approximation of $\mathcal{K}_t.$ By It\^o Formula 
for $f(x) = \exp(x)$ and
\begin{align*}
  dH(t) = \big((-\frac{1}{2} + \lambda [\ln(2) - 1]\big) dt + dW_t + \int_{\mathbb{R}_0} \ln(2) \, \tilde{N}(dt, dz),  
\end{align*}
the final expression for the differential of \( \mathcal{K}_t \) is:  
\begin{align*} 
d\mathcal{K}_t = \mathcal{K}_t \left( \lambda [\ln(2) - 1] dt + dW_t + \int_{\mathbb{R}_0} \ln(2) \, \tilde{N}(dt, dz) \right).  
\end{align*}  
\noindent The convergence of the Euler scheme for $\mathcal{K}_t$ is detailed in \cite{platen2010numerical} specifically in Theorem (6.4.1).

\begin{figure}[H]
\centering
             \subfloat{\includegraphics[width=14cm]{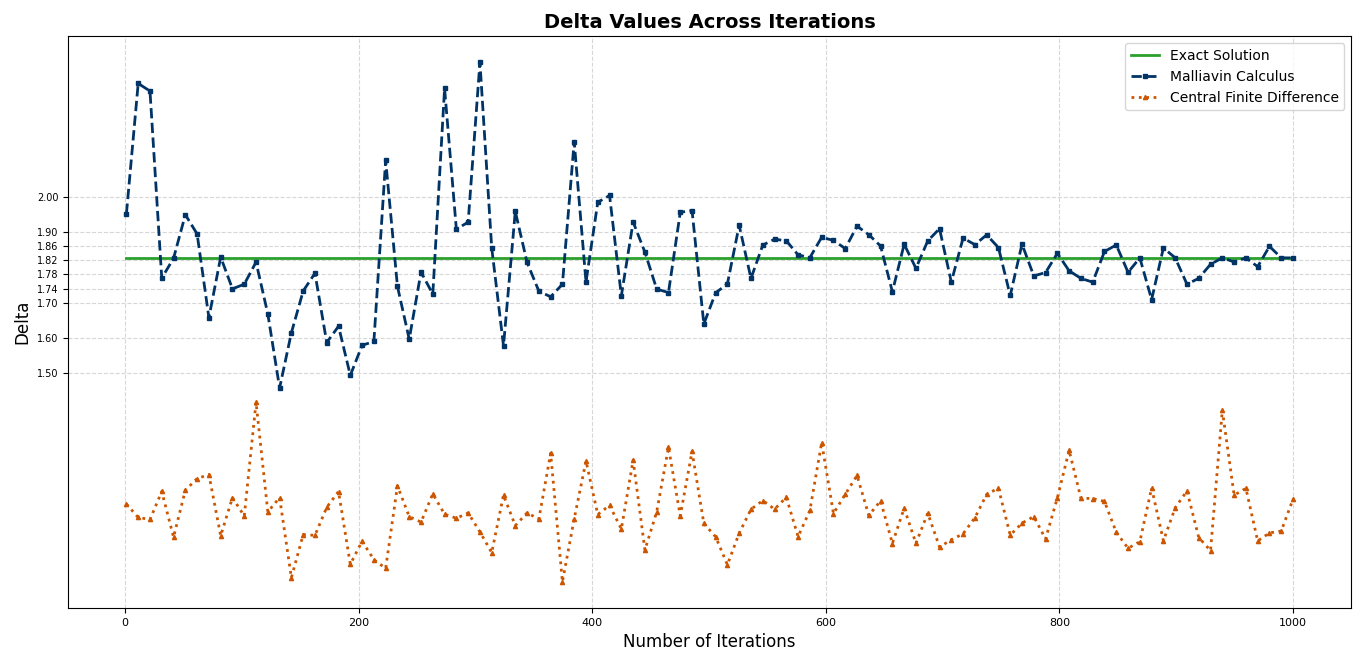}}
             \caption{\scriptsize Comparison of Malliavin calculus with Central Finite Difference method for European Call Options. On top parameters set to $a= b= c= 1$, $T=1$ and h=0.001 for Central Finite Difference, Example \ref{E1}.}\label{fig2-EF}
\end{figure}
\noindent To ensure that the examples align with the conditions established in the article, it is instructive to consider the following relationship
\begin{align*}
D_{r,z}\Big({e^{\max_{0 \leq t \leq T} H(t)}}\Big) =
{e^{\max_{t} H(t)}} \Big({{e^{\ln2}} - 1}\Big)= {e^{\max_{t} H(t)}}.
\end{align*}

\begin{figure}[H]
\centering
             \subfloat{\includegraphics[width=14cm]{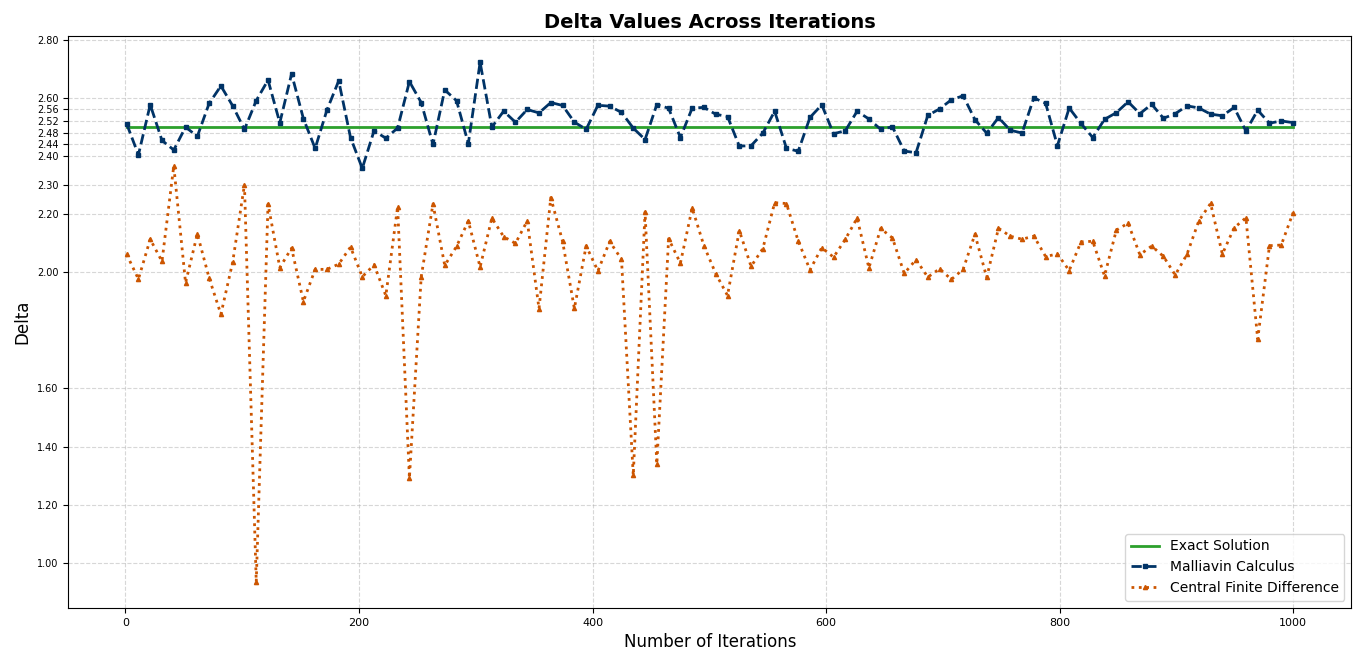}}
             \caption{\scriptsize Comparison of Malliavin calculus with Central Finite Difference method for Barrier Up and Out Options. On top parameters set to $a= b = c= 1$, $T=1$ and h=0.001 for Central Finite Difference, Example \ref{E1}.}\label{fig2-EF}
\end{figure}
\begin{figure}[H]
\centering
             \subfloat{\includegraphics[width=14cm]{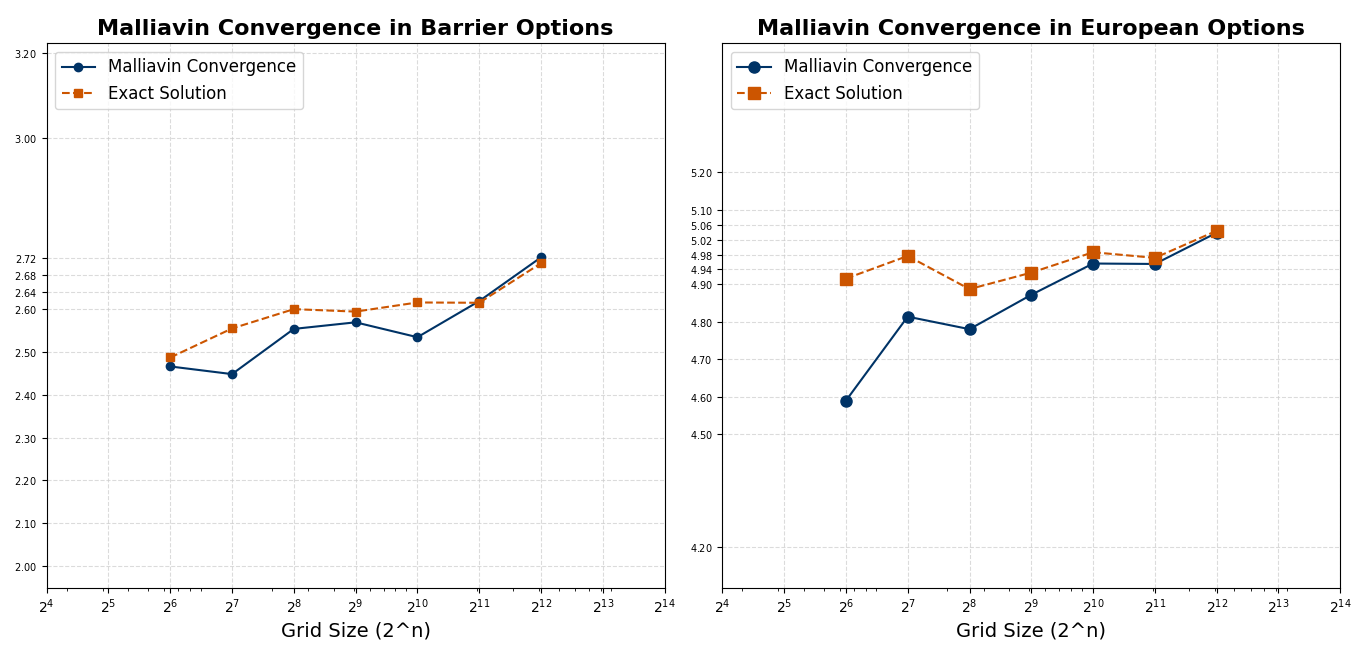}}
             \caption{\scriptsize Comparison Convergence rate of Malliavin Method with Exact solution,  Example \ref{E1}}\label{fig2-EF}
\end{figure}

\begin{example}\label{E2}
\noindent Consider the following mean-field SDE with jump
\begin{align*}  
dX_t^x &= a \mathbb{E}(X_t^x) X_t^x \, dt   
       + b X_t^x \, dW_t   
       + \int_{\mathbb{R}_0} c X_t^x \, \tilde{N}(dz, dt), \notag \\
X_0 &= x_0. 
\end{align*}
\end{example}
\begin{figure}[H]
\centering
    \includegraphics[width=14cm]{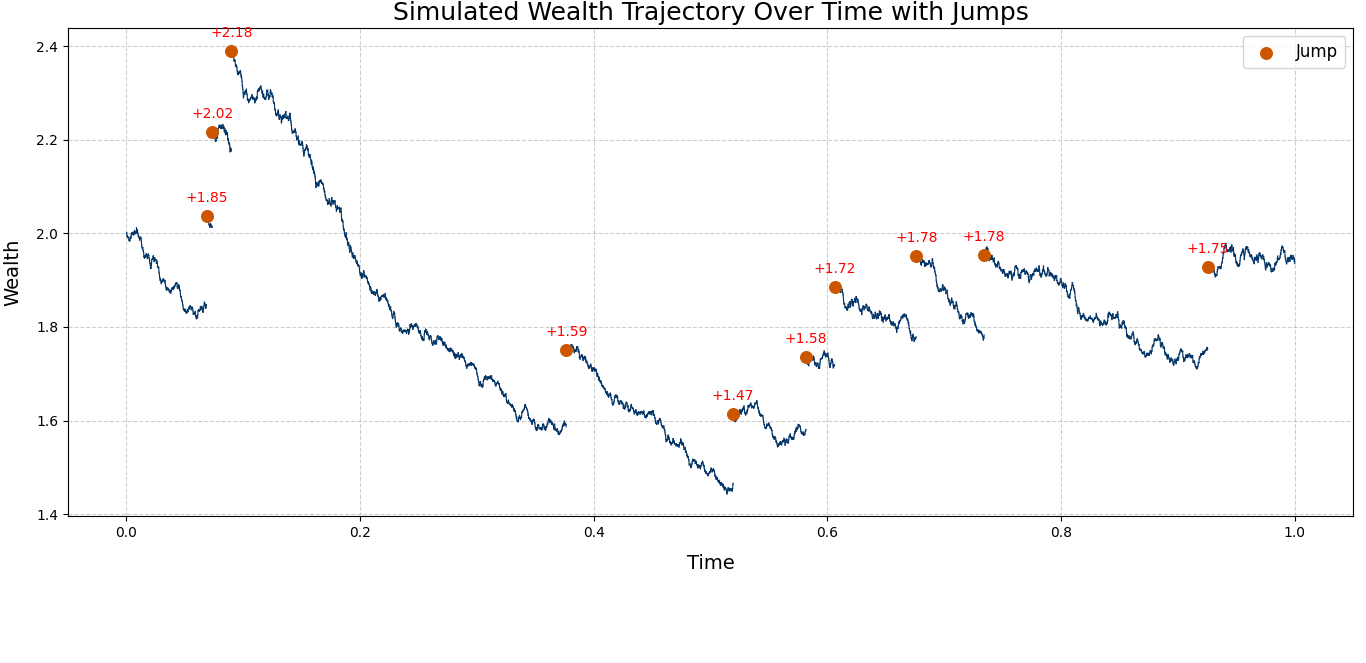}
    \caption{\scriptsize On top parameters set to $a= -1, b=c=1$, $T=1$, Example \ref{E2}.}\label{Figure2}
\end{figure}
\noindent Based on the drift coefficient, we can ascertain that $b(t, f(t)) = -{f(t)}$ and through the calculations in (\ref{ft}), we derive the expression
\begin{align*}
f(t) &= \frac{x}{xt+1}.
\end{align*}
\noindent We maximize the integral of \( b(s, f(s)) \) over the interval \([0, T]\), yielding 

\begin{align*}  
\exp\Big(\max_{t} \int_{0}^{T} b(s, f(s)) \, ds \Big)= \exp\Big(\max_{t \in [0,1]}(\ln\frac{1}{xt+1})\Big) = 1, 
\end{align*} 
\noindent therefore, we achieve
\begin{align*}
\frac{\partial}{\partial x} \max_{t} X_t^x = \frac{\partial}{\partial x} \Big({xu(0)\max_{t \in [0,1]} S_t}\Big) = 
= {\max_t \mathcal{K}_t }.
\end{align*}

\end{example}
\begin{figure}[H]
\centering
             \subfloat{\includegraphics[width=14cm]{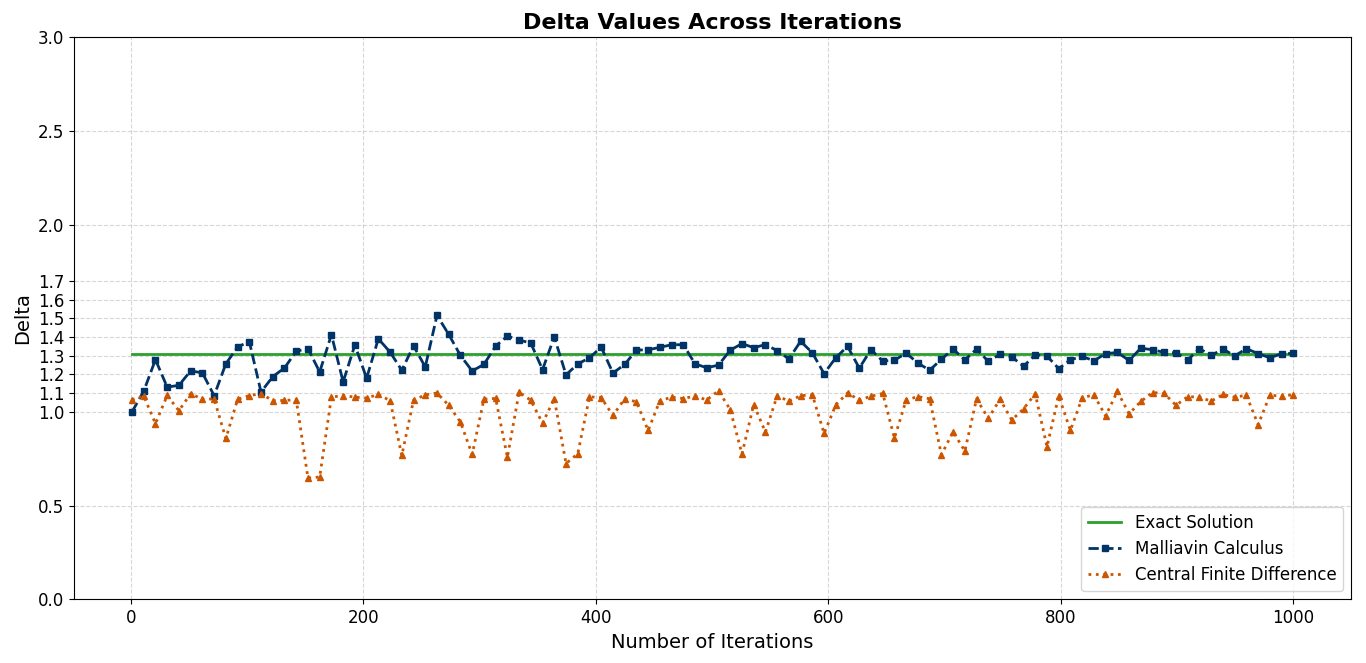}}
             \caption{\scriptsize Comparison of Malliavin calculus with Central Finite Difference method for European Call Options. On top parameters set to $a=-1,  b= c= 1$, $T=1$ and h=0.001 for Central Finite Difference, Example \ref{E2}.}\label{fig2-EF}
\end{figure}
\begin{figure}[H]
\centering
             \subfloat{\includegraphics[width=14cm]{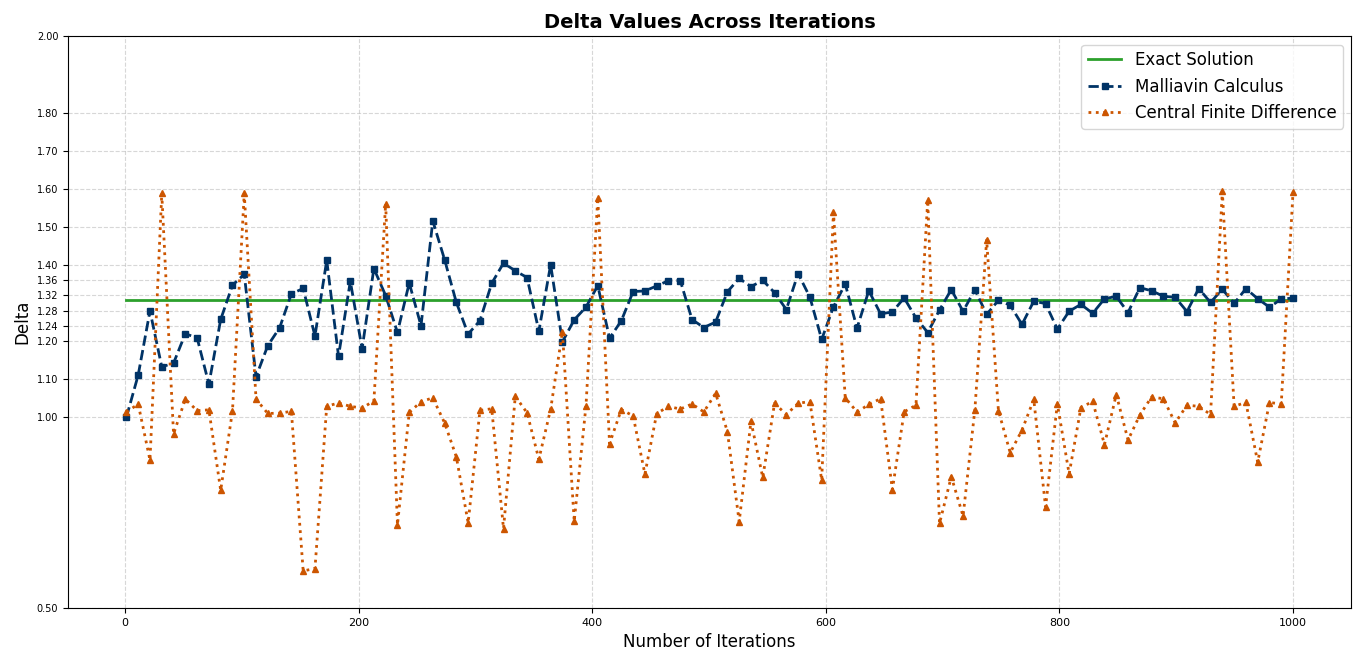}}
             \caption{\scriptsize Comparison of Malliavin calculus with Central Finite Difference method for Barrier Up and Out Options. On top parameters set to $a= -1, b = c= 1$, $T=1$ and h=0.001 for Central Finite Difference, Example \ref{E2}.}\label{fig2-EF}
\end{figure}
\begin{figure}[H]
\centering
             \subfloat{\includegraphics[width=14cm]{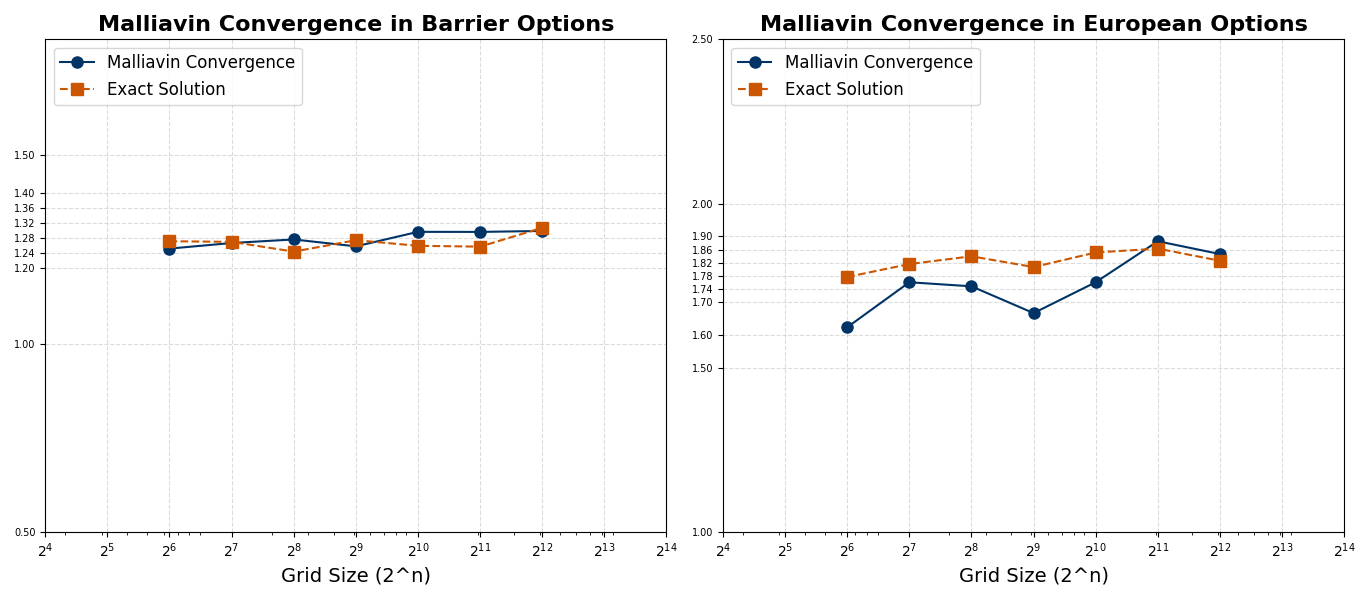}}
             \caption{\scriptsize Comparison Convergence rate of Malliavin Method with Exact solution, Example \ref{E2}}\label{fig2-EF}
\end{figure}


\newpage
 \appendix
\section{}\label{A1}
\noindent Suppose that for every $t \in [0, T]$ and $x \in L^p(P)$ the process $\tilde U_t ^{x}$, is the solution of 
\begin{align}\label{sde10}
\tilde U_t ^{x} &= x + \sum_{n=1}^{d}{\int_{0}^{t}{{\Big({\partial_1{\tilde \sigma_n}(s, S_s ^x, \pi_{S_s ^x}) \tilde U_s ^{x} + \partial_2{\tilde \sigma_n}(s, S_s ^x, \pi_{S_s ^x})\frac{\partial}{\partial x}\pi_{S_s ^x}}\Big)d{W_s ^n}}}}\notag\\
&+ \int_{0}^{t}{\int_{\mathbb{R}_0}^{}{{\Big({\partial_1{\tilde \lambda}(s, S_s ^x, z, \eta_{S_s ^x}) \tilde U_s ^{x} + \partial_3{\tilde \lambda}(s, S_s ^x, z, \eta_{S_s ^x})\frac{\partial}{\partial x}\eta_{S_s ^x}}\Big){\tilde{N} (dz, dt)}}}}.
\end{align}
We will show that this solution exists and is the stochastic flow of $S_t$.\\
Proof of Proposition 4.2:
Use Picard iteration for (\ref{sde10}), and define,
\begin{align}\label{sde4}
\tilde U_t ^{x, m} &= x + \sum_{n=1}^{d}{\int_{0}^{t}{{\Big({\partial_1{\tilde \sigma_n}(s, S_s ^x, \pi_{S_s ^x}) \tilde U_s ^{x, m-1} + \partial_2{\tilde \sigma_n}(s, S_s ^x, \pi_{S_s ^x})\frac{\partial}{\partial x}\pi_{S_s ^x}}\Big)d{W_s ^n}}}}\notag\\
&+ \int_{0}^{t}{\int_{\mathbb{R}_0}^{}{{\Big({\partial_1{\tilde \lambda}(s, S_s ^x, z, \eta_{S_s ^x}) \tilde U_s ^{x, m-1} + \partial_3{\tilde \lambda}(s, S_s ^x, z, \eta_{S_s ^x})\frac{\partial}{\partial x}\eta_{S_s ^x}}\Big){\tilde{N} (dz, dt)}}}}.
\end{align}
Applying Lemma {\ref{t1}}, we have
\begin{align*}
&\mathbb{E}\Big({\sup_{0\le s \le t} |\tilde U_s ^{x, 1} - \tilde U_s ^{x, 0}|^p}\Big) \notag\\ 
&\le \sum_{n=1}^{d}{\mathbb{E}\Big( \int_{0}^{t}{\big\{ {\partial_1{\tilde \sigma_n}(s, S_s ^x, \pi_{S_s ^x})\tilde U_s ^{x,0} + \partial_2{\tilde \sigma_n}(s, S_s ^x, \pi_{S_s ^x}) \frac{\partial}{\partial x} \pi_{S_s ^x}}\big\}^2 ds}\Big)^{p/2} } \notag\\
& + \mathbb{E} \Big( \int_{0}^{t}{\int_{\mathbb{R}_0}^{}{\big\{ {\partial_1{\tilde \lambda}(s, S_s ^x, z, \eta_{S_s ^x})\tilde U_s ^{x,0} + \partial_3{\tilde \lambda}(s, S_s ^x, z, \eta_{S_s ^x}) \frac{\partial}{\partial x} \eta_{S_s ^x}} \big\}^2}}\mu(dz)ds\Big)^{p/2}\Big\}.
\end{align*}
By Burkholder inequality can be found in [\cite{song2015regularity}, Lemma 2.3], (\ref{eq1}), (\ref{eq2}) and linear growth condition, the above last inequality is dominated by $C_p ^{\prime} |x|^p < \infty$. According to [\cite{kunita2004stochastic}, Theorem 2.11], we have further,
\begin{align*}
&\mathbb{E}\Big( \sup |\tilde U_s ^{x, m+1} - \tilde U_s ^{x, m}| ^p\Big)\notag\\
&\le \sum_{n=1}^{d}{\mathbb{E} \Big( \int_{0}^{t}{|\partial_1 {\tilde \sigma_n}(s, S_s ^x, \pi_{S_s ^x}) \tilde U_s ^{x,m} - \partial_1 {\tilde \sigma_n}(s, S_s ^x, \pi_{S_s ^x}) \tilde U_s ^{x,m-1}|^2 ds}\Big)^{p/2}}\notag\\
& + \mathbb{E} \Big( \int_{0}^{t}{\int_{\mathbb{R}_0}^{}{{|\partial_1 {\tilde \lambda}(s, S_s ^x, z, \eta_{S_s ^x}) \tilde U_s ^{x,m} - \partial_1 {\tilde \lambda}(s, S_s ^x, z, \eta_{S_s ^x}) \tilde U_s ^{x,m-1}|}^2}}\mu(dz)ds\Big)^{p/2}\notag\\
& + \mathbb{E} \Big( \int_{0}^{t}{\int_{\mathbb{R}_0}^{}{{|\partial_1 {\tilde \lambda}(s, S_s ^x, z, \eta_{S_s ^x}) \tilde U_s ^{x,m} - \partial_1 {\tilde \lambda}(s, S_s ^x, z, \eta_{S_s ^x}) \tilde U_s ^{x,m-1}|^p}\mu(dz)ds}} \Big) \Big\}.
\end{align*} 
Using (\ref{eq3}) and (\ref{eq4}) we get,
\begin{align*}
\mathbb{E} \Big(\sup_{0\le s \le t} |\tilde U_s ^{x, m+1} - \tilde U_s ^{x,m}| ^p \Big)&\le C \int_{0}^{t}{\mathbb{E} \Big( | \tilde U_s ^{x,m} - \tilde U_s ^{x, m-1}|^p\Big)ds}\notag\\
& \le C \int_{0}^{t}{\mathbb{E} \Big(\sup_{0 \le s \le t} | \tilde U_s ^{x,m} - \tilde U_s ^{x, m-1}|^p \Big)ds}.
\end{align*}
Then we get the inequality,
\begin{align}\label{eq5}
\mathbb{E}\Big( \sup_{0 \le s \le t}|\tilde U_s ^{x, m+1} &- \tilde U_s ^{x,m}| ^p \Big) \le \frac{C^m}{m!} \mathbb{E}\Big( \sup |\tilde U_s ^{x, 1} - \tilde U_s ^{x, 0}|\Big).
\end{align}
\noindent Let $m$ tend infinity in (\ref{eq5}), then
\begin{equation*}
\lim_{m \to \infty} \sup \mathbb{E} \Big( \sup_{t \in [0, T]}|\tilde U_s ^{x, m+1} - \tilde U_s ^{x,m}|^p\Big) = 0.
\end{equation*}
Therefore, there is an adapted process denoted by $\tilde U_t ^x$ such that,
\begin{equation*}
\lim_{m \to \infty} \sup \mathbb{E} \Big( \sup_{t \in [0, T]}|\tilde U_s ^{x, m} - \tilde U_s ^{x}|^p \Big) = 0.
\end{equation*}
\noindent Letting $m \to \infty$ in (\ref{sde4}), we have (\ref{sde3}). Now, we give the proof of the uniqueness. Let $\{\tilde V_t ^x\}_{t\in[0, T]}$ be a solution to (\ref{sde4}). Then we get the estimation,
\begin{equation*}
\mathbb{E}\Big( |\tilde U_t ^x - \tilde V_t ^x|^p\Big) \le C \int_{0}^{t}{\mathbb{E} \Big(|\tilde U_s ^x - \tilde V_s ^x|^p \Big)}ds,
\end{equation*}
similarly to the above argument. This implies $\mathbb{E}\Big( |\tilde U_t - \tilde V_t|^p\Big)=0.$
 \qedsymbol{}
\\
\\
Proof of Proposition \ref{flowS2}:
 From the definition of directional derivative, we have to obtain, 
\begin{equation*}
\mathbb{P}\big({\frac{\partial}{\partial x}}X_t ^x = U_t ^x, \forall t \in [0, T]\big) = 1,
\end{equation*}
according to (\ref{eq16}), it is sufficient to demonstrate,
\begin{equation*}
\mathbb{P}\big({\frac{\partial}{\partial x}}S_t ^x = \tilde U_t ^x, \forall t \in [0, T]\big) = 1.
\end{equation*}
Pursuant to (\ref{sde2}), we have
\begin{align*}
S_t ^{x+\epsilon y} = {x+\epsilon y} &+ \sum_{n=1}^{d}{\int_{0}^{t}{\tilde \sigma_n (s, S_s ^{x+\epsilon y}, \pi_{S_s ^{x+\epsilon y}})}d{W_s ^n}}\notag\\& + {\int_{0}^{t}{\int_{\mathbb{R}_0}^{}{\tilde \lambda (s, S_s ^{x+\epsilon y}, z, \eta_{S_s ^{x+\epsilon y}})}}\tilde{N}(dz, ds)}.
\end{align*}
Then,
\begin{align*}
S_t ^{x+\epsilon y} - S_t ^x &= \epsilon y + \sum_{n=1}^{d}{\int_{0}^{t}{\tilde \sigma_n (s, S_s ^{x+\epsilon y}, \pi_{S_s ^{x+\epsilon y}}) - \tilde \sigma_n (s, S_s ^x, \pi_{S_s ^x})}d{W_s ^n}} \notag\\
&+ {\int_{0}^{t}{\int_{\mathbb{R}_0}^{}{\tilde \lambda (s, S_s ^{x+\epsilon y}, z, \eta_{S_s ^{x+\epsilon y}}) - \tilde \lambda (s, S_s ^{x}, z, \eta_{S_s ^{x}})}}\tilde{N}(dz, ds)}\notag\\
& = \epsilon y + \sum_{n=1}^{d}{\int_{0}^{t}{\int_{0}^{1}{ \frac{d}{dr}\Big({\tilde \sigma_n(s, [S_s ^x + r(S_s ^{x + \epsilon y} - S_s ^x)], \pi_{S_s ^{x + \epsilon y}})}\Big)}}dr d{W_s ^n}}\notag\\
&+ \sum_{n = 1}^{d}{\int_{0}^{t}{\int_{0}^{1}{\frac{d}{dr}\Big({\tilde \sigma_n (s, S_s ^x, \pi_{S_s ^x + r(S_s ^{x+\epsilon y} - S_s ^x)})}\Big)}dr d{W_s ^n}}}\notag\\
&+ \int_{0}^{t}{\int_{\mathbb{R}_0}^{}{\int_{0}^{1}{\frac{d}{dr}\Big( {\tilde \lambda (s, [S_s ^x + r(S_s ^{x + \epsilon y} - S_s ^x)], z, \eta_{S_s ^{x + \epsilon y}})}\Big)}dr \tilde{N}(dz, ds)}}\notag\\
&+ \int_{0}^{t}{\int_{\mathbb{R}_0}^{}{\int_{0}^{1}{\frac{d}{dr}\Big({\tilde \lambda (s, S_s ^x, z, \eta_{S_s ^x +r(S_s ^{x + \epsilon y} - S_s ^x)})}\Big)}dr \tilde{N}(dz, ds)}}\notag\\
& = \epsilon y + \sum_{n=1}^{d}{\int_{0}^{t}{\alpha_{1n}(s)(S_s ^{x + \epsilon y} - S_s ^x)}d{W_s ^n}} + \sum_{n=1}^{d}{\int_{0}^{t}{\beta_{1n}(s)}dW_s ^n}\notag\\
&+ \int_{0}^{t}{\int_{\mathbb{R}_0}^{}{\alpha_2 (s)(S_s ^{x + \epsilon y} - S_s ^x)\tilde{N}(dz, ds)}} + \int_{0}^{t}{\int_{\mathbb{R}_0}^{}{\beta_2}(s)\tilde{N}(dz, ds)},
\end{align*}
where
\begin{align*}
&\alpha_{1n} = \int_{0}^{1}{\partial_1 \tilde \sigma_n(s, [S_s ^x + r(S_s ^{x + \epsilon y} - S_s ^x)], \pi_{S_s ^{x + \epsilon y}})}dr,\notag\\
&\alpha_2 = \int_{0}^{1}{\partial_1 \tilde \lambda((s, [S_s ^x + r(S_s ^{x + \epsilon y} - S_s ^x)], z, \eta_{S_s ^{x + \epsilon y}}))}dr,\notag\\
&\beta_{1n} = \int_{0}^{1}{\partial_2 \tilde \sigma_n(s, S_s ^x, \pi_{S_s ^x + r(S_s ^{x + \epsilon y} - S_s ^x)})\mathbb{E}(S_s ^{x + \epsilon y} - S_s ^x)}dr,\notag\\
&\beta_2 = \int_{0}^{1}{\partial_3 \tilde \lambda(s, S_s ^x, z, \eta_{S_s ^x + r(S_s ^{x + \epsilon y} - S_s ^x)})\mathbb{E}(S_s ^{x + \epsilon y} - S_s ^x)}dr.
\end{align*}
\noindent For any $p \ge 2$ from Lemma \ref{t1} and Assumption 4.1, it holds
\begin{align*}
\mathbb{E}&\Big( {\sup_{t \in [0, T]} |S_s ^{x + \epsilon y} - S_s ^x|^p} \Big) \notag\\
&\le C_p {\Big{\{}}|x|^p \epsilon^p 
+ \sum_{n=1}^{d}{\mathbb{E}\Big(\big({\int_{0}^{t}{|\alpha_{1n}(s)|^2 |S_s ^{x + \epsilon y} - S_s ^x|^2}}ds\big)^{p/2}\Big)} \notag\\
&+ \sum_{n=1}^{d}{\mathbb{E}\Big(\big({\int_{0}^{t}{|\beta_{1n}(s)|^2}}ds\big)^{p/2}\Big)} \notag\\
&+ \mathbb{E}\Big(\big({\int_{0}^{t}{\int_{\mathbb{R}_0}^{}{|\alpha_2(s)|^2 |S_s ^{x+\epsilon y} - S_s ^x|^2 \mu(dz)ds}}}\big)^{p/2}\Big)\notag\\
&+\mathbb{E}{\Big(\big({\int_{0}^{t}{\int_{\mathbb{R}_0}^{}{|\beta_2(s)|^2}}}\big)^{p/2}\Big)}+ \mathbb{E}\Big({\int_{0}^{t}{\int_{\mathbb{R}_0}^{}{|\alpha_2|^p |S_s ^{x + \epsilon y} - S_s ^x|^p \mu(dz)ds}}}\Big){\Big{\}}} \notag\\
& \le C_p |y|^p \epsilon^p + C_p \int_{0}^{1}{\mathbb{E}\Big({\sup_{s \in [0, T]}|S_s ^{x + \epsilon y} - S_s ^x|^p}\Big)ds}.
\end{align*}
Hence,
\begin{equation}\label{eq20}
\mathbb{E}\Big({{\sup_{t \in [0, T]}|S_t ^{x + \epsilon y} - S_t ^x|^p}}\Big) \le C_p \mathbb{E}|y|^p \epsilon^p.
\end{equation}
Observe that,
\begin{align*}
S_t ^{x+\epsilon x} - S_t ^x - \epsilon {\tilde U}_t ^x &= \sum_{n=1}^{d}{\int_{0}^{t}{\partial_1 \tilde \sigma_n(s, S_s ^x, \pi_{S_s ^x})\big[{S_s ^{x+\epsilon x} - S_s ^x - \epsilon {\tilde U}_s ^x}\big]dW_s ^n}}\notag\\
& +  \sum_{n=1}^{d}{\int_{0}^{t}{{\beta}_{1n} - \epsilon \partial_2 \tilde \sigma_n(s, S_s ^x, \pi_{S_s ^x}) {\frac{\partial}{\partial x} \pi_{S_s ^x}}dW_s ^n}}\notag\\
&  + {\int_{0}^{t}{\int_{\mathbb{R}_0}^{}{\partial_1 \tilde \lambda(s, S_s ^x, z, \eta_{S_s ^x})\big[{S_s ^{x+\epsilon x} - S_s ^x - \epsilon {\tilde U}_s ^x}\big]\tilde{N}(dz, ds)}}}\notag\\
& +{\int_{0}^{t}{\int_{\mathbb{R}_0}^{}{\beta_2 - \epsilon\partial_3 \tilde \lambda(s, S_s ^x, z, \eta_{S_s ^x}){\frac{\partial}{\partial x}\eta_{S_s ^x}}\tilde{N}(dz, ds)}}}\notag\\
& + \sum_{n=1}^{d}{\int_{0}^{t}{\big({\alpha_{1n} - \partial_1 \tilde \sigma_n(s, S_s ^x, \pi_{S_s ^x})}\big)\big({S_s ^{x+\epsilon x} - S_s ^x}\big)}dW_s ^n}\notag\\
& + \int_{0}^{t}{\int_{\mathbb{R}_0}^{}{\big({\alpha_{2} - \partial_1 \tilde \lambda(s, S_s ^x, z, \eta_{S_s ^x})}\big)\big({S_s ^{x+\epsilon x} - S_s ^x}\big)}\tilde{N}(dz, ds)}.\notag\\
\end{align*}
\noindent Then it follows from Lemma \ref{t1} and Assumption 4.1,
\begin{align*}
\mathbb{E} &\Big({\sup_{t \in [0, T]} |S_t ^{x+\epsilon x} - S_t ^x - \epsilon {\tilde U}_t ^x|^2} \Big)\notag\\
&\le C \int_{0}^{1}{\mathbb{E}\Big({\sup_{s \in [0, T]}|S_s ^{x+\epsilon x} - S_s ^x - \epsilon {\tilde U}_s ^x|^2}\Big) ds}+ C\mathbb{E}\Big({\sup_{s \in [0, T]}|S_t ^{x+\epsilon x} - S_t ^x |^4}\Big).
\end{align*}
By Granwall inequality and (\ref{eq20}),
\begin{align*}
\lim_{\epsilon \to 0}\mathbb{E}\Big({\sup_{t \in [0, T} |S_t ^{x+\epsilon x} - S_t ^x - \epsilon {\tilde U}_t ^x|}\Big)^2 &\le C \lim_{\epsilon \to 0}\mathbb{E}\Big({\sup_{s \in [0, T]}|S_s ^{x+\epsilon x} - S_s ^x|^4}\Big) =0.
\end{align*}
From the definition of directional derivative, we obtain,
\begin{equation*}
\mathbb{P}\big({\frac{\partial}{\partial x}}S_t ^x = \tilde U_t ^x, \forall t \in [0, T]\big) = 1.
\end{equation*}
The proof has been completed.
\qedsymbol{}
\end{document}